\let\@wraptoccontribs\wraptoccontribs
\newcommand*{\squareproof}{Proof}
\newenvironment{myproof}[1][\squareproof]{\begin{proof}[#1]}{\end{proof}}
\newcommand*{\triangleproof}{Proof}
\newenvironment{myprooft}[1][\triangleproof]{\begin{proof}[#1]}{\end{proof}}
\newcommand{\showcomments}{yes}
\newsavebox{\commentbox}
\newtheorem{thm}{Theorem}[section]
\newtheorem{theorem}[thm]{Theorem}
\newtheorem{corollary}[thm]{Corollary}
\newtheorem{lemma}[thm]{Lemma}
\newtheorem{claim}[thm]{Claim}
\newtheorem*{theorem*}{Theorem}
\theoremstyle{definition}
\newtheorem{definition}[thm]{Definition}
\theoremstyle{remark}
\newtheorem{notation}[thm]{Notation}
\newtheorem{remark}[thm]{Remark}
\newcommand{\nclose}[1]{\ensuremath{\langle\!\langle#1\rangle\!\rangle}}
\newcommand{\dist}{\textup{\textsf{d}}}
\newcommand{\scname}[1]{\text{\sf #1}}
\newcommand{\area}{\scname{Area}}
\newcommand{\field}[1]{\mathbb{#1}}
\newcommand{\integers}{\ensuremath{\field{Z}}}
\newcommand{\naturals}{\ensuremath{\field{N}}}
\newcommand{\reals}{\ensuremath{\field{R}}}
\title{The Cohen--Lyndon property in non-metric small-cancellation}
\author{Macarena Arenas}
\address{DPMMS, Centre for Mathematical Sciences, Wilberforce Road, Cambridge, CB3 0WB
 and Clare College, University of Cambridge, Cambridge, CB2 1TL, UK}
\email{mcr59@dpmms.cam.ac.uk}
\address{Department of Mathematics, Universidad del Pa\'is Vasco (UPV/EHU), Spain.}
\email{kduda@impan.pl}
\subjclass[2010]{20F06, 20F67, 20F65}
\keywords{Small-cancellation, Non-positive curvature, Cohen--Lyndon property}
\thanks{The author was supported by a Cambridge Trust \& Newnham College Scholarship, and by the Denman Baynes Research Fellowship at Clare College, Cambridge.}
\begin{document}

\begin{abstract}
We show that the Cohen--Lyndon property holds for 
all non-metric small-cancellation quotients. This generalises the analogous result from the metric small-cancellation setting,  and answers a question  asked by  Lyndon in 1966 and by Wall in his 1979 problem list.
\end{abstract}

\maketitle

\section{Introduction}

The goal of this paper is to prove the \emph{Cohen--Lyndon property} for non-metric  small-cancellation quotients of free groups. That is, that given a finite presentation $P=\langle S \mid R \rangle$ satisfying the $C(6)$, $C(4)-T(4)$, or $C(3)-T(6)$ condition, the normal closure $\nclose{R}$  in the free group $F(S)$ on $S$ has a basis consisting of certain conjugates of the elements of $R$. This answers  a question posed by Lyndon~\cite[pg. 222]{Lyn66} and Wall~\cite[Question B5]{Wall79}.

\begin{theorem}\label{thm:CLC6intro}
Let $P=\langle s_1, \ldots, s_n \mid r_1, \ldots, r_k \rangle$ be a $C(6)$, $C(4)-T(4)$, or $C(3)-T(6)$ presentation, and let  $N(\langle r_i\rangle)$ denote the normaliser of $\langle r_i\rangle$ in $\langle  s_1, \ldots, s_n\rangle$.  There exist full left transversals $T_i$ of  $N(\langle r_i\rangle)\nclose{r_1, \ldots, r_k}$  such that $$\nclose{r_1, \ldots, r_k}= \ast_{i \in I, t \in T_i} \langle r_i\rangle ^t.$$
\end{theorem}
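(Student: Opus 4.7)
The plan is geometric: I would work with the presentation $2$-complex $X = X_P$ and its covering space $\widehat{X}\to X$ satisfying $\pi_1(\widehat{X}) = \nclose{R}$, so that the deck group is $G=\langle S\mid R\rangle$. A choice of spanning tree of $\widehat{X}^{(1)}$ and a basepoint yields the standard edge-loop presentation of $\nclose{R}$, whose generators are in bijection with the $2$-cells of $\widehat{X}$. The $2$-cells lying above the $r_i$-cell of $X$ are parameterised by the cosets of the stabiliser of a chosen lifted $r_i$-cell in $G$; this stabiliser is the image of $N(\langle r_i\rangle)$, so the cells are indexed by a full left transversal $T_i$ of $N(\langle r_i\rangle)\nclose{R}$ in $F(S)$, and the corresponding generators of $\nclose{R}$ take the form $t r_i t^{-1}$ with $t\in T_i$.

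Since these generators evidently generate $\nclose{R}$, the theorem reduces to showing that the natural map from the free product $\ast_{i,\,t\in T_i}\langle r_i\rangle^t$ to $F(S)$ is injective. Suppose for contradiction that a nontrivial reduced word $u_1\cdots u_n$ in this free product is trivial in $F(S)$, with each $u_j = t_j r_{i_j}^{m_j} t_j^{-1}\ne 1$ and consecutive factors lying in distinct conjugates $\langle r_i\rangle^t$. Then there is a reduced van Kampen diagram $D$ over $P$ whose boundary decomposes into arcs $\partial_1,\ldots,\partial_n$, with $\partial_j$ labelled by $u_j$. One may glue along each $\partial_j$ an auxiliary diagram witnessing the equation $u_j = t_j r_{i_j}^{m_j} t_j^{-1}$, producing an enlarged reduced diagram $\overline{D}$ with null-homotopic boundary, containing an explicit $r_{i_j}^{m_j}$-disk for each $j$ together with conjugating tracks labelled by the $t_j$.

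The heart of the argument is to use the $C(6)$, $C(4)-T(4)$, or $C(3)-T(6)$ hypothesis to show that $\overline{D}$ decomposes, up to identifications at vertices and arcs, into the $n$ prescribed relator disks plus cancelling conjugating tracks. If so, the $t_j$-tracks must cancel in pairs, which forces consecutive $u_j, u_{j+1}$ to represent the same coset in $T_{i_j}=T_{i_{j+1}}$ and hence to lie in the same conjugate of $\langle r_i\rangle$, contradicting reducedness. To extract the decomposition, I would apply the Lyndon--van Kampen curvature formula, which under each of the three small-cancellation conditions forces reduced diagrams to be non-positively curved, concentrating all curvature on $\partial\overline{D}$; a non-metric Greendlinger- or ladder-type classification (in the spirit of analyses of sixtolic complexes and their $C(4)-T(4)$ and $C(3)-T(6)$ cousins) then pins down the shape of $\overline{D}$.

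The main obstacle will be carrying out this structural classification uniformly across the three cases. The $C(6)$ regime should be most amenable, admitting a direct sixtolic-style treatment; the $C(4)-T(4)$ case is likely the most delicate, as the $T(4)$ condition permits square configurations whose combinatorics must be tracked carefully at each interior vertex; and the $C(3)-T(6)$ case involves finer triangle arrangements. I expect to proceed by a uniform curvature accounting on the links of interior vertices of $\overline{D}$, distinguishing cases by the small-cancellation hypothesis in force, and then deducing in each case that the only minimal diagrams compatible with the boundary data are the prescribed disjoint unions of relator disks.
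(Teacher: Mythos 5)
Your plan defers exactly the step that constitutes the theorem. The assertion that the enlarged diagram $\overline{D}$ ``decomposes into the $n$ prescribed relator disks plus cancelling conjugating tracks'' is the Cohen--Lyndon property in diagrammatic disguise, and the mechanism you propose for it --- combinatorial Gauss--Bonnet concentrating curvature on $\partial\overline{D}$, plus a Greendlinger/ladder classification --- does not suffice in the non-metric setting. $C(6)$, $C(4)$-$T(4)$ and $C(3)$-$T(6)$ groups are only non-positively curved (they may contain $\mathbb{Z}^2$), so reduced diagrams can contain arbitrarily large flat regions: Greendlinger-type statements describe boundary features (shells, spurs, ladders) but do not ``pin down the shape'' of a diagram with your boundary data. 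Worse, $\overline{D}$ is not reduced a priori: since $u_1\cdots u_n$ is trivial in $F(S)$, the van Kampen diagram for that boundary word carries no $2$-cells at all, and once you glue on the lollipop diagrams for the words $t_j r_{i_j}^{m_j}t_j^{-1}$ the resulting diagram \emph{must} admit cancellations; the entire difficulty is to control which relator cells cancel against which during reduction and to show that complete collapse forces consecutive factors into the same coset. No argument for this is offered, and it is precisely here that the paper has to work: it proves the stronger homotopical statement (Theorem~\ref{thm:htpy}) that the Cayley graph deformation retracts onto a wedge of translated relator cycles, by induction over the ordering of Definition~\ref{def:order} on the cover by relator cycles and untethered components, supported by the contractibility and Helly lemmas (Lemmas~\ref{lem:c6cont}, \ref{clm:helly}, \ref{clm:maxints}), Lemma~\ref{lem:well-defined}, and the key Claim~\ref{clm:final}; nothing of comparable force appears in your sketch, nor is it clear your route would be uniform across the three conditions (the appendix shows the $C(3)$-$T(6)$ case needs a modified Helly property and a refined ordering).

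Two further points need repair even at the level of the setup. First, there is no cover of the presentation $2$-complex with $\pi_1=\nclose{R}$: $\pi_1(\mathcal{X}(P))=G$, and $\nclose{R}$ is a subgroup of $F(S)=\pi_1(\mathcal{X}(P)^{(1)})$; what you want is the cover $\hat B_n$ of the bouquet (the Cayley graph), whose $\pi_1$ is $\nclose{R}$, sitting inside the universal cover of $\mathcal{X}(P)$. Second, the transversals cannot be arbitrary --- for bad choices of coset representatives the conjugates $t r_i t^{-1}$ can fail even to generate $\nclose{R}$ --- so your representatives are implicitly fixed by the spanning tree/basepoint paths, and the final step (``tracks cancel in pairs, hence same coset, hence same free factor'') must be carried out relative to that specific choice; your sketch does not track this dependence.
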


The main paper contains the $C(6)$ case of Theorem~\ref{thm:CLC6intro} and the underlying strategy that is used also in the $C(4)-T(4)$ and $C(3)-T(6)$ cases. The appendix, written jointly with K. Duda, contains  the proofs for $C(4)-T(4)$ and $C(3)-T(6)$ small-cancellation presentations.

The analogue to  Theorem~\ref{thm:CLC6intro} in the metric $C'(\frac{1}{6})$ case is due to Cohen and Lyndon~\cite{CL63}. Theorem~\ref{thm:CLC6intro} recovers and generalises Cohen and Lyndon's classical result, and what is more, pushes it beyond the negatively curved setting and into the realm of non-positive curvature. Indeed, finitely presented $C'(\frac{1}{6})$ and $C(7)$ groups are hyperbolic, but while finitely presented $C(6)$,  $C(4)-T(4)$, and $C(3)-T(6)$  groups are non-positively curved by most sensible combinatorial criteria~\cite{GerstenShort90, wilton2022rational, WiseSixtolic}, they are not in general  negatively curved in any meaningful way - indeed, these groups can contain $\integers^2$ subgroups.
 
 For each $n \in \naturals$, the  \emph{metric} and  \emph{non-metric} small-cancellation conditions, denoted by  $C'(\frac{1}{n})$ and  $C(n)$ respectively, measure the overlaps (repeated subwords) between relators in a group presentation; we formulate these conditions precisely in Definition~\ref{def:sccs}. The smallest numbers for which the $C'(\frac{1}{n})$ and  $C(n)$ conditions produce a useful theory are $n=6$, and $n=7$, but there are significant differences between the properties that they are known to satisfy in each case, and between the methods that can be used to approach them. 
 
The $C'(\frac{1}{n})$ condition implies the $C(n+1)$ condition, but in general, the converse is not true: the ``purely non-metric'' $C(n)$ condition does not imply the  $C'(\frac{1}{n'})$ condition for any  choices of $n \geq 2$ and $n' \geq 2$. For instance, for $K \geq 3$, consider the  presentation
 $$P=\langle a,b,t\mid w_K:=ta^{K}t^{-1}b^{K+4} \rangle.$$  This presentation satisfies the $C(6)$ condition, but does not even satisfy  the $C'(\frac{1}{2})$  condition,   because $b^{K+3}$ is a piece and $|w_K|=2(K+3)$. Similar examples of $C(n)$--not--$C'(\frac{1}{2})$ presentations can  easily be produced for any $n\geq 6$.  

Another striking difference between $C(n)$ groups and their metric $C'(\frac{1}{6})$ cousins is that the $C'(\frac{1}{6})$  condition implies cocompact cubulability, while the question of whether  there is any $n$ for which all $C(n)$ groups are cubulated is still open in general~\cite{WiseSmallCanCube04}. 

The $C'(\frac{1}{n})$ and $C(n)$ conditions can also be coupled with the \emph{$T(n)$ condition}, described in Definition~\ref{def:T}, to get a useful theory. Particularly, the $C(4)-T(4)$ and $C(3)-T(6)$ conditions provide similar structural results to the $C(6)$ case. Examples of groups admitting $C(4)-T(4)$ small-cancellation presentations include  prime alternating link groups~\cite{Weinbaum71} and 2-dimensional right angled Artin groups. Encompassing these two classes of examples, non-positively curved square complexes satisfy the $C(4)-T(4)$ condition. In the $C(3)-T(6)$ case, a rich family of examples was constructed by Gersten and Short in~\cite{GerstenShort90}. These examples have Property (T), and are thus very different from the $C(4)-T(4)$ examples.

The Cohen--Lyndon property was introduced in~\cite{CL63}, where it was shown to hold  for one-relator presentations and $C'(\frac{1}{6})$  small-cancellation presentations. It was later established for certain presentations of Fuchsian groups by Zieschang in~\cite{Zieschang66}.  
Informally, the Cohen-Lyndon property encodes when the relators in the presentation are ``as independent as possible''. When none of the relators are proper powers, it implies that the relation module $\nclose{\mathcal{R}}/\nclose{\mathcal{R}}'$ is a sum of $|R|$  cyclic modules~\cite[10.3]{LS77}   and therefore immediately implies that such presentations have no relation gap, and that the corresponding presentation complexes are aspherical. The Cohen-Lyndon property was also used by Baumslag~\cite{Baumslag67} to obtain a partial characterisation of  Hopfian one-relator groups, and by Zieschang to study automorphisms of Fuchsian groups.

We derive the Cohen--Lyndon property from a  homotopical statement, in that we essentially produce am explicit homotopy equivalence  between the Cayley graph associated to the small-cancellation presentation and a wedge of cycles representing the relators and their translates, ranging over  left transversals as in the statement of Theorem~\ref{thm:CLC6intro}. The main technical statement is thus  Lemma~\ref{lem:c6cont}. 
This text therefore serves two purposes: it extends the results in~\cite{CL63} to the setting of non-metric small-cancellation and it introduces a topological viewpoint to proving that the Cohen--Lyndon property holds -- a viewpoint which, we hope, can be used in other settings.

\begin{remark}
While in the present paper we only concern ourselves with group presentations, we note that a version of the Cohen--Lyndon property can be defined for any pair $(G,\mathcal{H})$, or equivalently, for any quotient $G/\nclose{\mathcal{H}}$, where $G$ is an arbitrary group and $\mathcal{H}$ is a collection of subgroups of $G$.    It has been shown to hold in various settings (see~\cite{KarrassSolitarCL72, MCS86, EH87, DuncanHowie94, Sun20}), but none of these variations can be used to deduce Theorem~\ref{thm:CLC6intro}.

The results in the present work extend almost verbatim to the more general setting of graphical $C(6)$ quotients (see for instance~\cite{Gruber15}) of free groups, by replacing the bouquet $B_n$ associated to a generating set $S$ in a ``classical" group presentation with the defining graph in the graphical presentation. To avoid excessive technicalities, we have chosen not to structure our exposition around that version of the theory. In~\cite{Arenas2023pi}, we prove versions of these results for $C(9)$ cubical small-cancellation quotients (in the sense of~\cite{WiseIsraelHierarchy}); we note that the graphical and classical versions of the theory are  special cases of the cubical version.
\end{remark}

\subsection{Structure and strategy:} In Section~\ref{sec:back} we present the necessary background regarding the Cohen--Lyndon property  and small-cancellation theory. In Section~\ref{sec:main} we describe an ordering on the cycles of the Cayley graph that takes into account the structure of a certain simplicial complex associated to the presentation. A key property of this ordering is that it respects (graph-theoretical) distance with respects to a fixed ``origin''; this is proven in Lemma~\ref{lem:well-defined}. The main technical result is Lemma~\ref{clm:contractibleinduction} which allows us  to inductively ``rebuild'' the Cayley graph in a manner consistent with the ordering. These results, together with a couple more lemmas, are then assembled to deduce  Theorem~\ref{thm:CLC6intro}. In the Appendix (Section~\ref{appendix}), we explain how to adapt the results in the previous sections to the $C(4)-T(4)$ and $C(3)-T(6)$ cases.

\subsection*{Acknowledgements} I am grateful to  MurphyKate Montee, Nansen Petrosyan, and the anonymous referee for their careful reading of the proofs contained herein, which  led to many improvements and clarifications. I am also grateful to Henry Wilton and Andrei Jaikin-Zapirain for their suggestions, 
to Daniel Wise for pointing out a reference, and to Sami Douba for providing stylistic guidance.

\section{Background}\label{sec:back}

\subsection{Small-cancellation notions}\label{subsec:c6cl}

We adopt a topological, rather than combinatorial, viewpoint for defining classical small-cancellation theory. This is mostly a matter of convenience -- the topological viewpoint is more suitable for the method or our proof, and generalises more naturally to other versions of the theory. We emphasise that in the classical setting, both viewpoints are equivalent.

Let $P=\langle S \mid R \rangle$ be a presentation for a group $G$, and let $\mathcal{X}(P)$ denote its presentation complex. This is a $2$-complex that has a single vertex, an edge for each $s \in S$, and a $2$-cell for each $r \in R$, so that $\pi_1\mathcal{X}(P)=G$. 
The Cayley graph $Cay(G,S)$ is the $1$-skeleton of the universal cover $\widetilde{\mathcal{X}(P)}:=\widetilde{\mathcal{X}}(P)$. The definitions below are stated in terms of arbitrary $2$-complexes, but the reader may take $X=\widetilde{\mathcal{X}}(P)$ for the remainder of this section.

A map $f: X \longrightarrow Y$ between 2-complexes is \emph{combinatorial} if it maps open cells homeomorphically to open cells.
A complex is \emph{combinatorial} if all attaching maps are combinatorial (possibly after subdividing).

\begin{definition}[Pieces]
Let $X$ be a combinatorial $2$-complex.
A non-trivial combinatorial path $p \rightarrow X$ is a \emph{piece} if there are $2$-cells $C_1, C_2$ such
that $p\rightarrow X$ factors as $p \rightarrow \partial C_1 \rightarrow X$ and $p \rightarrow \partial C_2 \rightarrow X$ but there does not exist
a homeomorphism $\partial C_1 \rightarrow \partial C_2$ such that the following diagram commutes
\[\begin{tikzcd}
p \arrow[d] \arrow[r]  & \partial C_1 \arrow[d] \arrow[ld] \\
\partial C_2 \arrow[r] & X                                
\end{tikzcd}\]
\end{definition}

\begin{definition}[Disc diagram]\label{def:dd} A \emph{disc diagram} $D$ is a compact contractible combinatorial 2-complex, together with an embedding $D \hookrightarrow S^2$ that induces a cell structure on $S^2$. Viewing the sphere as the 1-point compactification of $\reals^2$, this cellular structure consists of the 2-cells of $D$ together with an additional 2-cell containing the point at infinity. The \emph{boundary path} $\partial D$ is the attaching map of the 2-cell at infinity.  A \emph{disc diagram in a complex X} is a combinatorial map $D \rightarrow X$. The \emph{area} of a disc diagram $D$ is the number of $2$-cells in $D$.  
\end{definition}

\begin{remark}
A \emph{spherical diagram} $S$ is defined similarly to a disc diagram, except that $S$ is assumed  to be simply-connected rather than contractible. Note that all the definitions concerning disc diagrams, such as the notion of reducibility defined below, also make sense for spherical diagrams.
\end{remark}

A disc diagram $D$ might map to $X$ in an `ineffective' way: it might, for instance, be quite far from an immersion. Sometimes it is possible to replace a given diagram with a simpler diagram $D'$ having the same boundary path as $D$, as we now explain.

\begin{definition}\label{def:cancellablecell}
A \emph{cancellable pair} in $D$ is a pair of $2$-cells $C_1, C_2$ meeting along a path $e$ such that the following diagram commutes:
 
 \[\begin{tikzcd}
                         & e \arrow[ld] \arrow[rd] &                          \\
\partial C_1 \arrow[rd] \arrow[rr] &                         & \partial C_2 \arrow[ld] \\
                         & X                       &                         
\end{tikzcd}\]

 A cancellable pair leads to a new disc diagram by removing $e\cup Int(C_1)\cup Int(C_2)$ and then glueing together the paths $\partial C_1-e$ and $\partial C_2-e$. This procedure results in a diagram $D'$ with $\area(D')=\area(D)-2$ and $\partial D'=\partial D$.
A diagram is \emph{reduced} if it has no cancellable pairs.
\end{definition}

\begin{definition}[Annular diagrams and collared diagrams]
An \emph{annular diagram} $A$ is a compact combinatorial 2-complex homotopy equivalent to $S^1$, together with an embedding $A \hookrightarrow S^2$, which induces a cellular structure on $S^2$. The \emph{boundary paths} $\partial_{in}A$ and $\partial_{out}A$ of $A$ are the attaching maps of the two 2-cells in this cellulation of $S^2$ that do not correspond to cells of $A$.
An \emph{annular diagram in a complex X} is a combinatorial map $A \rightarrow X$.
A disc diagram $D\rightarrow X$ is \emph{collared} by an annular diagram $A \rightarrow X$ if $\partial D=\partial_{in} A$. 
\end{definition}

An \emph{arc} in a diagram is a path
whose internal vertices have valence $2$ and whose initial and terminal vertices have valence $\geq 2$. 
A \emph{boundary arc} is an arc that lies entirely in $\partial D$.

\begin{definition}[Small-cancellation conditions]\label{def:sccs}
A complex $X$ satisfies the $C(n)$ condition if for every reduced disc diagram $D \rightarrow X$, the boundary path of each $2$-cell
in $D$ either contains a non-trivial boundary arc, or is not the concatenation of less than
 $n$ pieces.
The complex $X$ satisfies the $C'(\frac{1}{n})$ condition if for each $2$-cell $R \rightarrow X$, and
each piece $p \rightarrow X$ which factors as $p \rightarrow R \rightarrow X$, then $|p| < \frac{1}{n}|\partial R|$.
\end{definition}

When a complex $X$ satisfies sufficiently good small-cancellation conditions, then it is possible to classify the reduced disc diagrams $D \rightarrow X$ in terms of a few simple behaviours exhibited by their cells.

\begin{figure}
\centerline{\includegraphics[scale=0.53]{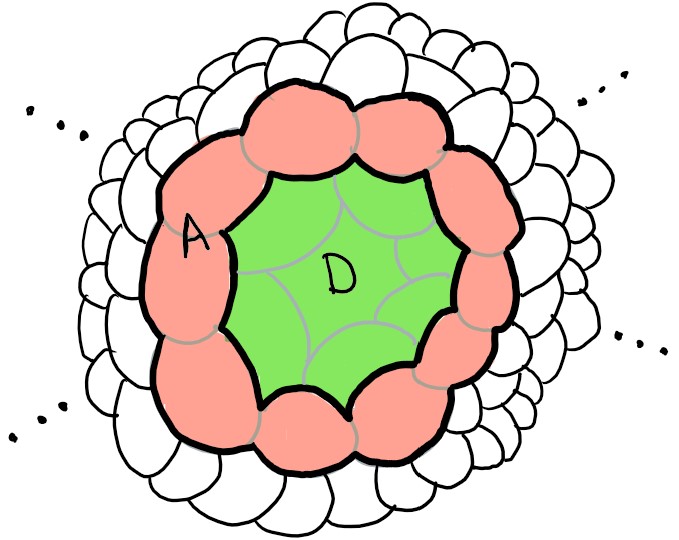}}
\caption{An annular diagram collaring a disc diagram in a $C(6)$ complex.}
\label{fig:annulardisc}
\end{figure}

\begin{definition}[Ladders, shells, and spurs]
A disc diagram $L$ is a \emph{ladder} if it is the union
of a sequence of closed $1$-cells and $2$-cells $C_1,\ldots,C_n$, such that for $1<j<n$,
there are exactly two components in $L - C_j$ , and exactly one component in
$L - C_1$ and $L - C_n$. Moreover, if $C_i$  is a $1$-cell  then it is not contained in any
other $C_j$. 

A \emph{shell} of $D$ is a 2-cell $C \rightarrow D$
whose boundary path $\partial C \rightarrow D$ is a concatenation $qp_1 \cdots p_k$ for some $k \leq 3$
where $q$ is a boundary arc in $D$ and $p_1, \ldots , p_k$ are non-trivial pieces in the interior of $D$.
The arc $q$ is the \emph{outerpath} of $C$ and the concatenation $p_1 \cdots p_k$ is the \emph{innerpath} of $C$. 
A \emph{spur} is a vertex of degree $1$ on $\partial D$.
\end{definition}

We now state the two fundamental results of small-cancellation theory. Proofs can be found in~\cite{Lyn66,McCammondWiseFanLadder}, for instance.

\begin{theorem}[Greendlinger's Lemma]\label{thm:greendlingerc6}
Let $X$ be a $C(6)$ complex and $D \rightarrow X$ be a reduced disc diagram, then either
\begin{enumerate}
\item $D$ is a single cell,
\item $D$ is a ladder,
\item $D$ has at least three shells and/or spurs.
\end{enumerate}
\end{theorem}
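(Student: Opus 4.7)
The plan is to prove Greendlinger's Lemma via a combinatorial Gauss--Bonnet argument, using the disc Euler characteristic $\chi(D) = 1$ to force positive curvature onto the boundary in a controlled way.

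First I would fix a corner-angle assignment: at a corner of a $2$-cell $F$ at a vertex $v$, assign angle $2\pi / d(v)$, where $d(v)$ is the valence of $v$ in $D$. With this assignment, the combinatorial curvature $\kappa(v) := 2\pi - \sum(\text{corner angles at } v)$ vanishes at every interior vertex of valence $\geq 3$, while a boundary vertex has $\kappa(v) \leq \pi$, with equality precisely at spurs. For a $2$-cell $F$, defining $\kappa(F)$ as the usual angle defect, the $C(6)$ condition guarantees that the boundary of an interior $F$ is subdivided by interior vertices of valence $\geq 3$ into at least six arcs --- each projecting to a piece of $X$ --- so the sum of corner angles is at most $2\pi$ and hence $\kappa(F) \leq 0$. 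Combinatorial Gauss--Bonnet then gives $\sum_v \kappa(v) + \sum_F \kappa(F) = 2\pi$, so all $2\pi$ of positive curvature must originate at boundary vertices or at $2$-cells meeting $\partial D$.

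Next I would catalogue which boundary features carry that curvature. Spurs contribute $\pi$ each, and a $2$-cell meeting $\partial D$ along an arc $q$ has $\kappa(F) > 0$ precisely when the complementary portion of $\partial F$ decomposes into at most three pieces --- that is, exactly when $F$ is a shell with outerpath $q$. Cells meeting $\partial D$ whose inner complement involves four or more pieces still satisfy $\kappa(F) \leq 0$ by the $C(6)$ bound. Thus the positive curvature of $D$ is confined entirely to spurs and shells.

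Finally, I would tally the contributions. If $D$ has fewer than three shells and spurs in total, then the available positive curvature cannot reach $2\pi$ unless the $C(6)$ bound is saturated at each interior $2$-cell. Saturation would force every interior cell to meet $\partial D$ along exactly two arcs and to sit between exactly two neighbouring $2$-cells --- which is precisely the combinatorial structure of a ladder. The main obstacle I foresee is this last rigidity step: carefully verifying that saturation everywhere forces a linear-chain structure, ruling out any subtle branching or non-ladder configurations that might formally satisfy the curvature equalities. I would handle this inductively on $\area(D)$, peeling off a terminal shell and reapplying the classification to the smaller (still reduced) diagram, with care to ensure that the reduction step does not introduce cancellable pairs.
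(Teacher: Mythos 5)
The paper does not actually prove this statement; it quotes Greendlinger's Lemma as background and cites Lyndon and McCammond--Wise for proofs, so there is no internal argument to compare yours against. Judged on its own terms, your Gauss--Bonnet plan has a genuine gap at its central step: with the angle assignment $2\pi/d(v)$, it is \emph{not} true that the positively curved $2$-cells are precisely the shells. A boundary $2$-cell with a single outerpath whose innerpath is a concatenation of \emph{four} pieces has at most five corners at vertices of valence $\geq 3$, each contributing at least $\pi/3$ to the defect, so its curvature can be as large as $2\pi-5\cdot\tfrac{\pi}{3}=\tfrac{\pi}{3}>0$ (this already happens in portions of the hexagonal grid, where all relevant vertices have valence $3$). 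Likewise a cell meeting $\partial D$ in \emph{two} boundary arcs --- a ladder rung --- has only four such corners and can carry curvature up to $2\pi/3$, even though it is neither a shell nor a spur. So your catalogue ``positive curvature lives only on spurs and shells (innerpath $\leq 3$ pieces)'' fails, and the final tally does not force three shells/spurs; Gauss--Bonnet with this weighting only localises positive curvature to spurs, rungs, and cells with innerpath of at most four pieces, which is strictly weaker than the theorem. (There are also smaller slips: the uniform formula $\kappa(v)=2\pi-\sum\alpha$ gives spurs curvature $2\pi$, not $\pi$ --- you need the boundary/link correction --- and for interior cells the correct criterion is $\sum(\pi-\alpha_i)\geq 2\pi$, not ``angle sum at most $2\pi$''.)

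The second gap is the rigidity/induction step, which you flag but underestimate. Deducing ``fewer than three shells/spurs implies ladder'' is essentially the Ladder Theorem, and it does not follow from saturation of curvature inequalities alone, precisely because non-shell cells (rungs, $4$-piece-innerpath cells) can be positively curved. Moreover, the proposed induction of peeling off a terminal shell runs into the standard difficulty: after removing a shell $S$, a cell adjacent to $S$ may become a shell of the smaller diagram (its outerpath now includes the exposed innerpath of $S$) without being a shell of $D$, so the count of shells/spurs of $D-S$ does not transfer back to $D$ without substantial extra bookkeeping. Controlling exactly this transfer is the content of the fans-and-ladders analysis of McCammond--Wise (or Strebel's classification of reduced $C(6)$ diagrams), which is the route the paper points to; your sketch would need either that machinery or a genuinely sharper weighting argument to close these two gaps.
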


\begin{theorem}[The Ladder Theorem]\label{thm:ladderc6}
Let $X$ be a $C(6)$ complex and $D \rightarrow X$ be a reduced disc diagram. If $D$ has exactly $2$ shells or spurs, then $D$ is a ladder.
\end{theorem}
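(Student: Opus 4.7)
The plan is to extract Theorem~\ref{thm:ladderc6} as a direct structural consequence of Greendlinger's Lemma, essentially by process of elimination. Let $D\to X$ be a reduced disc diagram with exactly two shells and/or spurs. Since Greendlinger's Lemma (Theorem~\ref{thm:greendlingerc6}) gives three exhaustive possibilities for such a $D$, I would argue that cases (1) and (3) are incompatible with the hypothesis, so we must be in case (2), i.e.\ $D$ is a ladder.

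First, I would dispose of the case where $D$ is a single $2$-cell. In that situation the whole boundary $\partial D$ is a single boundary arc with no pieces on its interior, so $D$ contributes a single shell (with empty innerpath) and has no spurs, giving a count of $1$, not $2$. Any degenerate subcases (such as $D$ being a single vertex, a single edge, or a tree of arcs) can be handled separately: a tree collapses to spurs only and a one-vertex or one-edge diagram carries no shell at all; in neither case does one reach exactly two shells-or-spurs in a way compatible with being reduced and non-trivial. Second, the hypothesis directly rules out case (3), which requires at least three shells and/or spurs. The only remaining alternative from Greendlinger's trichotomy is case (2), so $D$ is a ladder.

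For completeness I would include a compatibility check in the reverse direction, verifying that a ladder $C_1,\dots,C_n$ with $n\geq 2$ really does have exactly two shells or spurs, namely the two end cells $C_1$ and $C_n$ (one of which may be a spur if it is a $1$-cell rather than a $2$-cell). Intermediate cells $C_j$ with $1<j<n$ have their boundary split by the two arcs $\partial C_j \cap C_{j-1}$ and $\partial C_j \cap C_{j+1}$ into two boundary arcs separated by pieces, so no such cell is a shell in the sense of the definition; this closes the loop and confirms that the ladder case is consistent with the hypothesis.

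The main subtlety lies in the bookkeeping of degenerate configurations, particularly the $n=1$ ladder (a single $2$-cell, which has only one shell), the presence of vertices of valence $1$ on $\partial D$ inside end cells (so an end ``shell'' may coincide with a spur), and ensuring that ``exactly two'' is parsed as counting shells and spurs together. Once these conventions are pinned down, the proof is essentially an exclusion argument and does not require any additional small-cancellation input beyond Greendlinger's Lemma itself.
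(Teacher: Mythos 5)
Your argument is not comparable to anything in the paper itself: the paper does not prove Theorem~\ref{thm:ladderc6} at all, but quotes it (together with Greendlinger's Lemma) as standard background, with proofs deferred to \cite{Lyn66,McCammondWiseFanLadder}. Taken on its own terms, your exclusion argument is logically sound \emph{relative to the formulation of Theorem~\ref{thm:greendlingerc6} given in this paper}: since that statement already lists ``$D$ is a ladder'' as one of the three alternatives, a diagram with exactly two shells or spurs cannot fall under alternative (3), and a single $2$-cell accounts for only one shell and no spurs, so alternative (2) is all that remains. Two caveats. First, your handling of the degenerate cases contains a slip: a diagram consisting of a single edge, or more generally of an arc of edges, \emph{does} have exactly two spurs (its endpoints are valence-$1$ boundary vertices), so it cannot be dismissed as ``not reaching exactly two shells-or-spurs''; the correct resolution is that such a diagram is itself a (degenerate) ladder, being a sequence of closed $1$-cells in the sense of the paper's definition, and likewise a tree with exactly two spurs is a path and hence a ladder. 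Second, you should be aware that the derivation has little independent content: it succeeds only because the version of Greendlinger's Lemma stated here has the ladder alternative built in. In the cited sources the two statements are siblings, both extracted from a finer classification of reduced diagrams (via fans and ladders), and weaker standard formulations of Greendlinger's Lemma (asserting only the existence of shells or spurs) would not suffice; so your proof should be presented as a formal deduction from the particular statement of Theorem~\ref{thm:greendlingerc6} adopted in this paper, not as an independent proof of the Ladder Theorem.
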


We say a diagram $D$ in $X$ has \emph{minimal area} if it has minimal area amongst all diagrams $D' \rightarrow X$ with $\partial D=\partial D'$. Note that a minimal area disc diagram must be reduced, but that the converse is not necessarily true.

\section{Main Theorem}\label{sec:main}

We start with the following remark, which we state and prove in a way that is tailored exactly to our applications, and which is known to experts in other similar settings (see for instance~\cite[5.6+5.7]{WiseIsraelHierarchy}):

\begin{lemma}\label{lem:c6cont}
Let $X$ be a simply-connected $C(6)$ small-cancellation complex. Let $C_1,C_2$ be 2-cells of $X$. Then  either $\partial C_1 =\partial C_2$, or $ C_1\cap  C_2=\emptyset$, or $ C_1\cap  C_2$ is contractible.
\end{lemma}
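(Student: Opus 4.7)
Since distinct $2$-cells of a CW-complex have disjoint interiors, $C_1 \cap C_2 \subseteq \partial C_1 \cap \partial C_2$, a subcomplex of the circle $\partial C_1$. Under the hypothesis $\partial C_1 \neq \partial C_2$, this is a proper subcomplex, hence a disjoint union of isolated vertices and arcs, and is contractible if and only if it is connected. The plan is therefore to rule out the case in which $C_1 \cap C_2$ is non-empty and disconnected.

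Assume for contradiction that $C_1 \cap C_2$ has at least two components. Among all pairs of components $K_1, K_2$ and all arcs $\alpha \subset \partial C_1$, $\beta \subset \partial C_2$ between them, pick a ``minimal bigon'' choice: by a standard innermost argument the interiors of $\alpha, \beta$ are then disjoint from $C_1 \cap C_2$, and in particular from $C_2$ and $C_1$ respectively. By simple connectivity of $X$, the loop $\alpha \beta^{-1}$ bounds a minimal-area disc diagram $D \to X$, which is automatically reduced; the innermost choice forces $\alpha \neq \beta$ and therefore $|D| \geq 1$. Form the enlarged diagram $\tilde D := D \cup_\alpha C_1 \cup_\beta C_2$, a disc diagram with boundary $\gamma \cdot \delta^{-1}$, where $\gamma = \overline{\partial C_1 \setminus \alpha}$ and $\delta = \overline{\partial C_2 \setminus \beta}$. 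The key technical check is that $\tilde D$ is reduced: a cancellable pair inside $D$ contradicts the minimality of $D$; a cancellable pair $(C_1, C_2)$ would force $\partial C_1 = \partial C_2$; and cancellation of some $C_i$ with a cell $C' \subset D$ can be absorbed into a modified attaching of $C_i$, producing a smaller disc diagram bounded by $\alpha \beta^{-1}$ and again contradicting minimality.

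Greendlinger's Lemma applied to the reduced diagram $\tilde D$ (of area $\geq 3$) gives three options: single cell (impossible), ladder, or $\geq 3$ shells/spurs. The cells $C_1, C_2$ are already shells of $\tilde D$, with outerpaths $\gamma$ and $\delta$ together covering all of $\partial \tilde D$. In the $\geq 3$ shells/spurs case, any third shell $E \neq C_1, C_2$ must have its outerpath entirely inside $\gamma$ or inside $\delta$, so $E$ shares an arc with $C_1$ or $C_2$ in $X$; inducting on $|\tilde D|$ with the lemma itself as the inductive hypothesis, one derives either a violation of reducedness of $\tilde D$ or a strictly smaller bigon between $C_1$ and $C_2$, contradicting the minimal-bigon choice. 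In the ladder case $C_1 = E_1, E_2, \ldots, E_m = C_2$ with $m \geq 3$, the shared arcs satisfy $e_1 = \alpha$ and $e_{m-1} = \beta$, and the intermediate cells meet both $\gamma \subset \partial C_1$ and $\delta \subset \partial C_2$; combining the $C(6)$ piece-count bounds on these intermediate cells with the innermost hypothesis forces the ladder to degenerate, again contradicting $m \geq 3$. The principal obstacle I foresee is making the reducibility of $\tilde D$ and the ladder-case contradiction fully precise: both rely on carefully tracking how the innermost property propagates through cell replacements and along ladder steps, and the induction on area must be set up so that the full statement of the lemma is available as the inductive hypothesis.
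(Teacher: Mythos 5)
Your construction is in spirit the same as the paper's (span a minimal-area disc diagram $D$ between an arc $\alpha\subset\partial C_1$ and an arc $\beta\subset\partial C_2$ joining two components of $C_1\cap C_2$, enlarge by the two cells, and invoke Greendlinger/the Ladder Theorem), but the proof as written has a genuine gap: neither of the two Greendlinger cases is actually closed. You defer both the ladder case and the $\geq 3$ shells case to an induction ``on $|\tilde D|$ with the lemma itself as inductive hypothesis'' that is never set up, and you flag this yourself as the unresolved obstacle. Worse, the structural claims you make about $\tilde D=D\cup_\alpha C_1\cup_\beta C_2$ are not justified and are partly false: once $C_1$ is glued along $\alpha$ and $C_2$ along $\beta$, every edge of $\partial D$ becomes interior to $\tilde D$, so a cell of $D$ meets $\partial\tilde D$ in at most the two vertices $a,b$; hence no cell of $D$ can be a shell of $\tilde D$ at all (a shell needs a nontrivial boundary arc), contradicting your claim that a third shell shares an arc with $C_1$ or $C_2$, and intermediate cells of a putative ladder cannot ``meet both $\gamma$ and $\delta$''. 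Also $C_1,C_2$ themselves need not be shells of $\tilde D$, since their innerpaths $\alpha,\beta$ may decompose into arbitrarily many pieces, and the simultaneous innermost choice (interiors of both $\alpha$ and $\beta$ disjoint from $C_1\cap C_2$) is not automatic, because the components of $C_1\cap C_2$ can occur in different cyclic orders on $\partial C_1$ and on $\partial C_2$.

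The paper closes exactly these cases without any induction, by a different minimization: choose vertices $a,b$ in distinct components of $C_1\cap C_2$ minimising the graph distance $\dist(a,b)$, choose locally geodesic paths $\alpha\rightarrow C_1$, $\beta\rightarrow C_2$ between them and a minimal-area diagram $D$ with $\partial D=\alpha\beta^{-1}$ (arranging $C_1,C_2\not\subset D$), and then apply Greendlinger's Lemma to $D$ itself rather than to the enlarged diagram. Any shell of $D$ has its outerpath on $\alpha\cup\beta\subset \partial C_1\cup\partial C_2$, hence the outerpath is a concatenation of at most two pieces and the whole boundary of the shell of at most five, contradicting $C(6)$; spurs in the interiors of $\alpha$ or $\beta$ contradict local geodesy, and spurs at the endpoints yield a strictly closer pair $a',b'$ in the same components, contradicting the distance-minimal choice. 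To repair your version you would need either to carry out the missing induction in full (including a correct description of which cells of $\tilde D$ can be shells or spurs) or to switch to a minimization of this kind; as it stands the argument is incomplete.
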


\begin{proof}
Assume that $C_1 \cap C_2\neq \emptyset$. To show that $C_1 \cap C_2$ is connected, we proceed by contradiction.

For vertices $v,v' \in X^{(1)}$,  let $\dist(v,v')$ denote the usual graph metric, i.e.,  $\dist(v,v')$ is the least number of edges in a path connecting $v$ and $v'$ in $X$ (note that such a path exists because $X$ is simply connected). Let $a,b $ be vertices of $C_1 \cap C_2$ in distinct connected components satisfying that $\dist(a,b)$ is minimised amongst any such pairs of points.   Let $\alpha \rightarrow C_1, \beta \rightarrow C_2$ be paths with endpoints $a,b$. Furthermore, choose $\alpha, \beta$ to be locally geodesic (so that they have no spurs) and so that the disc diagram $D$ bounded by $\alpha\beta^{-1}$ has minimal area amongst all possible such choices. Note that if  $D$  contains either $C_1$ or $C_2$, then either $\alpha=\beta^{-1}$, in which case $a,b$ lie in the same connected component of  $C_1 \cap C_2$, contradicting the initial hypothesis, or we can find a smaller area disc diagram satisfying the conditions above and not containing the corresponding $C_i$ by "pushing across" to expel $C_i$ from $D$. Consider the disc diagram $D_+$ obtained by attaching $C_1$ and $C_2$ to $D$ along $\alpha$ and $\beta$. Then $D_+$ is a ladder by Theorem~\ref{thm:ladderc6}. Indeed, if $D_+$ were not reduced, then since $C_1,C_2$, and $D$ are reduced, a cancellable pair would contain one of $C_1,C_2$ and a 2-cell in $D$, but then, after performing the cancellation, we would obtain a new diagram $D'$ with the same properties as $D$ and smaller area, contradicting our previous choice.  Now, since $D$ was assumed to be minimal area, then Greendlinger's Lemma implies that $D$ is either a single cell, a ladder, or has at least three shells or spurs.  

If $D$ has a shell $C$ this contradicts the $C(6)$ condition, as its outerpath is then the concatenation of at most $2$ pieces (since neither $C_1$ and $C_2$ are contained in $D$, then the pieces are components of the intersection of $C$ with $C_1$ and $C_2$). Thus, $D$ is either a single vertex, in which case $a=b$, or $D$ has exactly two spurs (in the ladder case) or at least three spurs (in the general case). If $D$ has $\geq 3$ spurs, then at least one of these lies either on $\alpha$ or $\beta$, contradicting that these paths are locally geodesic. Thus $D$ has exactly $2$ spurs. In this case, by removing the spurs from the diagram we find vertices $a',b'$ in the same connected components of $C_1 \cap C_2$ as $a,b$ but such that  $\dist(a',b')< \dist(a,b)$, contradicting the  hypothesis that $a,b$ had been chosen to minimise the distance between points in the respective connected components of $C_1 \cap C_2$. 

Since in all cases we arrive at a contradiction, then we conclude that $C_1 \cap C_2$ has a single connected component, which is either a single piece (and hence simply connected and contractible)  or, up to removing backtracks, $\partial C_1 = \partial C_2$.
\end{proof}

We return to the setting of group presentations. A  presentation $P=\langle S \mid R\rangle$  is \emph{symmetrised} if whenever $r \in R$, then $\bar r \in R$ whenever $\bar r$ is a cyclic permutation of $r$ or $\bar r=r^{-1}$; the presentation $P$ is \emph{cyclically reduced} if no $r \in R$ contains a subword $w=ss^{-1}$ where $s \in S$.

A finite presentation  $P$ is a $C(n)$ \emph{small-cancellation presentation} if it is symmetrised and cyclically reduced, and $\mathcal{X}(P)$ satisfies the $C(n)$ condition.

Mainly to establish the notation that will be used later on, we review the construction of the Cayley complex associated to a group presentation.
Let $P=\langle s_1, \ldots, s_n \mid R=\{r_1, \ldots, r_k \}\rangle$ and let $B_n$ denote a bouquet of $n$ loops labelled by the generators $s_1, \ldots, s_n$. Recall that the \emph{Cayley complex} $\widetilde{\mathcal{X}}(P)$ for $P$  is the universal cover of the complex obtained by coning-off the cycles $c_1 \rightarrow B_n, \ldots, c_k\rightarrow B_n$ corresponding to the relations $r_1, \ldots, r_k$ in $B_n$. The subgroup  $ker(F_n \rightarrow G(P))= \langle \langle r_1, \ldots, r_k  \rangle \rangle$ is associated to a regular covering space $\hat B_n \rightarrow B_n$, and $\widetilde{\mathcal{X}}(P)$ is obtained from $\hat B_n$ by coning-off the set $\{g\tilde{c_i}\}_{i \in I, g\nclose{R} \in F_n/\nclose{R}}$.  In other words, $\hat B_n$ is the Cayley graph of $G(P)$.

Before stating our main result, we make a standard observation, which we prove for the sake of completeness.

\begin{lemma}\label{lem:c6embedding}
Let $P=\langle s_1, \ldots, s_n \mid r_1, \ldots, r_k \rangle$ be a $C(6)$ small-cancellation presentation. Then each lift $g\tilde{c_i} \rightarrow \widetilde{\mathcal{X}}(P)$  of $c_i \rightarrow \mathcal{X}(P)$ embeds in $\widetilde{\mathcal{X}}(P)$.
\end{lemma}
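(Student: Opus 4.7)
The plan is to argue by contradiction, using the familiar Greendlinger-based strategy in small-cancellation theory. Suppose the lift $g\tilde{c_i} \to \widetilde{\mathcal{X}}(P)$ is not embedded; then some proper sub-arc $\alpha \subsetneq g\tilde{c_i}$ has image a non-trivial closed loop in the Cayley graph $\widetilde{\mathcal{X}}(P)^{(1)}$. I would choose such an $\alpha$ of minimal combinatorial length, so that $\alpha$ is automatically a simple loop. Let $C$ denote the 2-cell of $\widetilde{\mathcal{X}}(P)$ whose attaching map is $g\tilde{c_i}$, and write $\partial C = \alpha\beta$, where $\beta$ is the complementary sub-arc, itself a closed loop. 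Since $\widetilde{\mathcal{X}}(P)$ is simply connected, $\alpha$ bounds a minimal-area (hence reduced) disc diagram $D \to \widetilde{\mathcal{X}}(P)$; and because $g\tilde{c_i}$ is the lift of a cyclically reduced relator, $\alpha$ has no backtracks, so $D$ has no spurs on $\partial D$.

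The core of the proof is to apply Greendlinger's Lemma (Theorem~\ref{thm:greendlingerc6}) to $D$, exploiting that $C$ is attached to $D$ along $\alpha$. The case $D = \{\text{vertex}\}$ forces $\alpha$ trivial and is excluded. If $D$ is a single 2-cell $C'$, then $C' \cap C$ contains the non-contractible loop $\alpha$, so Lemma~\ref{lem:c6cont} forces $\partial C' = \partial C$, giving $|\alpha| = |\partial C|$ and contradicting that $\alpha$ is proper. In all remaining cases ($D$ is a ladder or has at least three shells/spurs), since $\alpha$ has no backtracks the spur possibilities are excluded, so $D$ contains at least one shell $C'$ whose outerpath $q \subseteq \partial D = \alpha \subset \partial C$ is a sub-arc and whose innerpath is a concatenation of $k \leq 3$ pieces.

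Provided $C' \cap C$ is contractible (the generic alternative in Lemma~\ref{lem:c6cont}), the outerpath $q$ factors through $\partial C'$ and $\partial C$ as a bona fide piece, so $\partial C' = q \cdot p_1 \cdots p_k$ decomposes as a concatenation of at most four pieces. Now view $C'$ as a cell of the abstract combined diagram $D_+ = D \cup C$ obtained by gluing $C$ to $D$ along $\alpha$: in $D_+$ the loop $\alpha$ becomes internal, $\partial D_+ = \beta$, and $\partial C'$ lies entirely in the interior of $D_+$, so $C'$ has no boundary arc. The $C(6)$ condition then demands that $\partial C'$ be a concatenation of at least six pieces, contradicting the bound of four. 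The main obstacle I anticipate is the residual case $\partial C' = \partial C$ (the other alternative of Lemma~\ref{lem:c6cont}), in which $q$ is not a piece; there I would perform a cancellation or surgery on $D_+$, using that $D$ then contains a cell with the same attaching map as $C$, to extract a disc diagram bounding $\beta$ (or a strictly shorter sub-loop of $g\tilde{c_i}$) of strictly smaller area, contradicting the minimality in the choice of $(\alpha, D)$. Ensuring that $D_+$ can be made reduced compatibly with the shell analysis is the other subtlety that needs care.
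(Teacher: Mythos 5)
Your overall strategy is the same as the paper's: the paper also takes a sub-path $\sigma$ of the relator whose lift closes up, fills it with a minimal-area disc diagram $D$, glues the $2$-cell $C$ (viewed as a one-cell diagram with boundary $\tilde\sigma\tilde\beta$) onto $D$ along $\sigma$, and derives a contradiction from Greendlinger's Lemma applied to the combined diagram, using that cyclic reducedness forbids spurs. The only real difference is the endgame: the paper counts shells and spurs of $D_+$ itself (the only candidate shell on $\partial D_+=\beta$ is $C$, and there are no spurs, so Greendlinger's trichotomy fails), whereas you apply Greendlinger to $D$, promote the outerpath of a shell $C'$ of $D$ to a piece against $C$, and then contradict the $C(6)$ bound for the now-interior cell $C'$ of $D_+$. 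Both routes work, and your main line (including the single-cell case via Lemma~\ref{lem:c6cont} and the observation that a minimal-length $\alpha$ is simple and backtrack-free) is sound.

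The genuine weak point is your residual case, i.e.\ exactly the situation where $\partial C'=\partial C$ via a homeomorphism compatible with $q$ (equivalently, where $(C,C')$ is a cancellable pair of $D_+$ along $q$, which is also the only way $D_+$ can fail to be reduced). Cancelling that pair produces a diagram with boundary path $\beta$ and area $\area(D)-1$, but this does \emph{not} contradict the minimality you fixed: you minimised the length of $\alpha$ and then the area of diagrams bounding $\alpha$, and a small diagram bounding $\beta$ contradicts neither (indeed $|\beta|\geq|\alpha|$ by your length-minimality, so no contradiction is available there either). The fix is easy but must be built in from the start: minimise area over \emph{all} witnesses simultaneously, i.e.\ over all proper non-trivial sub-arcs of $g\tilde{c_i}$ with closed image together with all disc diagrams they bound. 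Since $\beta$ is itself such a witness (it is proper and closed because $\alpha$ is non-trivial and closed), the cancelled diagram then has strictly smaller area than the minimal one, giving the contradiction; alternatively, you can sidestep the case entirely by running the shell/spur count on the combined diagram $D_+$ as the paper does.
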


\begin{proof}
Assume $g\tilde{c_i} \rightarrow \widetilde{\mathcal{X}}(P)$ is not an embedding, let $\sigma$ be a non-closed subpath of $c_i$ whose induced lift $\tilde \sigma  \rightarrow \widetilde{\mathcal{X}}(P)$ factors through $g\tilde{c_i}\rightarrow \widetilde{\mathcal{X}}(P)$ and bounds a disc diagram $D$, which we can assume to be of minimal area. Let $\tilde \beta  \rightarrow \widetilde{\mathcal{X}}(P)$ be such that $\tilde \sigma \tilde \beta=g\tilde{c_i}$.  Let $D'$ be a minimal area disc diagram with boundary path $\tilde \sigma \tilde \beta$ and finally let $D''=D \cup D'$. Note that $D''$ is  reduced, as otherwise we would be able to cancel $C$ with a 2-cell of $D$, contradicting that $D$ had minimal area. Since  the endpoints of $\sigma$ are assumed to be distinct, Theorem~\ref{thm:greendlingerc6} implies that $D''$ is either a ladder or has at least $3$  spurs. Both cases lead to a contradiction, since the only shell on $\partial D''$ is $C$ and $g\tilde{c_i}$ cannot contain any spurs, as $P$ is cyclically reduced.
\end{proof}

\begin{notation}
In view of Lemma~\ref{lem:c6embedding}, we may drop the  ``\texttildelow'' from the notation and write $g c_i $ to denote a translate $g\tilde{c_i}$ of a cycle $\widetilde c_i$ in $\hat B_n$.
\end{notation}

The rest of this section is dedicated to proving that the Cohen--Lyndon property holds for $C(6)$ quotients of free groups. This is Theorem~\ref{thm:CLC6intro} from the introduction, which we restate below. 

Recall that a subset $T \subset G$ is a \emph{full left transversal for $H$ in $G$} if and only if every left coset of $H$ contains exactly one element of $T$. Let $S=\{s_1, \ldots, s_n\}$ and let $r_i$ be a word on $S \cup S^{-1}$. We use the notation $N(\langle r_i\rangle)$ to denote the normaliser of $\langle r_i\rangle$ in the free group $F(S)$.

\begin{theorem}\label{thm:CLC6}
Let $P=\langle s_1, \ldots, s_n \mid  r_1, \ldots, r_k \rangle$ be a $C(6)$ small-cancellation presentation. Then $(\langle  s_1, \ldots, s_n\rangle,   \{ \langle r_i \rangle\}_i  )$ has the Cohen--Lyndon property. 
That is, there exist full left transversals $T_i$ of  $N(\langle r_i\rangle)\nclose{r_1, \ldots, r_k}$ in $F(S)$ such that 
$$\nclose {r_1, \ldots, r_k}= \ast_{i \in I, t \in T_i} \langle r_i\rangle ^t.$$
\end{theorem}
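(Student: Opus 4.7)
The plan is to deduce Theorem~\ref{thm:CLC6} from a purely topological statement about the graph $\hat B_n$ (whose fundamental group is $\nclose{r_1,\ldots,r_k}$): namely, that for a suitable choice of basepoint $v_0$ and transversals $T_i$, the graph $\hat B_n$ deformation retracts onto the wedge of the chosen relator cycles $\{tc_i : i, t\in T_i\}$, joined at $v_0$ through a spanning tree. Since $\hat B_n$ is one-dimensional, $\nclose{r_1,\ldots,r_k} = \pi_1(\hat B_n,v_0)$ is free; the deformation retract above reads off a free basis consisting of loops which, based at $v_0$ via paths in the spanning tree, represent elements conjugate within $\nclose{r_1,\ldots,r_k}$ to $tr_it^{-1}$. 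This is precisely the desired decomposition $\nclose{r_1,\ldots,r_k} = \ast_{i,\, t\in T_i}\langle r_i\rangle^t$.

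First I would identify, for each $i$, the set of distinct cycles of the form $gc_i$ in $\hat B_n$ with the coset space $F(S)/N(\langle r_i\rangle)\nclose{r_1,\ldots,r_k}$; this identification is valid because $N(\langle r_i\rangle)\nclose{r_1,\ldots,r_k}/\nclose{r_1,\ldots,r_k}$ is exactly the stabiliser in the deck group $F(S)/\nclose{r_1,\ldots,r_k}$ of any fixed lift of $c_i$. Fixing $v_0 = 1\cdot\nclose{r_1,\ldots,r_k}$, I would well-order the resulting collection $\mathcal C$ of distinct relator cycles using the graph distance $\dist(v_0,\,\cdot\,)$ from $v_0$ to each cycle, breaking ties via an auxiliary well-ordering; this furnishes the ordering of Lemma~\ref{lem:well-defined}. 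The transversals $T_i$ then consist of one representative per cycle in $\mathcal C$ of type $i$.

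The heart of the proof is an inductive filtration $\{v_0\}=Y_0\subset Y_1\subset\cdots$ with $\bigcup_k Y_k=\hat B_n$ in which, at each stage, one either extends a chosen spanning tree by a single edge (not changing the homotopy type) or attaches the next cycle $C_k\in\mathcal C$ from the ordering. The key claim, playing the role of Lemma~\ref{clm:contractibleinduction}, is that each intersection $C_k\cap Y_{k-1}$ is a connected subgraph of $\hat B_n$. Granting this, attaching $C_k$ along a connected subgraph is homotopy-equivalent to wedging with $S^1$, and the direct limit yields the desired deformation retract of $\hat B_n$ onto $\bigvee_{C\in\mathcal C} C$, wedged at $v_0$ through the spanning tree.

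I expect the connectivity claim for $C_k\cap Y_{k-1}$ to be the main obstacle. Lemma~\ref{lem:c6cont} gives the pairwise case via minimal-area reduced disc diagrams in the simply-connected complex $\widetilde{\mathcal{X}}(P)$, but upgrading it to the intersection of $C_k$ with a union of many earlier cycles requires coordinating the small-cancellation combinatorics with the distance ordering. The strategy is: if $C_k\cap Y_{k-1}$ were disconnected, pick minimally separated vertices on distinct components of the intersection, form a minimal-area disc diagram between them in $\widetilde{\mathcal{X}}(P)$, and apply Greendlinger's Lemma (Theorem~\ref{thm:greendlingerc6}) together with the Ladder Theorem (Theorem~\ref{thm:ladderc6}) to extract a relator cycle strictly closer to $v_0$ than $C_k$, contradicting the ordering.
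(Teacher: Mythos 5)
Your overall architecture is the same as the paper's: identify the distinct translates $gc_i$ with cosets of $N(\langle r_i\rangle)\nclose{R}$, order the cells, rebuild $\hat B_n$ inductively, show each new relator cycle meets the previously built subcomplex in a nice set, and deduce a deformation retraction of $\hat B_n$ onto a wedge of relator cycles (the paper's Theorem~\ref{thm:htpy}), from which the free product decomposition follows by taking $\pi_1$. However, there is a genuine gap exactly where you predict the difficulty lies. First, a secondary issue: connectedness of $C_k\cap Y_{k-1}$ is not enough. Since the intersection is a subgraph of the circle $C_k$, ``connected'' still allows it to be all of $C_k$, in which case attaching $C_k$ adds no wedge factor and the transversal count in the free product fails; you need the intersection to be a proper arc, i.e.\ contractible. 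The paper handles the essential-loop case separately, reducing it to disconnectedness via Seifert--van Kampen and the induction hypothesis (first part of Lemma~\ref{clm:contractibleinduction}); your proposal does not address it.

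The main gap is the mechanism you offer for the disconnected case. You order the cells purely by distance from $v_0$ with arbitrary tie-breaking and then claim a minimal-area diagram between two components would ``extract a relator cycle strictly closer to $v_0$ than $C_k$, contradicting the ordering.'' A cycle strictly closer to $v_0$ does not contradict a distance-respecting ordering --- such a cycle would simply have been attached earlier, which is perfectly consistent. What the argument actually needs (and what the paper proves as Claim~\ref{clm:final}) is that \emph{every shell of the collared diagram is a cell that occurs earlier in the ordering than} $X_{v_{k_0}}$, so that the connecting path can be pushed across the shell while staying inside $\bigcup_{j<k_0}X_{v_j}$, reducing area and contradicting minimality. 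Proving this is the hard part: the paper's proof uses the Helly property of the cover (Lemmas~\ref{clm:helly} and~\ref{clm:maxints}) and, crucially, the fine structure of the nerve-based Lusin--Sierpi\'nski ordering of Definition~\ref{def:order} (least simplices adjacent to the newly attached cell). Note that distance-monotonicity is a \emph{consequence} of that ordering (Lemma~\ref{lem:well-defined}), not its definition; your proposal inverts this, and nothing in your sketch shows that an arbitrary distance-monotone order with ad hoc tie-breaking supports the shell-comparison step. Without an argument replacing Claim~\ref{clm:final} (or a proof that your coarser ordering suffices for it), the inductive contractibility claim --- and hence the theorem --- is not established.
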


We now introduce the objects and constructions derived from the presentation $P=\langle s_1, \ldots, s_n | r_1, \ldots, r_k \rangle$ that will be used in the proof of this theorem.

\begin{definition}[The structure graph]\label{def:structure}
Define the \emph{structure graph} $\Lambda$ of the presentation complex $\mathcal{X}(P)$ as follows. The vertex set $V(\Lambda):=V$ of $\Lambda$ has two types of vertices: \begin{enumerate}
\item The first type corresponds to translates $gc_i$ of the cycles $c_1, \ldots, c_k$ in $\hat B_n$. The set of vertices of this type will be denoted $V_T$. 
\item To describe the second type, we start by defining the \emph{untethered hull} of $\hat B_n$. This is the subcomplex $\mathcal{F}$ of $\hat B_n$  consisting of  all 1-cells of $\hat B_n$ that do not lie in any $gc_i$. In general, $\mathcal{F}$ is not connected. An \emph{untethered component} is a connected component $\mathcal{F}_\iota$  of $\mathcal{F}$. The second type of vertices of $\Lambda$ corresponds to the untethered components of $\mathcal{F}$. The set of vertices of this type will be denoted $V_U$.
\end{enumerate}  

Let $X_v$ denote the subcomplex of $\hat B_n$ corresponding to $v \in V$. Let $$\mathcal{U}=\{X_v : v \in V\}.$$ 
Note that $\mathcal{U}$ is a topological cover of $\hat B_n$.

The edges of $\Lambda$ are also of two types: they either correspond to non-empty intersections  $gc_i \cap g'c_j$, or $gc_i \cap \mathcal{F}_\iota$ where  $g, g'$ range over the left cosets of the $\langle r_i \rangle$'s,  $i, j \in I$, and $\iota $ ranges over the connected components of $\mathcal{F}$. 
By Lemma~\ref{lem:c6cont}, each intersection $gc_i \cap g'c_j$ is either a piece $p$ or a vertex, so an edge of  $\Lambda$ corresponds to a piece in $\hat B_n$, or to a vertex that arises from either an intersection between a pair of distinct $gc_i$'s, or from an intersection between a $gc_i$ and an untethered component of $\mathcal{F}$ (indeed, a non-empty intersection $gc_i \cap  \mathcal{F}_\iota$ must be a single vertex because $\hat B_n$ is the Cayley graph of $G(P)$). Note also that $\Lambda$ is a simplical graph. Indeed,  $\Lambda$ has no bigons by construction, and no loops by Lemma~\ref{lem:c6embedding}.
\end{definition}

\begin{remark}\label{rmk: contractible components} Note that, by construction, each $ \mathcal{F}_\iota$ is a tree, and hence contractible. 
Lemma~\ref{lem:c6cont} shows that each intersection $gc_i \cap g'c_j$ is also contractible.
\end{remark}

There is a Helly property for the elements of $\mathcal{U}$. It can be proven using induction and Greendlinger's Lemma, and can also be found in~\cite[6.11]{OP18}:

\begin{lemma}\label{clm:helly}
Let $\mathcal{X}(P)$ be the presentation complex associated to a $C(6)$ small-cancellation presentation $P$, let $\Lambda$ be its structure graph, and let $V' \subset V(\Lambda)$ be finite. If each pairwise intersection $ X_v \cap X_v'$ with $v,v' \in V'$ is non-empty, then the total intersection $\bigcap_{v \in V'}X_v$ is non-empty. 
\end{lemma}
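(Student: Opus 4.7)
The plan is to induct on $n := |V'|$; the base case $n = 2$ is the hypothesis. For $n \geq 3$, enumerate $V' = \{v_1, \ldots, v_n\}$ and pick vertices $p_i \in X_{v_i} \cap X_{v_{i+1}}$ for each $i$ (indices mod $n$). Since each $X_{v_i}$ is a cycle (in $V_T$) or a tree (in $V_U$), there is a non-backtracking path $\alpha_i \subset X_{v_i}$ from $p_{i-1}$ to $p_i$. The loop $\alpha_1 \cdots \alpha_n$ bounds a disc diagram $D \to \widetilde{\mathcal{X}}(P)$ since the Cayley complex is simply connected, and I fix a tuple $(p_i, \alpha_i, D)$ minimizing $(\area(D), \sum_i |\alpha_i|)$ lexicographically. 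The goal is to show $D$ is a single $0$-cell, because then all the $p_i$'s coincide at a common vertex $p \in \bigcap_{v \in V'} X_v$.

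By Greendlinger's Lemma (Theorem~\ref{thm:greendlingerc6}), it suffices to rule out shells and spurs in $D$. Suppose $C$ is a shell with outerpath $q$ meeting some $\alpha_i \subset X_{v_i}$. If $X_{v_i} = \mathcal{F}_\iota$ is a tree, the edges of $q$ lie in the untethered hull $\mathcal{F}$ and hence in no $2$-cell boundary, contradicting $q \subset \partial C$. If $X_{v_i} = gc_j$ is a cycle with bounding $2$-cell $R := gR_j$, then either $C = R$, in which case excising $R$ from $D$ and replacing $\alpha_i$ by the complementary arc of $gc_j$ yields a diagram of strictly smaller area, contradicting area-minimality; or $C \neq R$, in which case each sub-arc of $q$ in an $\alpha_i$ factors through both $\partial C$ and $\partial R$ and is therefore a piece, so $\partial C$ decomposes into at most $m + k$ pieces (with $m$ the number of $\alpha_i$'s met by $q$ and $k \leq 3$), contradicting $C(6)$ once this count is strictly less than $6$. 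For spurs, the non-backtracking assumption forbids interior spurs, so any spur must lie at a corner $p_i$; the last edge of $\alpha_i$ then coincides with the reverse of the first edge of $\alpha_{i+1}$, and this shared edge lies in both $X_{v_i}$ and $X_{v_{i+1}}$, so moving $p_i$ to its other endpoint removes the spur and strictly shortens $\sum_i |\alpha_i|$ without increasing area, contradicting length-minimality.

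The main obstacle is the shell case when the outerpath $q$ spans several consecutive $\alpha_i$'s through corner vertices of valence $2$ in $D$: the piece-count bound $m + k \leq 5$ needed to contradict $C(6)$ is automatic for the triangular case $n = 3$ (since then $m \leq 3$, and if $q$ spans all three sides then $D$ is a single $2$-cell and $k = 0$), but requires localization to a triangular sub-configuration for larger $n$, carried out via the inductive hypothesis applied to size-$(n-1)$ subsets of $V'$ so as to reduce the general case to $n = 3$. A secondary subtlety is justifying the excise-and-flip operation in the case $C = R$: one must check that removing the $2$-cell $R$ and swapping $\alpha_i$ for the complementary arc of $gc_j$ produces a bona fide disc diagram of strictly smaller area, which follows from the clean arc-intersection provided by the cycle-embedding Lemma~\ref{lem:c6embedding} together with the area-minimality of $D$.
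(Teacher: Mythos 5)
Your overall strategy -- a polygonal disc diagram with one side in each $X_{v_i}$, minimized and then attacked with Greendlinger's Lemma and piece counts -- is in the same spirit as what the paper intends (the paper itself gives no proof, saying only that the lemma follows by induction and Greendlinger's Lemma and citing \cite[6.11]{OP18}). But there is a genuine gap at exactly the point you label ``the main obstacle.'' For $n>3$ the count $m+k$ for a shell gives no contradiction, and your proposed remedy -- ``localization to a triangular sub-configuration \dots via the inductive hypothesis applied to size-$(n-1)$ subsets'' -- is not an argument: knowing that every $(n-1)$-element subfamily has a common point does not, for an abstract family, imply that the whole family does (that implication is itself a Helly property and is precisely what is being proved), and it does nothing to confine the outerpath of a shell to at most two sides. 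Note also that everything you actually wrote uses only the consecutive intersections $X_{v_i}\cap X_{v_{i+1}}$, and the diagrammatic part alone cannot succeed on that data: in the hexagonal tiling (a $C(6)$ complex) a long ring of hexagons has all consecutive intersections non-empty and empty total intersection, and the disc it collars is reduced, spurless, and has shells whose outerpaths consist of up to three pieces spread over one or two sides, with innerpaths of three pieces -- six pieces in total, so no violation of $C(6)$ ever appears. This is of course not a counterexample to the lemma (non-consecutive pairs there are disjoint), but it shows that the non-consecutive pairwise intersections must enter the argument somewhere, and your proposal never says how; the unwritten ``localization'' step is where the entire content of the lemma lies.

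The standard way to close the induction (essentially \cite[6.11]{OP18}) is to do the triangle case roughly as you do, and then work inside a single element of $\mathcal{U}$ rather than around an $n$-gon: fix $X_{v_n}$; by Lemma~\ref{lem:c6cont} each $Y_i:=X_{v_i}\cap X_{v_n}$ ($i<n$) is a single piece or a vertex, hence a connected subgraph of the cycle or tree $X_{v_n}$, and by the already-established triple case the $Y_i$ pairwise intersect. If $v_n\in V_U$ then $X_{v_n}$ is a tree and subtrees of a tree satisfy the Helly property, so you are done; if $X_{v_n}=\partial R$ is a cycle, then pairwise-intersecting arcs with empty total intersection force three of them to cover $X_{v_n}$, exhibiting $\partial R$ as a concatenation of at most three pieces and contradicting the $C(6)$ condition (as in the shell arguments elsewhere in the paper), after which all the $Y_i$ lie in a common subtree of the cycle and the tree case applies. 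Two smaller loose ends in your $n=3$ case: when the outerpath of the shell is all of $\partial D$, your remark ``$k=0$'' by itself yields no contradiction -- you still need to observe that the three sides then exhibit $\partial C$ as a concatenation of at most three pieces (or that $C$ equals one of the cells bounded by the $X_{v_i}$ and can be expelled); and the excise-and-flip move in the case $C=R$ must also handle an outerpath that crosses a corner $p_i$ rather than lying in a single $\alpha_i$.
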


The following lemma, a version  of which is proven in~\cite[6.12]{OP18}, provides finer information about the total intersections in Lemma~\ref{clm:helly}. We include a proof since it is illustrative of the methods of this paper.

\begin{lemma}\label{clm:maxints}
Let $\mathcal{X}(P)$ be the presentation complex associated to a $C(6)$ small-cancellation presentation $P$ and let $\Lambda$ be its structure graph. For all $I \subset V(\Lambda)$ with $\infty >|I|\geq 2$, the intersection $\bigcap_{v \in I} X_v $ is non-empty if and only if it is connected (and hence contractible). 
\end{lemma}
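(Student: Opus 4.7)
The plan is to split on the two types of vertices in $I$: cycle-type (translates $gc_i$ of relator cycles) and untethered-type (components $\mathcal{F}_\iota$). First I would dispose of the mixed cases. If $I$ contains two distinct untethered components, the intersection is empty since distinct components of $\mathcal{F}$ are disjoint by definition. If $I$ contains exactly one $\mathcal{F}_\iota$ together with some cycle vertices, then since edges of $\mathcal{F}_\iota$ lie in no relator cycle, the intersection is contained in $\mathcal{F}_\iota \cap gc_i$ for any $gc_i \in I$, and I would show this contains at most one vertex. The substantive case is $I = \{gc_{i_1}, \ldots, gc_{i_m}\}$; for $m=2$ this is Lemma~\ref{lem:c6cont} combined with the distinctness of cycles in $V_T$, so only $m \geq 3$ needs work.

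To prove that $\mathcal{F}_\iota \cap gc_i$ has at most one vertex, suppose towards a contradiction that $v_1 \neq v_2$ both lie in this intersection. Let $\gamma$ be the unique geodesic in the tree $\mathcal{F}_\iota$ from $v_1$ to $v_2$, and let $\alpha$ be a shortest arc of $gc_i$ from $v_1$ to $v_2$, so that the complementary arc $\alpha'$ with $gc_i = \alpha\alpha'$ is non-trivial. Take a minimum-area disc diagram $D$ bounded by $\alpha\gamma^{-1}$, form $D^+$ by attaching the 2-cell of $gc_i$ along $\alpha$, and reduce to obtain a reduced diagram with boundary $\alpha'\gamma^{-1}$. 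Since edges of $\gamma$ lie in no 2-cell and each edge of $\alpha'$ lies on at most the one attached 2-cell, any shell of $D^+$ other than the attached cell has outerpath inside $\alpha'$, which by Lemma~\ref{lem:c6cont} is a single piece; together with an innerpath of at most three pieces, such a shell's boundary is a concatenation of at most four pieces, contradicting $C(6)$. The choice of $\gamma$ as a tree geodesic and $\alpha'$ as a sub-arc of the cyclically reduced cycle $gc_i$ ensures $\partial D^+$ has no spurs. Greendlinger's Lemma (Theorem~\ref{thm:greendlingerc6}) and the Ladder Theorem (Theorem~\ref{thm:ladderc6}) then leave only the single-vertex case (ruled out since $v_1 \neq v_2$) or a ladder whose non-attached end-shell is forced to have outerpath on $\alpha'$ or $\gamma$, both impossible.

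For $m \geq 3$, I would establish path-connectedness directly. Pick $a, b \in \bigcap_j gc_{i_j}$ with $a \neq b$ (the case $a=b$ is trivial). For each pair $(j,k)$, Lemma~\ref{lem:c6cont} provides a connected arc $gc_{i_j} \cap gc_{i_k}$ containing both $a$ and $b$; the unique sub-arc from $a$ to $b$ inside it coincides with exactly one of the two arcs of $gc_{i_j}$ from $a$ to $b$. If, for some fixed $j$, two different indices $k, k'$ yielded different arcs, then both arcs of $gc_{i_j}$ from $a$ to $b$ would be non-trivial sub-arcs of pieces, hence themselves pieces, so $gc_{i_j}$ would decompose as a concatenation of just two pieces, contradicting $C(6)$. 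Hence for each $j$ a single arc $\alpha_j$ of $gc_{i_j}$ lies in $gc_{i_j} \cap gc_{i_k}$ for every $k$, and therefore in $\bigcap_k gc_{i_k}$, giving a path from $a$ to $b$ inside the total intersection. Contractibility then follows: the intersection is a connected subgraph of the embedded cycle $gc_{i_1}$, and it cannot equal all of $gc_{i_1}$ without forcing $gc_{i_1} \subseteq gc_{i_k}$ for every $k$, which contradicts the distinctness of the $gc_{i_j}$ as elements of $V_T$ (since they are embedded cycles).

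The main obstacle I expect is the sub-claim that $\mathcal{F}_\iota \cap gc_i$ has at most one vertex, because it requires carefully controlling the reducedness of $D^+$ after attaching the 2-cell and verifying that the Greendlinger dichotomy leaves no escape route where the attached cell silently absorbs every hypothetical shell. The $m \geq 3$ argument, by contrast, is a clean combinatorial consequence of the fact that $C(6)$ forbids a single relator cycle from being expressible as a concatenation of just two pieces.
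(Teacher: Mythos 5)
Your case split by vertex type is a genuinely different route from the paper, which proves the lemma by induction on $|I|$: there, the inductive hypothesis supplies a path $\alpha$ inside $\bigcap_{v\in I}X_v$ and a path $\beta$ inside $X_{v_{k+1}}$ of minimal length, and Greendlinger's Lemma forces the diagram bounded by $\alpha\beta^{-1}$ to be degenerate, treating cycle-type and untethered-type vertices uniformly. Your argument for three or more cycle vertices is correct (granting the paper's convention that distinct vertices of $V_T$ carry distinct cycles, so each pairwise intersection is a piece or a vertex) and is attractively diagram-free: the observation that otherwise some $gc_{i_j}$ would be a concatenation of two pieces, violating $C(6)$, does all the work. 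The cases involving two untethered components, and the reduction of the mixed case to the claim that $\mathcal{F}_\iota\cap gc_i$ is at most a single vertex, are also fine.

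The genuine gap is in your proof of that single-vertex claim, precisely where you anticipated trouble. The step ``any shell of $D^+$ other than the attached cell has outerpath inside $\alpha'$, which by Lemma~\ref{lem:c6cont} is a single piece'' does not follow: Lemma~\ref{lem:c6cont} allows the alternative $\partial C_1=\partial C_2$, so a shell whose $2$-cell maps onto the cycle $gc_i$ itself has an outerpath on $\alpha'\subset gc_i$ that is \emph{not} a piece, and no $C(6)$ contradiction results. Attaching $C_0$ along $\alpha$ and reducing does not exclude such cells: a cell of $D$ mapping to $gc_i$ need not form a cancellable pair with $C_0$, since Definition~\ref{def:cancellablecell} requires a homeomorphism $\partial C_1\to\partial C_2$ commuting with the maps to $X$, which can fail (for instance when $r_i$ is a proper power -- the very phenomenon flagged in the footnote to Claim~\ref{clm:final}), and such a cell need not even meet $C_0$, since disc diagrams are not embedded. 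The same caveat undermines the ladder endgame, where the non-attached end shell could again map to $gc_i$. The statement is still true and the argument is repairable: either impose a minimality assumption over all such configurations and, whenever a shell $S$ maps onto $gc_i$, excise $S$ and re-route the boundary along the complementary arc of $gc_i$ (a ``push-across'' as in the proof of Lemma~\ref{lem:c6cont}) to reduce area; or bypass diagrams entirely by noting that every edge labelled by a generator occurring in some relator lies on a translate of a relator cycle, so untethered edges are exactly those labelled by generators occurring in no relator, and the single-vertex claim then follows from free-product normal forms. As written, however, the mixed case is not yet proved.
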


\begin{proof}
We proceed by induction on $|I|$. The base case is $|I|=2$ and follows from Lemma~\ref{lem:c6cont} and from the fact that a non-empty intersection $gc_i \cap  \mathcal{F}_\iota$ must be a single vertex because $\hat B_n$ is the Cayley graph of the presentation.
Assume that the claim holds for intersections $\bigcap_{v \in I} X_v \neq \emptyset$ where $|I|\leq k$, and consider $\bigcap_{v \in I\cup\{v_{k+1}\}} X_v$. 
If this intersection is empty, then there is nothing to show. So we may assume that $\bigcap_{v \in I\cup\{v_{k+1}\}} X_v\neq \emptyset$, and thus contains a vertex $b$ of $\hat B_n$. 
The induction hypothesis implies that each intersection $\bigcap_{v \in J} X_{v_j}$  with $J \subsetneq I$ and $I=J \cup \{v_j\}$ is contractible, and is in fact a tree as each $X_v$ is $1$-dimensional. 
As $\bigcap_{v \in I\cup\{v_{k+1}\}} X_v \subset \bigcap_{v \in I} X_v$, it suffices to show that $\bigcap_{v \in I\cup\{v_{k+1}\}} X_v$ is connected. 
To this end, let $x,y$ be vertices in $\bigcap_{v \in I\cup\{v_{k+1}\}} X_v$. 

Consider paths $\beta \rightarrow X_{v_{k+1}}$ and $\alpha \rightarrow \bigcap_{v \in I} X_v$ with endpoints $x,y$, so that $\beta\alpha^{-1}$ bounds a disc diagram $D$ in $\widetilde{\mathcal{X}}(P)$.  Moreover, amongst all pairs of paths with endpoints $x,y$ as above, choose $\beta$ and $\alpha$ to have minimal length. 
Consider the disc diagram $D^+=D \cup_\beta X_{v_{k+1}} \cup_\alpha X_v$, where $v \in I$. We claim that $D^+$ is a degenerate disc diagram, that is, it contains no $2$-cells, so $\alpha=\beta$ and $x$ and $y$ lie in the same connected component of $\bigcup_{v \in I\cup\{v_{k+1}\}} X_v$. 
To prove this assertion, note that the choices of $\alpha$ and $\beta$ above imply that $\partial D$ cannot contain any spurs, as such spurs could be removed to shorten $\alpha$ and/or $\beta$. 
Thus $D$ must contain a shell $S$, and the outerpath of $S$ is either a subpath of $\alpha$, a subpath of $\beta$, or contains either $x$ or $y$. In the first and second cases, $\partial S \cap \partial D$ is a single piece, and the innerpath of $S$ is the concatenation of at most $3$ pieces, so $\partial S$ is the concatenation of at most $4$ pieces, contradicting the $C(6)$ condition. In the third case, similarly, $\partial S \cap \partial D$ is the concatenation of at most $2$ pieces, so the innerpath of $S$ is the concatenation of at most $3$ pieces, and $\partial S$ is the concatenation of at most $5$ pieces, again contradicting the $C(6)$ condition.

Thus, as asserted, $D$ must be a degenerate disc diagram, so $\bigcap_{v \in I\cup\{v_{k+1}\}} X_v$ is connected and contractible and the induction is complete.
\end{proof}

 As noted in Definition~\ref{def:structure}, the set $\mathcal{U}=\{X_v : v \in V\}$ provides a topological cover of $\hat B_n$. The structure graph $\Lambda$, being simplicial by construction and by Lemma~\ref{lem:c6embedding}, is the  1-skeleton of the geometric realisation of the nerve complex of  $\mathcal{U}$.

\begin{definition}[The nerve of $\mathcal{U}$]
The \emph{nerve complex} $\mathbf{N}(\mathcal{U})$ of a topological covering $\mathcal{U}$ is the abstract simplicial complex $$\mathbf{N}(\mathcal{U})=\{V'  \subset V  : \bigcap_{v \in V'} X_v \neq \emptyset, |V'|< \infty \}.$$
\end{definition}

We note the following  Lemma:

\begin{lemma}\label{lem:hellyimpliesniceunion}
Let $\mathcal{X}(P)$ be the presentation complex associated to a $C(6)$ presentation $P$ and let $\Lambda$ be its structure graph. Let $\sigma$ be a simplex of $\mathbf{N}(\mathcal{U})$ and let $Y_\sigma$  be the subcomplex of $\mathcal{X}(P)$ consisting of all the 2-cells whose boundary corresponds  to vertices in $\sigma$. Then $Y_\sigma$ is simply connected.
\end{lemma}

\begin{remark}
For simplicity, in the proof below we assume $Y_\sigma=\bigcup_{v \in \sigma} C_v$ where each $C_v$  is a 2-cell with $\partial C_v=X_v$ corresponding to a vertex of $\sigma$. In general we could have that more than one 2-cell is glued along $X_v$ (if some relations are proper powers), but this does not affect the simple-connectivity of $Y_\sigma$ (though it could have an effect in its contractibility).
\end{remark}

\begin{proof}
For the first part, the proof is by induction on $|\sigma|$. If $\sigma$ is a single vertex $v$, then $Y_\sigma$ is  a single 2-cell, and thus simply-connected. Similarly when  $|\sigma|=2$, by Lemma~\ref{lem:c6cont}. Assume now that the claim holds whenever $|\sigma|< n$, and let $\sigma$ be an $n$-dimensional simplex,   let $v'$ be a vertex of $\sigma$, and consider the subsimplex $\sigma'$  obtained by removing $v'$  from $\sigma$. 
Then $Y_\sigma=Y_{\sigma'}\cup Y_{v'}$ is simply connected if and only if  $Y_{\sigma'}\cap Y_{v'}=\bigcup_{v \in \sigma'} C_v \cap C_{v'}$ is simply-connected.
But $(\bigcup_{v \in \sigma'} C_v) \cap C_{v'}=\bigcup_{v \in \sigma'}(C_{v}\cap C_{v'})$, so  to prove that this intersection is simply connected, it suffices, by induction, to check that each intersection $(C_{v_i}\cap C_{v'})\cap(C_{v_j}\cap C_{v'})$ is simply connected. But this follows from the Helly property.
\end{proof}

 It is natural to use $\mathbf{N}(\mathcal{U})$ to organise the data of $\mathcal{U}$. Concretely, we do this by defining an  ordering on the vertices of $\mathbf{N}(\mathcal{U})$; this ordering is key to the proof of Theorem~\ref{thm:CLC6}.

\begin{definition}[The ordering on $\mathcal{U}$]\label{def:order}
Choose $v_0 \in V(\Lambda)$ corresponding to a $gc_i$. 
We define a total ordering $\leq$ on $\mathcal{U}$, that is, an injective function $\varphi: V \rightarrow \naturals$. 
To do this, we first define an ordering on the simplices of $\mathbf{N}(\mathcal{U})$ inductively as follows.

Start by setting $\varphi(v_0)=0$, and  define 
$$A_0=\{u \in V\ : \{u,v_0\} \in \mathbf{N}(\mathcal{U})\}\cup\{v_0\}.$$ 
Choose $u_1 \in A_0$, let $\varphi(u_1)=1$, and let 
$$A_{01}=\{u \in V : \{v_0, u_1, u\} \in \mathbf{N}(\mathcal{U})\}\cup\{v_0, u_1\}.$$ Inductively, assume that $\varphi$ has been defined for a subset of cardinality $k$, so $\varphi(v_0)=0, \ldots, \varphi(v_k)=k$. 
For each non-empty simplex $\{v_i, \ldots, v_\ell \}$ of $\mathbf{N}(\mathcal{U})$ where $\varphi(v_i), \ldots, \varphi(v_\ell)$ are already defined, let $$A_{v_i\ldots v_\ell}=\{u \in V : \{v_i, \ldots, v_\ell\}\cup \{u\} \in \mathbf{N}(\mathcal{U})\}\cup \{v_i\ldots v_\ell\}.$$ 
We view each simplex $\{v_i, \ldots, v_\ell \}$ as an ordered tuple $(v_i, \ldots, v_\ell )$ where $\varphi(v_i) < \varphi(v_{i+1}) < \ldots < \varphi(v_\ell)$, and 
order the simplices using the \emph{Lusin–Sierpi\'nski order}\footnote{Also known as the \emph{Kleene–Brouwer order}. Perhaps it would be more suitable to call it the \emph{long-lex order}, in analogy with the \emph{short-lex order}, which is used frequently in geometric group theory.}, which is defined as follows. For a pair of simplices, set $\{v_i, \ldots, v_\ell \}< \{w_{i'}, \ldots, w_{\ell'} \}$ if either 
\begin{enumerate}
\item  there exists $j \leq \min\{\ell,\ell'\}$ with  $v_\iota=w_\iota$ for all $ \iota < j$, and $v_{j}< w_{j}$, or
\item $\ell > \ell'$ and $v_j=w_{j'}$ for all $j'\leq \ell'$.
\end{enumerate}

Now consider a least simplex $\{v_i\ldots v_\ell\}$  such that there exists  $u \in A_{v_i\ldots v_\ell}$ whose image is not yet defined and choose such a vertex $u$ arbitrarily. 
Set $\varphi(u)=k+1$. In Figure~\ref{fig:orderingex} we illustrate the ordering  for a simple example.

A priori, $\varphi$ is only defined for a subset $V' \subset V$; we show in Lemma~\ref{lem:well-defined} that in fact $V' = V$.
\end{definition}

\begin{figure}
\centerline{\includegraphics[scale=0.57]{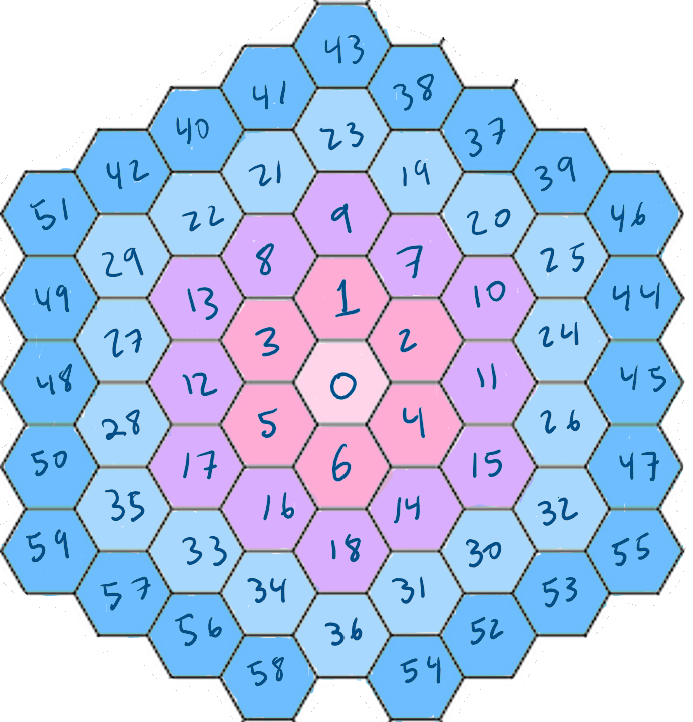}}
\caption{The ordering in Definition~\ref{def:order} for a portion of a hexagonal grid.}
\label{fig:orderingex}
\end{figure}

Recall that, for vertices $v,v' \in V$,  $\dist(v,v')$ denotes the usual graph metric.

\begin{lemma}\label{lem:well-defined} 
Let $v, v' \in V'$. If $\varphi(v)<\varphi(v')$, then $\dist(v, v_0) \leq \dist(v', v_0)$. In particular, the function $\varphi: V \rightarrow \naturals$ is well-defined, i.e.,  $V'=V$.
\end{lemma}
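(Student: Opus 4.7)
The plan is to prove by induction on $k$ the strengthened invariant
$$(I_k) : \text{ for every } v \in L_k := \{v_0, v_1, \ldots, v_k\} \text{ and every } u \in V \setminus L_k,\ \dist(v, v_0) \leq \dist(u, v_0).$$
This immediately yields the lemma: for $\varphi(v) < \varphi(v') =: k'$, apply $(I_{k'-1})$, since at step $k'-1$ the vertex $v$ is labeled while $v'$ is not. A direct consequence of the invariants $(I_j)$ for $j \leq k$ is \emph{monotonicity among already-labeled vertices}: if $\varphi(u) < \varphi(w) \leq k$, then $\dist(u, v_0) \leq \dist(w, v_0)$.

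The base case $k=0$ is trivial. For the inductive step, assume $(I_0), \ldots, (I_k)$, and set $D := \min_{u \in V \setminus L_k} \dist(u, v_0)$; by $(I_k)$, every vertex at distance strictly less than $D$ lies in $L_k$. Fix any unlabeled vertex $u^*$ at distance $D$ and a geodesic $v_0 = y_0, y_1, \ldots, y_D = u^*$ in $\Lambda$. Each $y_i$ with $i < D$ is labeled, so the singleton simplex $\{y_{D-1}\}$ is available at step $k+1$, with $u^*$ as an unlabeled neighbor.

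The heart of the argument is to show that the vertex $v_{k+1}$ chosen at step $k+1$ satisfies $\dist(v_{k+1}, v_0) = D$. Let $\sigma$ be the simplex the algorithm extends at step $k+1$; by minimality of $\sigma$ in the Lusin--Sierpi\'nski order, $\sigma \leq \{y_{D-1}\}$. This forces one of two cases. In Case 1, the first entry $w$ of $\sigma$ satisfies $\varphi(w) < \varphi(y_{D-1})$; then monotonicity gives $\dist(w, v_0) \leq D - 1$, and since $w$ and $v_{k+1}$ are adjacent in $\Lambda$, $\dist(v_{k+1}, v_0) \leq D$. In Case 2, $y_{D-1} \in \sigma$; then $y_{D-1}$ and $v_{k+1}$ are adjacent in $\Lambda$, and again $\dist(v_{k+1}, v_0) \leq D$. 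Conversely, $\dist(v_{k+1}, v_0) \geq D$ because $v_{k+1}$ is unlabeled at step $k$, hence at distance $\geq D$ by definition of $D$. Equality follows and $(I_{k+1})$ holds.

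For the well-definedness claim, $\Lambda$ is connected because $\hat B_n$ is connected and $\mathcal{U}$ covers it by connected subcomplexes; since the algorithm labels vertices in non-decreasing order of distance from $v_0$, each vertex at finite distance is reached after finitely many steps, so $\varphi$ is defined on all of $V$. The main obstacle is the case analysis comparing $\sigma$ against the witness singleton $\{y_{D-1}\}$; it is essential that both the ``longer-prefix'' rule (2) and the ``smaller first-differing entry'' rule (1) of the Lusin--Sierpi\'nski order cooperate to prevent the algorithm from labeling any vertex before all strictly closer vertices have been labeled.
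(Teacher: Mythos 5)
Your proof of the distance--monotonicity statement is correct, but it is organised genuinely differently from the paper's. The paper argues by induction on $N=\varphi(v')$, reduces to consecutive labels, and then runs a local case analysis on how $v$ sits relative to the least simplex $\sigma$ attached to $v'$ (whether $v\in\sigma$, whether $\{v\}\cup\sigma$ is a simplex, or neither), which requires intermediate claims such as ``$\dist(v',v_0)$ is realised by a path passing through the least vertex of $\sigma$''. You instead strengthen the induction hypothesis to a global invariant -- after every step of the labelling procedure, every labelled vertex is at least as close to $v_0$ as every unlabelled one -- and induct on the step itself, comparing the least simplex $\sigma$ chosen by the procedure against the explicit witness singleton $\{y_{D-1}\}$ supplied by a geodesic to a closest unlabelled vertex. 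The two clauses of the Lusin--Sierpi\'nski order then yield exactly your two cases (the first entry of $\sigma$ has smaller label than $y_{D-1}$, or $y_{D-1}\in\sigma$), and in either case the newly labelled vertex is adjacent to a vertex at distance at most $D-1$, hence lies at distance exactly $D$. The underlying mechanism (minimality of $\sigma$ in the Lusin--Sierpi\'nski order plus inductively known monotonicity) is the same as in the paper, but your strengthened invariant makes the case analysis essentially mechanical and avoids the paper's more delicate unproved steps; the cost is only that you must carry the slightly stronger statement through the induction, and you should say explicitly that a least candidate simplex exists at each step (there are finitely many simplices on the finitely many labelled vertices, and $\{y_{D-1}\}$ is a candidate).

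One caveat, which applies equally to the paper (whose proof is silent on the ``in particular'' clause): your last step infers that $\varphi$ is defined on all of $V$ from the fact that vertices are labelled in non-decreasing distance from $v_0$, but this inference needs every ball $\{v:\dist(v,v_0)\le d\}$ of $\Lambda$ to be finite. Vertices of type $V_T$ have finite valence in $\Lambda$, but an untethered component may meet infinitely many translates $gc_i$ -- for instance in $\langle a,b,t\mid r(a,b)\rangle$ the $t$-lines are untethered components of infinite valence -- and then balls of $\Lambda$ can be infinite and the procedure of Definition~\ref{def:order}, as literally stated, need not exhaust $V$: once such a vertex is labelled, every subsequent least simplex has its first entry among finitely many fixed vertices, so all later-labelled vertices stay in a bounded ball. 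So either local finiteness of $\Lambda$ should be verified (it holds whenever every generator occurs in some relator, which is the harmless case one may reduce to), or the well-definedness claim needs a separate argument. Your proof of the monotonicity assertion itself is unaffected by this.
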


\begin{proof}
We prove the lemma by induction on $N=\varphi(v')$. If  $N=1$ there is nothing to show, so assume that the result holds for all pairs of vertices with image $< N_0$.

Let $v,v'\in V$ and assume $\varphi(v)<\varphi(v')\leq N_0$. 
Note that if $\varphi(v)$ and $\varphi(v')$ are not consecutive integers, then there exists $u \in V$ with $\varphi(v)< \varphi(u)=\varphi(v')-1$, and by the induction hypothesis $\dist(v, v_0) \leq \dist(u, v_0)$, so it would suffice in any case to show that $\dist(u, v_0) \leq \dist(v', v_0)$ to  prove the lemma. Thus, we assume that $\varphi(v)$ and $\varphi(v')$ are  consecutive integers. By definition, there is a simplex $\sigma$ which is least in the Lusin--Sierpi\'nski order described above and such that $\{v'\}\cup \sigma$ is a simplex. 

We first note that for any vertex $u$ adjacent to $v'$ and for any vertex $w'$ in $\sigma$ with $\varphi(w')< N_0$, if $\dist(w', v_0) > \dist(u, v_0)$, then by the induction hypothesis $\varphi(w')>\varphi(u)$. In particular, this holds for the least vertex $w'$ of $\sigma$, so  $\dist(u, v_0) \geq \dist(w, v_0)$ for every vertex $u$ adjacent to $v'$, or otherwise this would contradict that $\sigma$ is the least simplex adjacent to $v'$. Thus, the distance $\dist(v',v_0)$ is realised by a path passing through $\sigma$, and in particular, passing through its least vertex $w'$. There are now several cases to consider:
\begin{enumerate}
\item If $v$ is a vertex of $\sigma$, then $v$ is adjacent to $v'$, and by the discussion on the previous paragraph, $\dist(v, v_0) \leq \dist(v', v_0)$,
\item if $v$ is not a vertex of $\sigma$, then  either
\begin{enumerate}
\item  $\{v\}\cup \sigma$ is also a simplex, in which case, as claimed:
$$\dist(v, v_0)=\dist(v, w')+\dist(w', v_0)=1+\dist(w', v_0)=\dist(v', w')+\dist(w', v_0)=\dist(v',v_0),$$ 
 \item  or  $\varphi(w') < \varphi(v)$. Since $\varphi(v)<\varphi(v')$, the least vertex, say $w$,  adjacent to $v$ must satisfy that $\varphi(w) < \varphi(w')$ and in particular that $\varphi(w)< N_0$. Thus $\dist(w, v_0) \leq \dist(w', v_0)$ by the induction hypothesis applied to  $w'$ and $w$, so
 $$\dist(v,v_0)=1+\dist(w, v_0) \leq 1+\dist(w', v_0)=\dist(v',v_0).$$ 
\end{enumerate}
\end{enumerate}

And the induction is complete. 

This implies that $\varphi$ is defined in all of $V$ since every vertex in the ball $B(r, v_0)$ is smaller in the ordering than an element outside this ball, and we can cover $\mathbf{N}(\mathcal{U})$ by nested balls of increasing radii.
\end{proof}

Write $V=\{v_j\}_{j \in \naturals}$, where the indexing agrees with the ordering in Definition~\ref{def:order}, and recall that  $X_{v_j}$ denotes the subcomplex of $\hat B_n$ corresponding to $v_j \in V$.  The ordering on $V$ induces an ordering on $\mathcal{U}$, 
so $\varphi(v)<\varphi(v')$ if and only if  $X_v<X_{v'}$.
Thus, for ease of notation, in what follows we refer to the ordering of $V$ and the ordering of $\mathcal{U}$ interchangeably.
 
The next lemma is our main technical result. 
 
\begin{lemma}\label{clm:contractibleinduction} Let $\mathcal{X}(P)$ be the presentation complex associated to a $C(6)$ small-cancellation presentation $P$, let $V=\{v_j\}_{j \in \naturals}$ be the vertices of its structure graph $\Lambda$, and for each $v_j \in V$, let $X_{v_j}$ be the corresponding element of $\mathcal{U}$. For each $k \in \naturals$, the intersection
$$\bigcup_{j < k} X_{v_j} \cap X_{v_k}$$ is contractible.
\end{lemma}

\begin{proof}

The proof is by induction on $k$. When  $k=1$ there is nothing to show, and the case of $k=2$ holds because each intersection 
$X_{v} \cap X_{v'}$ is either a piece or a single vertex.

Assume that $\bigcup_{j < k} X_{v_j} \cap X_{v_k}$ is contractible for all $k < k_0$. We now show that $\bigcup_{j < k_0} X_{v_j} \cap X_{v_{k_0}}$ is contractible.

Suppose towards a contradiction that $\bigcup_{j < k_0} X_{v_j} \cap X_{v_{k_0}}$ is not contractible. Then, since this intersection is $1$-dimensional,  it is either disconnected, or there is a non-nullhomotopic loop $\sigma \rightarrow \bigcup_{j < k_0} X_{v_j} \cap X_{v_{k_0}}$. We  note that the second case can be reduced to the first, and is thus precluded by the induction hypothesis.

\textbf{Non-simply-connected intersection:}   
 Suppose  $\sigma \rightarrow \bigcup_{j < k_0} X_{v_j} \cap X_{v_{k_0}}$ is essential (that is, not nullhomotopic), and  let $\bigcup_{j < k_0} C_{v_j}$  denote the subcomplex of $\widetilde \chi(P)$ where $\partial C_{v_j}=X_{v_j}$ for each $j$. 
Then either  $\sigma \rightarrow \bigcup_{j < k_0}  C_{v_j}$ is also essential (as $\sigma \rightarrow  C_{v_{k_0}}$ cannot be essential), or $\bigcup_{j < k_0} C_{v_j}\cup C_{v_{k_0}}$ has the homotopy-type of a 2-sphere. 
The second case is impossible since the Cayley complexes of $C(6)$ small-cancellation groups are aspherical~\cite{Lyn66} (after possibly collapsing collections of 2-cells with the same boundary path) -- indeed, consider a reduced spherical diagram $S \rightarrow \bigcup_{j < k_0} C_{v_j}\cup C_{v_{k_0}}$ that contains at least two 2-cells mapping to distinct 2-cells $C, C'$ of $\chi(P)$ gives rise to a reduced disc diagram $D$ by removing one of the 2-cells, say $C'$. 
But then Greendlinger's Lemma implies that  $D$ contains at least 1 shell (which may be the only 2-cell of $D$), but then the intersection of $C'$ and this shell is a piece, contradiction the $C(6)$ condition. 
Thus, we may assume that $\sigma \rightarrow \bigcup_{j < k_0}  C_{v_j}$ is  essential. The image of  $\sigma$ in $\bigcup_{j < k_0} X_{v_j} \subset \bigcup_{j < k_0} C_{v_j}$ is covered by a collection of arcs in the $X_{v_j}$'s. By the Seifert Van-Kampen Theorem, the image of $\sigma$ cannot be covered by two contractible sets with connected intersection, so in particular, viewing $\sigma$ as a concatenation of two arcs $\tau'$ and $\tau''$ where $\tau'$ traverses a single $X_{v_{j_0}}$ with $1\leq j_0 < k_0$ and $\tau''$ traverses $\bigcup_{j < k_0, j \neq j_0} X_{v_j}$, it follows that the intersection $\bigcup_{j < k_0, j \neq j_0} X_{v_j} \cap X_{v_{j_0}}$ must be disconnected. 
Without loss of generality, we can  choose the $\tau, \tau'$ so that the $X_{V_{J_0}}$ traversed by $\tau'$ is maximal, thus contradicting the induction hypothesis. 

\textbf{Disconnected intersection:} Assume  that there exist vertices $x, y $ lying in distinct connected components of $\bigcup_{j < k_0} X_{v_j} \cap X_{v_{k_0}}$, and let $X_v, X_{v'}$ be the corresponding elements of $\mathcal{U}$ containing $x$ and $y$. 
Note that $x$ and $y$ are connected by a path $\tau$ in $\bigcup_{j < k_0} X_{v_j}$, since by the induction hypothesis, this union is path-connected. 
Let $I_\tau$ be such that $\bigcup_{I_\tau} X_{v_j}$ are the elements of $\mathcal{U}$ traversed by $\tau$, so $\bigcup_{I_\tau} X_{v_j}\cup X_{v_{k_0}}$ is a union of boundaries of $2$-cells in $\widetilde{\mathcal{X}}(P)$. Amongst all possible paths satisfying the above, choose $\tau$ so that $|I_\tau|$ is the least possible, so that $\bigcup_{I_\tau} X_{v_j}\cup X_{v_{k_0}}$  defines an annular diagram $A_\tau$ whose boundary paths are cycles in $\bigcup_{I_\tau} X_{v_j}\cup X_{v_{k_0}}$, and $A_\tau$ collars a reduced disc diagram $D_\tau$ in $\widetilde{\mathcal{X}}(P)$, as in Figure~\ref{fig:annulardisc}.  Choose $D_\tau$ with the fewest cells amongst all disc diagrams collared by $A_\tau$.

If $\area (D_\tau)=0$, then $D_\tau$ is  a tree. Note that if $D_\tau$ has branching, then removing an edge (or several edges corresponding to a piece) from $D_\tau$ corresponds to shortening $\tau$ by pushing it away from a pair $X_{v_j}, X_{v_{j'}}$ and towards $X_{v_{k_0}}$, as in Figure~\ref{fig:cases}; such a reduction contradicts the choices in the previous paragraph, so we may assume that $D_\tau$ is a possibly degenerate subpath of $X_{v_{k_0}}$ with $\partial D_\tau= \tau\tau^{-1}$. Since $\tau\tau^{-1} \subset \bigcup_{j < k_0} X_{v_j} \cap X_{v_{k_0}}$, this contradicts the hypothesis that $x, y $ lie in  distinct connected components of $\bigcup_{j < k_0} X_{v_j} \cap X_{v_{k_0}}$.

Hence, $\area (D_\tau)\geq 1$. We may assume, by performing the same reductions to $\tau$ as in the $\area (D_\tau)= 0$ case, that $\partial D_\tau$ has no spurs, so by Greendlinger's Lemma $D_\tau$ must have at least one shell; we  claim that $\partial S < X_{v_{k_0}}$  for every shell $S$ in $D_\tau$. Assuming the claim (see  Claim~\ref{clm:final}, which is proven below), we now explain how to finish the proof of Lemma~\ref{clm:contractibleinduction}. 
Note that $D_\tau \cup A_\tau$ is reduced by the choice of $\tau$ and $D_\tau$ (as otherwise we would be able to cancel a pair of cells and reduce the area of $D_\tau$).
Let $S$ be a shell  in $D_\tau$. Since $S$ is a shell, it intersects at least $3$ consecutive cells $C_1,C_2,C_3$ of $A_\tau$ as in the centre of  Figure~\ref{fig:induction}, and so, provided $C_2\neq X_{v_{j_0}}$, the path $\tau'$ obtained from $\tau$ by pushing across $C_2$ traverses at most as many cells as $\tau$, but bounds a disc diagram $D_{\tau'}$ with $\area (D_{\tau'})< \area (D_\tau)$, contradicting our initial choices. If $C_2= X_{v_{j_0}}$, then we may choose another shell $S'$ (since we have at least $3$) and run the same argument, and push across the corresponding cell $C_2'$. Since $C_2\neq C_2'$ then $X_{v_{j_0}}\neq C_2'$, so the argument works.

Thus, assuming the claim, we arrive at a contradiction in all cases, so  $\bigcup_{j < k_0} X_{v_j} \cap X_{v_{k_0}}$ must be contractible and the induction is complete. \end{proof}

\begin{figure}
\centerline{\includegraphics[scale=0.65]{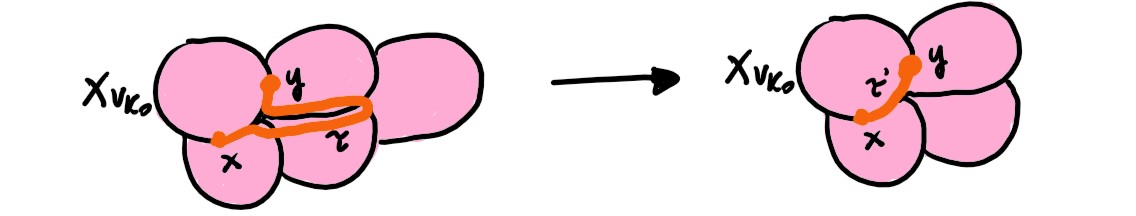}}
\caption{Possible reductions in the last part of the proof of Lemma~\ref{clm:contractibleinduction} when $\area (D_\tau)=0$ and $D_\tau$ has branching. }
\label{fig:cases}
\end{figure} 

\begin{figure}
\centerline{\includegraphics[scale=0.61]{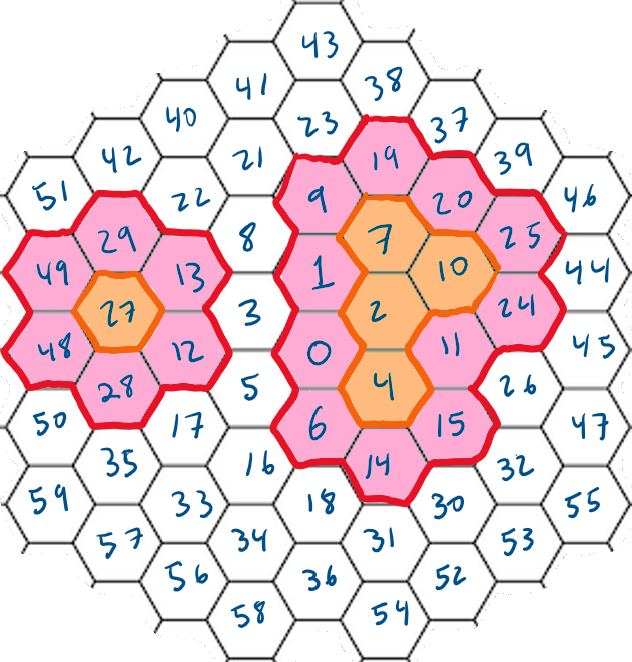}}
\caption{Some annuli collaring disc diagrams in the example from Figure~\ref{fig:orderingex}, and exhibiting the behaviour explained in Claim~\ref{clm:final}.}
\label{fig:orderingannuli}
\end{figure}

\begin{claim}\label{clm:final} Let $\mathcal{X}(P)$ be the presentation complex associated to a $C(6)$ small-cancellation presentation $P$, let $V=\{v_j\}_{j \in \naturals}$ be the vertices of its structure graph $\Lambda$, and for each $v_j \in V$, let $X_{v_j}$ be the corresponding element of $\mathcal{U}$.  Let $A_\beta$ be an annular diagram in $\widetilde{\mathcal{X}(P)}$ collaring a reduced disc diagram $D_\beta$ so that $\partial D_\beta= D_\beta\cap A_\beta=\beta$ and $D_\beta\cup A_\beta$ is reduced, and finally, let $X_{v_{k_0}}$ be the maximal element in the ordering of Definition~\ref{def:order} corresponding to the boundary of a $2$-cell in $A_\beta$. Then $\partial S < X_{v_{k_0}}$  for every shell $S$ in $D_\beta$.
\end{claim}

\begin{proof}[Proof of Claim]
 We prove the claim by induction on $\area (D_\beta)$. 
 For the base case, assume $\area (D_\beta)=1$, so $D_\beta=S$ is a single cell, and in fact, a shell. Consider the union $\bigcup_{j < k_0} X_{v_j}$.  Since $X_{v_{k_0}}$ is the next element in the ordering, then  $X_{v_{k_0}}$ lies in a  simplex $\sigma$ that contains a simplex $\sigma'$ which is least in the Lusin--Sierpi\'nski ordering, and there is a vertex in $\sigma -\sigma'$, namely $v_{k_0}$, that is not in $\bigcup_{j < k_0} X_{v_j}$. 
 Let $X_{v_{j_1}}< \ldots < X_{v_{j_m}} < X_{v_{k_0}}$ be the boundaries of the 2-cells in $A_\beta$. 
We first deal with the case where all of the vertices $v_{j_1}, \ldots, v_{j_m}$ lie in $\sigma$. 
Then, by Lemma~\ref{lem:hellyimpliesniceunion}, $\bigcup_{v \in\sigma}C_v$ is simply connected, where the $C_v$ are 2-cells of $\mathcal{X}(P)$ with $\partial C_v=X_v$, so any closed path $p$ in the image of $A_\beta$ bounds a reduced disc diagram $E$ in $\bigcup_{v \in\sigma}C_v \subset \widetilde{\mathcal{X}(P)}$ (note that $\bigcup_{v \in\sigma}C_v$ itself need not be a disc diagram). 
We may choose $p$ so that  $\partial S=\partial E$, and consider $E \cup S$. Then either $S$ forms a cancellable pair with a 2-cell of $E$, contradicting the hypothesis of the claim, or the small-cancellation condition is violated. Indeed, $E$ is either a single cell\footnote{We warn the reader that, a priori, we cannot conclude from this observation that $S = E$. 
Indeed, if a relator $r$ in the presentation $P$ is a proper power, then $E$ and $S$ could be different lifts of the same disc --  the disc corresponding to $r$ -- in $\mathcal{X}(P)$.} or has at least 2 shells $S', S''$ by Greendlinger's Lemma (since any spurs can be removed), and in either case, the intersection $E \cap S$ or $S' \cap S$ is a single piece by Lemma~\ref{lem:c6cont}, thus contradicting the $C(6)$ condition.

 
 We may thus assume that there exists a cell $C$ of $A_\beta$ with $\partial C =  X_{v_{j_i}}$ for some $1 \leq i \leq m$ such that $v_{j_i}$ does not lie in $\sigma$. Note that,  by Lemma~\ref{clm:helly} and Lemma~\ref{clm:maxints}, there are at least four 2-cells in  $A_\beta$, so there is a 2-cell $C'$ in $A_\beta$ that is not adjacent to $X_{v_{k_0}}$, and by hypothesis, $\partial C' <X_{v_{k_0}}$. Now, either  $C'$  has the same property as $C$, namely $\partial C'$  corresponds to a vertex that does not lie in $\sigma$, or $C$ is also non-adjacent to  $X_{v_{k_0}}$, by the definition of the ordering. Thus, we may assume that $C=C'$.  Since $S$ is adjacent to $C$, then again by the definition of the ordering, $\partial S< X_{v_{k_0}}$  since the vertex corresponding to $\partial S$ forms a simplex  with $v_{j_i}$, and $v_{k_0}$ does not form a simplex with  $v_{j_i}$, and it cannot form a simplex with any lesser element in the ordering, since if it did, $X_{v_{k_0}}$ would not correspond to the maximal element in the collar.

  Thus, the base case of the induction is complete.
 
Assuming the claim when $\area (D_\beta) < N$, let $D_\beta$ be a disc diagram as hypothesised and with $\area (D_\beta) =N$. By Greendlinger's Lemma, $D_\beta$ is either a single cell, a ladder, or has at least $3$ shells or spurs. Since we may assume that $N>1$, and any spurs on $\partial D_\beta$ can be removed to reduce the number of  cells in $D_\beta$, we may assume that $D_\beta$  has at least $2$ shells. Let $S_1, \ldots, S_n$ be the shells of $D_\beta$. Consider the diagram $D_{\beta'}$ obtained from $D_\beta$ by removing  $S_1$, so $D_{\beta'}$ is collared by the annular diagram $A_{\beta'}$ obtained from  $A_\beta$ by removing the cells in the outerpath of $\partial S_1$ and adding $S_1$.
Every shell of $D_\beta - S_1$ is  a shell of  $D_{\beta'}$, so by the induction hypothesis,  $\partial S_i < X_{v_{M'}}$ for each shell $S_i$ with $1<i\leq n$, where $X_{v_{M'}}$ is the maximal element in the ordering corresponding to the boundary of a $2$-cell in $A_{\beta'}$. 

It may be, a priori, that $X_{v_{M'}}=\partial S_1$; we explain now why this isn't the case. Repeating the same construction as above to obtain a disc diagram $D_{\beta''}$  collared by an annular diagram $A_{\beta''}$, but this time removing a shell $S_j\neq S_1$, the induction hypothesis again implies that  $\partial S_i < X_{v_{M''}}$ for each shell $S_i$ with $1\leq i\leq n$ and $i\neq j$ and  where $X_{v_{M''}}$ is defined analogously to $X_{v_{M'}}$, i.e., it is the maximal element in the ordering corresponding to the boundary of a $2$-cell in $A_{\beta''}$.
If $X_{v_{M''}}=S_j$, then  $\partial S_j < \partial S_1$ and $\partial S_j > \partial S_1$, which is impossible. 
Thus, either $X_{v_{M'}}\neq \partial S_1$ or $X_{v_{M''}}\neq \partial S_j$, and since $X_{v_M}$ is the boundary of a $2$-cell of either $A_{\beta'}$ or $A_{\beta''}$ -- as it can only be excluded when removing one of the shells --, then $\partial S_i < X_{v_{k_0}}$ for each shell $S_i$ with $1 \leq i\leq n$, and the induction is complete.
\end{proof}

\begin{figure}
\centerline{\includegraphics[scale=0.67]{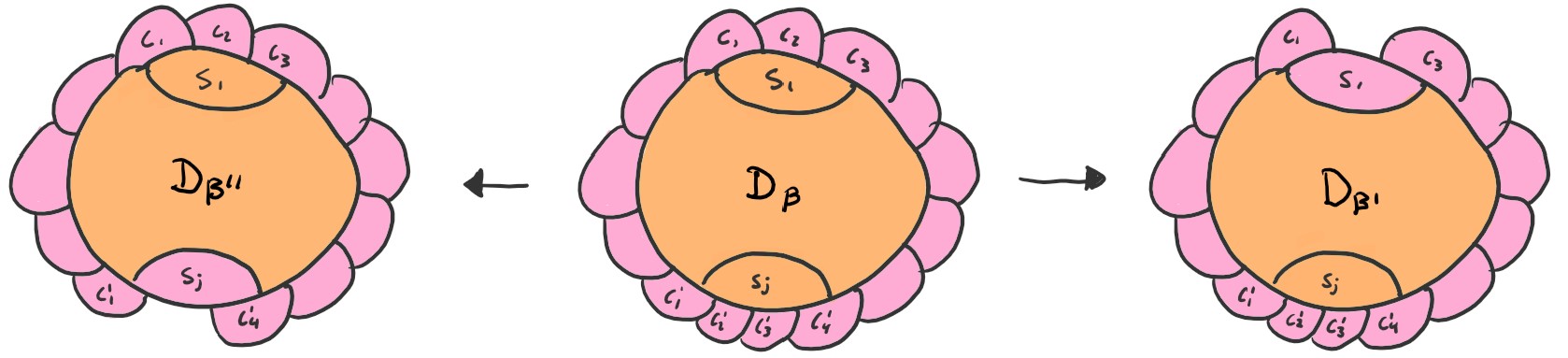}}
\caption{The reductions used in the final part of the proof of Lemma~\ref{clm:contractibleinduction} (replace $\beta$ with $\tau$ in the labelling) and in the inductive step of the proof of Claim~\ref{clm:final}.}
\label{fig:induction}
\end{figure}

We therefore conclude:

\begin{proof}[Proof of Theorem~\ref{thm:CLC6}]

By Lemma~\ref{lem:c6cont}, for each $k \in \naturals$, the intersection
$\bigcup_{j < k} X_{v_j} \cap X_{v_k}$ is contractible.
Abusing notation, let $e$ denote the basepoint of $\hat B_n$ (which is also the vertex representing the trivial element).
We prove, by induction on $k$, that this implies that there exist left coset representatives
$\{g_{v_1} \ldots, g_{v_k}\}$ for the left cosets of $N(\langle r_i\rangle)\nclose{r_1, \ldots, r_k}$ in $F(S)$  corresponding to the $X_{v_j}$ with $j \leq k$ such that   \begin{equation}\label{eq:cont proof}
    \pi_1 (\bigcup_{j \leq k} X_{v_j},e)=\langle \{r_i^{g_{v_j}}\}\rangle.
\end{equation}

For $k=1$, this is immediate since $X_{v_j}$ is a lift of some $r_i \in R$ based at $e$ of $\hat B_n$. We may thus take $g_{v_1}=e$ and the assertion follows.

Assume that~\eqref{eq:cont proof} holds for all $k \leq K$, and consider $\bigcup_{j \leq K} X_{v_j} \cup X_{v_{K+1}}$, where $X_{v_{K+1}}$ is a lift to $\hat B_n$ of the cycle  $c_{i_0}$ representing $r_{i_0} \in R$. 
Let $p $ be a point in the intersection  $\bigcup_{j \leq K} X_{v_j} \cap X_{v_{K+1}}$. Since this intersection is contractible,   then $\pi_1(\bigcup_{j \leq K} X_{v_j} \cup X_{v_{K+1}}, p)=\pi_1(\bigcup_{j \leq K} X_{v_j},p)\ast \pi_1(X_{v_{K+1}},p)$. Let $g\nclose{R}$ denote the left coset corresponding to  translating  $c_{i_0}$ to $X_{v_{K+1}}$. Since change of basepoint corresponds to conjugation, there is a $g_{v_{K+1}} \in g\nclose{R}$  such that

$$  \pi_1 (\bigcup_{j \leq K+1} X_{v_j},e)=\langle \{r_i^{g_{v_j}}\}_{i \in I, j \leq K}\cup\{r_{i_0}^{g_{v_{K+1}}}\}\rangle,$$

and the induction is complete.

Now since $\hat B_n$ is the infinite directed union 
$\hat B_n=\bigcup_{j \leq k,k \rightarrow \infty} X_{v_k}$, then $$\pi_1 (\hat B_n,e)=\lim_{k \rightarrow \infty}\pi_1 (\bigcup_{j \leq k} X_{v_k},e),$$ which finishes the proof of the theorem.
\end{proof}

\section{Appendix: the C(4)-T(4) and C(3)-T(6) cases
\\
\MakeLowercase{by} Macarena Arenas and Karol Duda}\label{appendix}

\begin{definition}\label{def:T}
 
Let $q$ be a natural number. We say that a $2$-complex $X$ satisfies the $T(q)$ \textit{small cancellation condition}  if there does not exist a reduced disc diagram $D \rightarrow X$ containing an internal vertex $v$ of valence $n$ where   $2<n<q$.
   
\end{definition}

We retain most of the notational conventions used in the main body of the paper.
One of the differences is in the definition of a shell.

\begin{definition}[$i$-shells]
An $i$-\emph{shell} of $D$ is a 2-cell $C \rightarrow D$
whose boundary path $\partial C \rightarrow D$ is a concatenation $qp_1 \cdots p_i$ for some $i \geq 1$
where $q$ is a boundary arc in $D$ and $p_1, \ldots , p_i$ are non-trivial pieces in the interior of $D$.
The arc $q$ is the \emph{outerpath} of $C$ and the concatenation $p_1 \cdots p_i$ is the \emph{innerpath} of $C$. 
\end{definition}

\subsection{The C(4)-T(4) case}

With that change both Greendlinger's Lemma and the Ladder Theorem hold also for $C(4)$--$T(4)$ complexes.

\begin{theorem}[Greendlinger's Lemma]\label{thm:greendlingerc4t4}
Let $X$ be a $C(4)$--$T(4)$ complex and $D \rightarrow X$ be a reduced disc diagram, then either
\begin{enumerate}
\item $D$ is a single cell,
\item $D$ is a ladder,
\item $D$ has at least three $1$-shells, $2$-shells and/or spurs.
\end{enumerate}
\end{theorem}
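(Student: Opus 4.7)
The plan is to adapt the combinatorial Gauss--Bonnet argument that proves Greendlinger's Lemma in the $C(6)$ setting (Theorem~\ref{thm:greendlingerc6}) to the $C(4)$--$T(4)$ regime, replacing the hexagonal angle assignment of $\pi/3$ by the square assignment of $\pi/2$. Concretely, for a reduced disc diagram $D \to X$ I would assign an angle of $\pi/2$ to every corner of every $2$-cell, where a corner is a vertex at which two pieces of a boundary cycle meet. The discrete Gauss--Bonnet formula then reads
$$ \sum_{F} \kappa(F) \;+\; \sum_v \kappa(v) \;=\; 2\pi\chi(D) \;=\; 2\pi, $$
with $\kappa(F) = (2 - n(F))\pi + n(F)\cdot \pi/2$ for a $2$-cell with $n(F)$ corners, and $\kappa(v) = 2\pi - m\cdot \pi/2$ (respectively $\pi - m\cdot \pi/2$) for an interior (respectively boundary) vertex at which $m$ corners meet.

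The first step is to verify that interior contributions are non-positive. The $C(4)$ hypothesis gives $n(F)\geq 4$ for every interior $2$-cell, so $\kappa(F) = 2\pi - n(F)\pi/2 \leq 0$; the $T(4)$ hypothesis, after suppressing valence-$2$ interior vertices that lie inside pieces, forces $m \geq 4$ at every interior corner-vertex, so $\kappa(v) \leq 0$. Hence all positive curvature must lie on $\partial D$ and sum to $2\pi$.

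The second step is a case analysis of positive boundary contributions. A direct computation using $n(F) = i + 1$ shows that an $i$-shell has $\kappa(F) = (3 - i)\pi/2$, which is strictly positive exactly when $i \in \{1,2\}$ (yielding $\pi$ for a $1$-shell and $\pi/2$ for a $2$-shell) and non-positive for $i \geq 3$. A spur contributes $\kappa = \pi$, while a boundary corner-vertex incident to a single $2$-cell contributes $\pi/2$; the latter can be charged to the adjacent shell via a standard bookkeeping scheme that absorbs convex boundary-vertex curvature into the outerpath of the next shell along $\partial D$. After this bookkeeping, the full positive budget of $2\pi$ is realised by spurs, $1$-shells, and $2$-shells alone.

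The trichotomy then follows by counting. Each such positive feature contributes at most $\pi$, so a total of $2\pi$ forces at least two of them. If $D$ is neither a single cell nor contains three or more such features, then exactly two features saturate the budget, every other $2$-cell must have $\kappa = 0$ and is therefore a $3$-shell (equivalently, a four-cornered cell with two boundary arcs), and these $\kappa = 0$ cells chain linearly between the two extremal features, giving precisely the cellular structure of a ladder. The main obstacle I expect lies in this last step: formally ruling out branching among the chain of zero-curvature interior cells requires a careful local analysis of how adjacent $3$-shells meet along $\partial D$, and is essentially parallel to establishing the $C(4)$--$T(4)$ version of the Ladder Theorem alongside, so the two results are most naturally proved together.
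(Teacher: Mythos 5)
You should first be aware that the paper contains no proof of this statement: Theorem~\ref{thm:greendlingerc4t4} is quoted as a classical fact (with proofs in \cite{Lyn66,McCammondWiseFanLadder}), and the only curvature-style argument the paper actually carries out in the $C(4)$--$T(4)$ setting is the refinement Theorem~\ref{thm:corners}, whose distributed-Euler-characteristic bookkeeping ($-\tfrac{1}{2}$ per edge-end, $\tfrac{1}{p}$ per corner of a $p$-gon) is, up to a factor of $2\pi$, exactly the combinatorial Gauss--Bonnet computation you propose with angles $\pi/2$. So your approach is the standard one and matches the spirit of what the paper does prove.

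As written, however, your sketch does not yet establish the stated trichotomy, for two reasons. First, the charging of positively curved boundary vertices is loose: a valence-$2$ boundary vertex with one corner is incident to a unique $2$-cell, but that cell need not be a shell (it may meet $\partial D$ in several arcs, or be an $i$-shell with $i\geq 3$), so ``absorb into the adjacent shell'' is not well-defined. The correct bookkeeping is that each such vertex's $+\pi/2$ cancels the extra $\pi/2$-corner it creates on its incident cell, so that after aggregation a boundary cell carries $(3-i)\pi/2$ if it is an $i$-shell and $\leq 0$ otherwise; equivalently, suppress \emph{all} valence-$2$ vertices, as the paper does in the proof of Theorem~\ref{thm:corners}, and also dispose of cut vertices, tree edges, and cells meeting $\partial D$ only in vertices or in several arcs. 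These are routine verifications, but they are exactly where such arguments go wrong if skipped. Second, and more seriously, the counting alone only yields ``$D$ is a single cell, or $D$ has at least two spurs, $1$-shells or $2$-shells'': in the exactly-two case the $2\pi$ budget forces two $\pi$-contributors (spurs or $1$-shells) with zero curvature everywhere else, but upgrading that to ``$D$ is a ladder'' -- ruling out branching and showing the zero-curvature cells form a linear chain of cells each separating $D$ -- is precisely the content of Theorem~\ref{thm:ladderc4t4}, which you explicitly defer. That structural step is the substantive part of the classical proof in \cite{McCammondWiseFanLadder}; until it is carried out, your argument proves a strictly weaker statement than the trichotomy, since without it one cannot separate alternative (2) from alternative (3).
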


\begin{theorem}[The Ladder Theorem]\label{thm:ladderc4t4}
Let $X$ be a $C(4)$--$T(4)$ complex and $D \rightarrow X$ be a reduced disc diagram. If $D$ has exactly two $1$-shells or spurs, then $D$ is a ladder.
\end{theorem}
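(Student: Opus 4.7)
The plan is to proceed by induction on $\text{Area}(D)$, following the template of the classical Ladder Theorem for $C(6)$ complexes (Theorem~\ref{thm:ladderc6}) but now using the $C(4)$--$T(4)$ version of Greendlinger's Lemma (Theorem~\ref{thm:greendlingerc4t4}). The base cases $\text{Area}(D) \leq 1$ are immediate: an area-zero reduced diagram with exactly two spurs is a single arc (a tree with exactly two leaves) and hence a ladder of $1$-cells, while a single $2$-cell is itself a ladder of length one.

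For the inductive step, assume $\text{Area}(D) \geq 2$ and that $D$ has exactly two features among $\{\text{$1$-shells}, \text{spurs}\}$, which I label $S_1$ and $S_2$. Applying Greendlinger's Lemma for $C(4)$--$T(4)$, $D$ is either a single cell (excluded by the area assumption), a ladder (in which case we are done), or $D$ contains at least three features among $\{\text{$1$-shells}, \text{$2$-shells}, \text{spurs}\}$; by hypothesis, the third case must contain at least one $2$-shell. The main step is to remove $S_1$ to obtain a smaller reduced disc diagram $D'$: if $S_1$ is a spur, I delete the valence-$1$ vertex and its incident edge; if $S_1$ is a $1$-shell with innerpath a single piece $p$ shared with a neighboring $2$-cell $C_1$, I delete the open $2$-cell of $S_1$ together with its outerpath, so that $p$ becomes a boundary arc of $D'$.

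The hard part will be verifying that $D'$ again has exactly two $1$-shells or spurs so that the induction applies. The $T(4)$ condition is essential here: it rules out the creation of valence-$3$ interior vertices after the removal, which would otherwise produce anomalous configurations breaking the count. Combined with the $C(4)$ lower bound on the number of pieces in $\partial C_1$, one argues that $C_1$ transitions from being a $2$-shell of $D$ into either a $1$-shell of $D'$ (when its former innerpath was exactly $p \cdot p'$ for another piece $p'$) or, in degenerate configurations, into a spur of $D'$, while $S_2$ is unaffected. By the induction hypothesis, $D'$ is then a ladder, and reattaching $S_1$ along $p$ at the appropriate end of $D'$ exhibits $D$ as a ladder, completing the induction.
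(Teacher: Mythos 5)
The step you yourself flag as ``the hard part'' is not an incidental verification: it is the entire content of the theorem, and as formulated it fails. Deleting the $1$-shell $S_1$ exposes the piece $p$ on the neighbouring cell $C_1$, but nothing forces $C_1$ to become a $1$-shell or a spur of $D'$: the $C(4)$ condition only bounds the number of pieces on $\partial C_1$ from \emph{below}, so $C_1$ may become a $2$-shell, a $3$-shell, or a cell meeting $\partial D'$ in two separate boundary arcs (hence not a shell at all), and then $D'$ need not have exactly two $1$-shells or spurs, so the inductive hypothesis cannot be invoked. The appeal to $T(4)$ is misdirected: removing a cell never creates interior vertices, it only turns interior vertices and edges into boundary ones, and $T(4)$ holds for $D'$ automatically because a subdiagram of a reduced diagram is reduced; it does none of the bookkeeping you need. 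Reduced $C(4)$--$T(4)$ diagrams can have very few $1$-shells -- in the standard square tiling for $\langle a,b \mid aba^{-1}b^{-1}\rangle$ a rectangular grid has no $1$-shells and no spurs at all (its corners are $2$-shells), and a grid with one square attached along a boundary edge has exactly one -- so the invariant ``exactly two $1$-shells or spurs'' is simply not preserved by your removal move. (Two smaller issues: deleting a spur does not decrease area, so your induction does not advance in that case; and even granting that $D'$ is a ladder, you assert without argument that $p$ sits at an \emph{end} of it, which also needs the counting you have not done.)

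Worse, the literal statement you set out to prove is false, so no repair of the sketch can close: take the $2\times 2$ grid of squares in that same tiling and attach one extra square along an outer edge on each of two opposite sides. This is a reduced disc diagram over a $C(4)$--$T(4)$ presentation with exactly two $1$-shells (the two attached squares), no spurs, and it is not a ladder, since most of its $2$-cells fail the disconnection property in the definition. The usable statement must count $2$-shells as well, i.e.\ the hypothesis should be ``exactly two $1$-shells, $2$-shells and/or spurs'', matching case (3) of Theorem~\ref{thm:greendlingerc4t4}; with that reading your peeling argument would additionally have to track which $2$-shells are created and destroyed, which the sketch does not attempt. Note also that the paper offers no proof to compare against: the Ladder Theorem is quoted as a standard result from the small-cancellation literature (\cite{McCammondWiseFanLadder}), where it is established by a combinatorial curvature/Euler-characteristic count in exactly the spirit of the proof of Theorem~\ref{thm:corners} in the appendix, not by an induction that peels off an end shell. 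If you want a self-contained argument, adapting that weighted Euler-characteristic computation -- showing that a spurless non-ladder diagram must accumulate at least three positively curved boundary features -- is the route that works.
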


For the proof of the analogue of Lemma \ref{clm:contractibleinduction} we need a slightly more refined version of Greendlinger's Lemma, giving us additional information about the shells.

\begin{theorem}\label{thm:corners}
Let $X$ be a $C(4)$--$T(4)$ complex and $D \rightarrow X$ be a reduced disc diagram. If $D$ does not contain spurs, then it is either a single cell, or contains a $1$-shell, or contains a $2$-shell in $D$ containing  a boundary vertex of $D$ of degree exactly $3$.
\end{theorem}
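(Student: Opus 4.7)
Plan: I would argue by contradiction, combining Greendlinger's Lemma with a combinatorial Gauss--Bonnet calculation. Suppose $D$ is not a single cell, has no spurs, contains no $1$-shell, and every $2$-shell has both boundary corners of valence $\geq 4$; the goal is to derive a contradiction.

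First, reduce to at least three $2$-shells. I would condense $D$ by collapsing each maximal path of valence-$2$ vertices into a single arc; afterwards every vertex has valence $\geq 3$, and by $T(4)$ every interior vertex has valence $\geq 4$. The ladder case of Theorem~\ref{thm:ladderc4t4} is incompatible with our hypotheses: if an end cell of a ladder were a $2$-shell, its innerpath would be two pieces meeting at a vertex $w$ whose only incident edges in $D$ come from that cell and its unique ladder neighbour, so $w$ has valence $2$ in $D$ and collapses under condensation, turning the end cell into a $1$-shell, contradicting the assumption. Hence by Theorem~\ref{thm:greendlingerc4t4}, $D$ contains at least three $1$-shells, $2$-shells, or spurs; the hypotheses force $s_2 \geq 3$ $2$-shells.

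Now run combinatorial Gauss--Bonnet with angle $\pi/2$ at every corner: $c(v)=(4-\mathrm{val}(v))\pi/2\leq 0$ at interior vertices, $c(v)=(3-\mathrm{val}(v))\pi/2\leq 0$ at boundary vertices (with equality precisely when $\mathrm{val}(v)=3$), and $c(C)=(4-m(C))\pi/2$, where $m(C)$ is the number of arcs of $\partial C$. The $C(4)$ condition gives $m(C)\geq 4$ for interior cells, and any cell with $k\geq 2$ boundary arcs satisfies $m(C)\geq 2k\geq 4$, since consecutive boundary arcs in the same cell must be separated by at least one piece (otherwise they would fuse into a single arc under condensation). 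Hence the only positive cell contributions come from $1$-shells (each $\pi$) and $2$-shells (each $\pi/2$); by hypothesis only the latter appear.

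The counting step completes the contradiction. Each $2$-shell contributes two corners on $\partial D$, and each boundary vertex is a corner of at most two $2$-shells (one per incident boundary edge), so the set $\mathcal{K}$ of boundary vertices serving as a $2$-shell corner satisfies $|\mathcal{K}|\geq s_2$; by hypothesis each $v\in\mathcal{K}$ has $\mathrm{val}(v)\geq 4$ and so $c(v)\leq -\pi/2$. Combining with combinatorial Gauss--Bonnet and the non-positivity of all other contributions,
\[
2\pi \;=\; \sum_v c(v) + \sum_C c(C) \;\leq\; -s_2\cdot \pi/2 + s_2\cdot \pi/2 \;=\; 0,
\]
a contradiction. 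The principal technical obstacle I foresee is handling any residual valence-$2$ boundary cut-vertices, meaning two cells meeting only at a single boundary vertex, that are not removed by condensation; these decompose $D$ across the cut vertex into strictly smaller reduced $C(4)$--$T(4)$ disc diagrams without spurs, and an induction on area applied to the pieces recovers the required shell.
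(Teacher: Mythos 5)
Your argument is essentially the paper's: the appendix proves this exact statement by distributing the Euler characteristic of a condensed diagram ($-1/2$ to each end of every edge, $1/p$ to each corner of a $p$-gon), which is precisely your all-angles-$\pi/2$ combinatorial Gauss--Bonnet, and it closes with the same count --- each $2$-shell's positive contribution is cancelled by its outerpath endpoints of degree $\geq 4$, each such vertex being shared by at most two $2$-shells, contradicting $\chi(D)=1$. The preliminary Greendlinger/ladder step is superfluous (your final inequality needs no lower bound on $s_2$), and the cut-vertex configurations you flag at the end are likewise passed over silently in the paper's estimate, so up to bookkeeping this is the same proof.
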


\begin{myproof}
    Assume that $\area(D)\geq 2$.
    Consider a complex $D'$ obtained from $D$ by removing all vertices of degree $2$, with the exception of two on the outerpath of every $1$-shell and a single one on the outerpath of every $2$-shell.
    Since $D'$ is a disc diagram, the Euler characteristic $\chi(D')$ is equal to $1$.
    We distribute the Euler characteristic from each edge as $\frac{-1}{2}$ to both of it ends, and from each cell $\frac{1}{p}$ to each of its $p$ boundary vertices.
    It is clear that the sum of the distributed Euler characteristic over all vertices is $1$.
    Let $v$ be an internal vertex of $D'$.
    Since $D$ is reduced and satisfies $T(4)$, the degree of $v$ is $n\geq 4$. The number of cells containing $v$ is also $n$. 
    Thus, the distributed Euler characteristic for $v$ is at most $1-\frac{n}{2}+\frac{n}{4}$ which for $n\geq 4$ is non-positive.
    Let $u$ be a boundary vertex of $D'$. 
    The degree of $u$ is $n\geq 2$ and it is greater by at least $1$ than number of cells containing $u$.
    Thus, the distributed Euler characteristic for $u$ is at most $1-\frac{n}{2}+\frac{n-1}{4}$, where $n$ is degree of $u$.
    The only positive value it can have is $\frac{1}{4}$ which happens exactly when $n=2$ and $u$ is contained in a cell. 
    This happens for boundary vertices of degree two in $1$-shells and $2$-shells.
    Since the sum is $1$ and all other values are non-positive, there are multiple cells that are either a $1$-shell or a $2$-shell in $D$.
    Moreover observe that if all of these cells are $2$-shells, then some of them have a boundary vertex of degree exactly $3$. 
    Indeed, otherwise each of these $2$-shells has two boundary vertices of degree at least $4$. The distributed Euler characteristic of these vertices is at most $1-\frac{n}{2}+\frac{n-1}{4}$ which is at most $\frac{-1}{4}$. Since each such vertex can be shared by at most two $2$-shells, the sum of the distributed Euler characteristic is at most $0$, a contradiction.  
\end{myproof}

Note that Lemma \ref{lem:c6cont} is true for the $C(4)$--$T(4)$ condition as well.

\begin{lemma}\label{lem:c4t4cont}
Let $X$ be a simply-connected $C(4)$--$T(4)$ small-cancellation complex. Let $C_1,C_2$ be 2-cells of $X$. Then  either $\partial C_1 =\partial C_2$, or $ C_1\cap  C_2=\emptyset$, or $ C_1\cap  C_2$ is contractible.
\end{lemma}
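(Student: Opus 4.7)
The plan is to adapt the strategy of Lemma~\ref{lem:c6cont} to the $C(4)$--$T(4)$ setting, using Theorem~\ref{thm:greendlingerc4t4} and Theorem~\ref{thm:ladderc4t4} in place of their $C(6)$ counterparts. Assume for contradiction that $C_1 \cap C_2$ is nonempty and disconnected. Choose $a, b$ in distinct components minimising $\dist(a, b)$, take locally geodesic paths $\alpha \to \partial C_1$ and $\beta \to \partial C_2$ with endpoints $a, b$, and let $D$ be a minimal-area disc diagram bounded by $\alpha\beta^{-1}$. The auxiliary diagram $D_+ := D \cup_\alpha C_1 \cup_\beta C_2$ is reduced by the minimal-area argument from the proof of Lemma~\ref{lem:c6cont}, so every $2$-cell of $D$, viewed as a cell of $D_+$, is interior in $D_+$ and by the $C(4)$ condition its boundary must decompose into at least $4$ pieces.

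Apply Theorem~\ref{thm:greendlingerc4t4} to $D$. The single $0$-cell case forces $a = b$; the single $2$-cell case gives $\partial D = \alpha\beta^{-1}$, a decomposition into $2$ pieces (one shared with $C_1$, one with $C_2$), contradicting $C(4)$ in $D_+$; and the ladder case has end cells that are either $1$-cells (producing spurs on $\partial D$) or $2$-cells, which must then be $1$-shells whose boundary decomposes into at most $3$ pieces (at most $2$ from the outerpath plus $1$ from the innerpath), again contradicting $C(4)$ in $D_+$.

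The remaining case is when $D$ contains at least three $1$-shells, $2$-shells, or spurs. Spurs in the interior of $\alpha$ or $\beta$ contradict local geodesicity, and spurs at $a$ or $b$ force $\alpha$ and $\beta$ to share an initial edge lying in $C_1 \cap C_2$, producing a strictly closer pair of vertices in distinct components of $C_1\cap C_2$ and contradicting the choice of $a, b$. Any $1$-shell has boundary decomposing into at most $3$ pieces, ruled out as above. A $2$-shell whose outerpath lies entirely in $\alpha$ (resp.\ $\beta$) has outerpath equal to a single piece shared with $C_1$ (resp.\ $C_2$) and innerpath of $2$ pieces, so its boundary decomposes into $3$ pieces, still violating $C(4)$ in $D_+$. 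The only surviving possibility is that the three shells are all $2$-shells whose outerpaths \emph{straddle} $a$ or $b$, meaning they contain one of these vertices in the interior of the outerpath. But the outerpaths of distinct shells are pairwise disjoint arcs of $\partial D$, so at most one such shell can straddle $a$ and at most one can straddle $b$, contradicting the count of three.

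I expect the main obstacle to be this final counting argument: a straddling $2$-shell attains exactly $2+2=4$ pieces and therefore cannot be eliminated by a direct piece count, so the contradiction must instead come from the combinatorial rigidity of how shells' boundary arcs can sit on $\partial D$. Having exhausted all Greendlinger cases, $C_1 \cap C_2$ is connected, and the conclusion that it is either contractible (a single piece) or (up to backtracks) all of $\partial C_1 = \partial C_2$ follows exactly as in Lemma~\ref{lem:c6cont}.
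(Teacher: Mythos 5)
Your proposal is correct and follows essentially the same route as the paper: the paper states Lemma~\ref{lem:c4t4cont} without a separate proof, and the intended adaptation of Lemma~\ref{lem:c6cont} is spelled out in the discussion of Lemma~\ref{clm:maxintsc4t4}, where the same piece counts appear ($1$-shells and shells with outerpath inside $\alpha$ or $\beta$ give at most $3$ pieces, violating $C(4)$) together with exactly your final counting argument that at most two $2$-shells can have $a$ or $b$ interior to their outerpaths, against Greendlinger's ``at least three.'' Your identification of the straddling $2$-shell (reaching exactly $4$ pieces) as the genuine obstacle, resolved by the combinatorics of boundary arcs rather than by piece counts, is precisely the point the appendix makes.
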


Analogues of Lemmas \ref{clm:helly} and $\ref{clm:maxints}$ exist for $C(4)$--$T(4)$ complexes. The Helly property for triple intersections can be proved using induction and Greendlinger's Lemma, and can also be found in \cite[Proposition 3.7]{hoda2019quadric}. Note that this result combined with Lemma \ref{lem:c4t4cont} is enough to run the proof in~\cite{OP18}. While this proof is phrased for the $C(6)$ case, it only uses the fact that any counterexample to the Helly Property would be a disc diagram with boundary consisting of at most $3$ pieces, which contradicts the $C(4)$ condition as well. We thus obtain:

\begin{lemma}\label{clm:hellyc4t4}
Let $\mathcal{X}(P)$ be the presentation complex associated to a $C(4)$--$T(4)$ small-cancellation presentation $P$, let $\Lambda$ be its structure graph, and let $V' \subset V(\Lambda)$ with $|V'|<\infty$. If each pairwise intersection $ X_v \cap X_v'$ with $v,v' \in V'$ is non-empty, then the total intersection $\bigcap_{v \in V'}X_v$ is non-empty. 
\end{lemma}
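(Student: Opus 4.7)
The plan is to argue by induction on $|V'|$, closely mirroring the proof of Lemma \ref{clm:helly} in the $C(6)$ setting. The base case $|V'| = 2$ is immediate from the hypothesis, and so the real content lies in the triple-intersection case and the passage to larger subsets.

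For $|V'| = 3$, I would suppose towards a contradiction that the three pairwise intersections are non-empty while $X_{v_1} \cap X_{v_2} \cap X_{v_3} = \emptyset$. Select witnesses $p_{ij} \in X_{v_i} \cap X_{v_j}$ and join them by arcs $\gamma_i \subset X_{v_i}$ from $p_{ij}$ to $p_{ik}$, producing the boundary path of a reduced disc diagram $D \to \widetilde{\mathcal{X}(P)}$ of minimal area amongst such choices. Since each $\gamma_i$ lies on the boundary of a single $2$-cell, the path $\partial D$ is a concatenation of at most $3$ pieces. Applying Theorem \ref{thm:greendlingerc4t4}, $D$ is either a single cell (contradicting the assumed emptiness), a ladder, or contains a $1$-shell or $2$-shell. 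For any such shell, the outer path is a subarc of $\partial D$, hence at most a single piece, while the inner path is at most $2$ pieces; the boundary of the shell is therefore the concatenation of at most $3$ pieces, contradicting the $C(4)$ condition. Spurs are precluded by the cyclic reducedness of $P$ and the minimality of $D$.

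For the inductive step with $|V'| = k \geq 4$, I would either invoke \cite[Proposition 3.7]{hoda2019quadric} directly, or replay the strategy of \cite[Lemma 6.11]{OP18}: assuming all proper subintersections are non-empty by the induction hypothesis, any hypothetical witness to emptiness of the full intersection produces a reduced disc diagram whose boundary path is again a concatenation of at most $3$ pieces (traversing three of the $X_{v_i}$'s), which is ruled out by the same shell analysis as in the triple case. Lemma \ref{lem:c4t4cont} provides the contractibility of pairwise intersections needed to make the choice of triangular paths consistent.

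The main obstacle is ensuring that the geometric piece-count in the $C(4)$-$T(4)$ case matches the shell structure provided by the refined Greendlinger lemma. Since Theorem \ref{thm:greendlingerc4t4} guarantees only $1$-shells and $2$-shells in $C(4)$-$T(4)$ diagrams (rather than the $3$-shells permitted in $C(6)$), the inner-path contribution drops from $3$ pieces to $2$, which is exactly what is needed for the contradiction with $C(4)$. Thus no new ideas beyond those already present in the $C(6)$ argument are required; only the book-keeping of piece-counts needs to be adjusted.
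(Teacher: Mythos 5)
Your overall route (base case trivial, triple-intersection case via a reduced disc diagram plus Greendlinger, then induction or a citation of \cite[Proposition 3.7]{hoda2019quadric} and the argument of \cite{OP18}) is the same one the paper gestures at, but your hand-made triple-intersection argument has a genuine gap at the corners. The boundary of your diagram $D$ is a concatenation $\gamma_1\gamma_2\gamma_3$ with three corner vertices $p_{12},p_{23},p_{13}$, and a $2$-shell whose outerpath contains one of these corners as an interior vertex has outerpath consisting of \emph{two} pieces (a terminal subarc of $\gamma_i$ followed by an initial subarc of $\gamma_j$), not one. Together with an innerpath of at most two pieces this gives a boundary that is a concatenation of four pieces, which is perfectly compatible with the $C(4)$ condition; so your claimed contradiction evaporates exactly in the corner case. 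Moreover, the trick used in the paper's discussion of Lemma~\ref{clm:maxintsc4t4} (``only two of the at least three shells can contain $x$ or $y$'') does not rescue you here, because in the Helly configuration there are \emph{three} corners, so all of the shells or spurs guaranteed by Theorem~\ref{thm:greendlingerc4t4} could a priori be corner $2$-shells. The missing ingredient is the $T(4)$ condition, which your proposal never invokes: for instance, attaching $C_i$ and $C_j$ to $D$ along $\gamma_i,\gamma_j$ turns the corner $p_{ij}$ into an interior vertex of valence $3$ of a reduced diagram when the corner shell is the only $2$-cell of $D$ at $p_{ij}$, contradicting $T(4)$; alternatively one uses the refined statement of Theorem~\ref{thm:corners} or simply quotes \cite[Proposition 3.7]{hoda2019quadric} for the triple case, as the paper does. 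Your closing remark that ``no new ideas are required, only book-keeping'' is therefore precisely where the argument fails: in passing from $C(6)$ to $C(4)$--$T(4)$ the slack in the piece count disappears ($2+3=5<6$ becomes $2+2=4\not<4$), and $T(4)$ must do real work.

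Two smaller points. First, you must also rule out the degenerate possibility that a shell of $D$ has the same boundary as one of the cells $C_i$ (then its ``outerpath'' is not a piece at all); the paper's $C(6)$ arguments do this via minimality of the configuration and pushing the offending cell out of the diagram, and the same care is needed here. Second, some $v\in V'$ may correspond to an untethered component rather than a $2$-cell, in which case the side $\gamma_i$ is not a subpath of the boundary of any $2$-cell and the piece-counting must be rephrased (edges of an untethered component lie on no $2$-cell, which forces spurs or degeneracy along that side); this case should at least be acknowledged before running the shell analysis.
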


We can improve the conclusion of Lemma~\ref{clm:hellyc4t4} to deduce contractibility of the total intersection, as in the $C(6)$ case. A version of this result follows from putting together Propositions 3.5 to 3.8 in~\cite{hoda2019quadric}.

\begin{lemma}\label{clm:maxintsc4t4}
Let $\mathcal{X}(P)$ be the presentation complex associated to a $C(4)$--$T(4)$ small-cancellation presentation $P$ and let $\Lambda$ be its structure graph. For all $I \subset V(\Lambda)$ with $\infty >|I|\geq 2$, the intersection $\bigcap_{v \in I} X_v $ is non-empty if and only if it is connected (and hence contractible). 
\end{lemma}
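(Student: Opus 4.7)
The plan is to mirror the proof of Lemma \ref{clm:maxints}, inducting on $|I|$ and using the $C(4)$--$T(4)$ analogues of Greendlinger's Lemma (Theorem \ref{thm:greendlingerc4t4}) and the Ladder Theorem (Theorem \ref{thm:ladderc4t4}), together with the finer information provided by Theorem \ref{thm:corners}. The base case $|I|=2$ follows immediately from Lemma \ref{lem:c4t4cont}. For the inductive step, I would assume the result for intersections of cardinality at most $k$, suppose for contradiction that $\bigcap_{v \in I \cup \{v_{k+1}\}} X_v$ is non-empty and disconnected, and pick vertices $x, y$ in distinct connected components. Build a path $\alpha \to \bigcap_{v \in I} X_v$ (which is a tree by the induction hypothesis) and a path $\beta \to X_{v_{k+1}}$ joining $x, y$, of minimal length, and bounding a reduced minimal-area disc diagram $D$ in $\widetilde{\mathcal{X}}(P)$. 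As in the $C(6)$ case, the goal is to show $D$ is degenerate, forcing $\alpha = \beta$ and hence the desired contradiction.

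Assuming $D$ has positive area, the minimality of $\alpha$ and $\beta$ rules out spurs on $\partial D$, so Theorem \ref{thm:greendlingerc4t4} produces an $i$-shell $S$ in $D$ with $i \in \{1,2\}$. The piece-counting is cleanest in the extended diagram $D^+ = D \cup_\alpha C_v \cup_\beta C_{v_{k+1}}$, where $C_v, C_{v_{k+1}}$ are the $2$-cells whose boundaries are $X_v, X_{v_{k+1}}$; in $D^+$, the outerpath of $S$ becomes interior and decomposes into pieces via Lemma \ref{lem:c4t4cont}. If the outerpath $q$ of $S$ lies entirely in $\alpha$ or $\beta$, then $q$ is a single piece and $\partial S$ is a concatenation of $i+1 \leq 3$ pieces, contradicting $C(4)$. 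If $q$ crosses a corner $x$ or $y$ as $q = q_1 q_2$ with $q_1 \subset \alpha$, $q_2 \subset \beta$ (each a piece), and $S$ is a $1$-shell, then $\partial S$ again has at most $3$ pieces and $C(4)$ is violated. The only surviving case is a $2$-shell whose outerpath crosses $x$ or $y$: here $\partial S$ has $2+2 = 4$ pieces, which is consistent with $C(4)$.

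Since at most two such corner $2$-shells can coexist in $D$ (one at each of $x,y$), any diagram with at least three shells automatically contains a shell of contradicted type. By Theorem \ref{thm:ladderc4t4}, the only remaining possibility is that $D$ is a ladder with exactly two shells, both $2$-shells with outerpaths crossing $x$ and $y$ respectively. To dispatch this residual configuration I would invoke Theorem \ref{thm:corners}: since $D$ contains no $1$-shell, one of the two corner $2$-shells must contain a boundary vertex $w$ of $D$ of degree exactly $3$. The unique interior edge at $w$ should enable a local modification of the diagram (a push-across of $\alpha$ or $\beta$ through the offending $2$-shell, using the degree-$3$ corner to reroute the path) that strictly decreases either the area of $D$ or the length of $\alpha$ or $\beta$, contradicting the minimal choices. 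The main obstacle is precisely this final step: because four pieces are allowed by $C(4)$ alone, the $T(4)$ hypothesis --- harnessed through Theorem \ref{thm:corners} --- is essential, and the crux is converting the existence of the degree-$3$ boundary vertex $w$ into an explicit diagram reduction that breaks minimality.
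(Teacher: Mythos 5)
Your argument agrees with the paper's up to the piece-counting: $1$-shells, and any shell whose outerpath lies entirely in $\alpha$ or $\beta$, have boundary a concatenation of at most $3$ pieces, contradicting $C(4)$, so only $2$-shells whose outerpath crosses a corner $x$ or $y$ survive, and a diagram with at least three shells must contain an off-corner one. The gap is in your final case. You claim the remaining possibility is a ladder whose two shells are corner $2$-shells, and you propose to eliminate it via Theorem~\ref{thm:corners} and a ``push-across'' at a degree-$3$ boundary vertex --- but you never construct that reduction, and as set up it is not clear one exists: unlike in the proof of Lemma~\ref{clm:contractibleinductionc4t4}, there is no collaring annular diagram here, so there are no cells $C_1,C_2,C_3$ for $\alpha$ or $\beta$ to be pushed across; the only $2$-cells adjacent to $\partial D$ from outside are the two cells bounded by $X_v$ and $X_{v_{k+1}}$, and the existence of a degree-$3$ vertex on $\partial D$ does not by itself produce a smaller-area diagram or shorter paths. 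So the residual configuration is left genuinely unproven.

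In fact that configuration is vacuous, which is how the paper closes the argument without invoking Theorem~\ref{thm:corners} at all: in a reduced spurless ladder the end $2$-cell $C_1$ meets only the adjacent cell $C_2$, and $C_1\cap C_2$ is a connected arc (a disconnected intersection would enclose a planar region that no cell of the ladder could fill); since the diagram is reduced, that arc is a single piece, so $C_1$ is a $1$-shell. Hence, with spurs excluded, Greendlinger's Lemma yields either a $1$-shell or at least three $2$-shells, and your own counting (a $1$-shell has at most $2+1=3$ pieces even when its outerpath crosses $x$ or $y$) finishes the proof --- this is exactly the paper's argument. Replacing your last step with this observation (and noting that a single-cell $D$ has boundary a concatenation of at most two pieces, hence is also impossible) makes the proposal complete; Theorem~\ref{thm:corners} is only needed later, in the analogue of Lemma~\ref{clm:contractibleinduction}, where the collar $A_\tau$ supplies the cells to push across.
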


The proof of  Lemma~\ref{clm:maxintsc4t4} follows the proof of Lemma~\ref{clm:maxints}. The difference is in the proof of the fact that $D^+$ is a degenerate disc diagram.

Like in the previous proof, note that the choices of $\alpha$ and $\beta$ imply that $\partial D$ cannot contain any spurs, as such spurs could be removed to shorten $\alpha$ and/or $\beta$.
Thus by Greendlinger's Lemma $D$ must contain either a $1$-shell $S_1$, or at least three $2$-shells $S_2,S_3,S_4$. The outerpath of either of them  is either a subpath of $\alpha$, a subpath of $\beta$, or contains either $x$ or $y$ as an internal vertex of the path. 
In the first and second cases, $\partial S_i \cap \partial D$ is a single piece, and the innerpath of $S_i$ is the concatenation of at most $2$ pieces, so $\partial S$ is the concatenation of at most $3$ pieces, contradicting the $C(4)$ condition. 
In the third case, $\partial S_i \cap \partial D$ is the concatenation of at most $2$ pieces. In the case of a $1$-shell $S_1$ the innerpath consist of exactly a single piece, so $\partial S_1$ is the concatenation of at most $3$ pieces, contradicting the $C(4)$ condition. A $2$-shell containing either $x$ or $y$ can exist without contradicting $C(4)$, but then there are at least three $2$-shells $S_1, S_2, S_3$ in $D$, while only two of them can contain either $x$ or $y$ as an internal vertex. Thus there is at least one $2$-shell with an outerpath being either a subpath of $\alpha$, a subpath of $\beta$, yielding again a contradiction.

Lemma~\ref{lem:hellyimpliesniceunion} also holds in this case:

\begin{lemma}\label{lem:hellyimpliesniceunion4}
Let $\mathcal{X}(P)$ be the presentation complex associated to a $C(4)-T(4)$ presentation $P$ and let $\Lambda$ be its structure graph. Let $\sigma$ be a simplex of $\mathbf{N}(\mathcal{U})$ and let $Y_\sigma$  be the subcomplex of $\mathcal{X}(P)$ consisting of all the 2-cells whose boundary corresponds  to vertices in $\sigma$. Then $Y_\sigma$ is simply connected.
\end{lemma}

We use the results collected above to deduce an analogue of Claim~\ref{clm:final}.

\begin{claim}\label{clm:finalc4t4} 
 Let $\mathcal{X}(P)$ be the presentation complex associated to a $C(4)$--$T(4)$ small-cancellation presentation $P$, let $V=\{v_j\}_{j \in \naturals}$ be the vertices of its structure graph $\Lambda$, and for each $v_j \in V$, let $X_{v_j}$ be the corresponding element of $\mathcal{U}$.  Let $A_\beta$ be an annular diagram in $\widetilde{\mathcal{X}(P)}$ collaring a reduced disc diagram $D_\beta$ so that $\partial D_\beta= D_\beta\cap A_\beta=\beta$, and finally, let $X_{v_{k_0}}$ be the maximal element in the ordering of Definition~\ref{def:order} corresponding to the boundary of a $2$-cell in $A_\beta$. Then $\partial S < X_{v_{k_0}}$  for every shell $S$ in $D_\beta$.
\end{claim}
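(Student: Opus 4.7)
The plan is to adapt the proof of Claim~\ref{clm:final} from the $C(6)$ setting essentially verbatim, replacing each invocation of a $C(6)$ tool by its $C(4)$--$T(4)$ counterpart: Greendlinger's Lemma (Theorem~\ref{thm:greendlingerc4t4}), the Ladder Theorem (Theorem~\ref{thm:ladderc4t4}), the corner refinement (Theorem~\ref{thm:corners}), and the Helly statements (Lemmas~\ref{clm:hellyc4t4} and~\ref{clm:maxintsc4t4}). Throughout, ``shell'' is interpreted as ``$1$-shell or $2$-shell'' since these are the boundary features produced by Greendlinger in this setting. The induction is on $\area(D_\beta)$, as before.

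For the base case $\area(D_\beta)=1$, the diagram $D_\beta=S$ is a single $2$-cell, and I would proceed as in the $C(6)$ base case. Listing the $2$-cell boundaries of $A_\beta$ in order as $X_{v_{j_1}}<\cdots<X_{v_{j_m}}<X_{v_{k_0}}$, and letting $\sigma$ be the least simplex of $\mathbf{N}(\mathcal{U})$ produced by the ordering at $v_{k_0}$, the argument splits on whether every $v_{j_i}$ lies in $\sigma$. If so, Lemmas~\ref{clm:hellyc4t4} and~\ref{clm:maxintsc4t4} provide a contractible total intersection, and a boundary component of $A_\beta$ bounds a reduced disc diagram $E$ with $2$-cells from $\bigcup_{v\in\sigma}X_v$. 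Either $\partial S=\partial E$, forcing $E$ to be a single cell (else Greendlinger in $E$ would produce a $1$- or $2$-shell whose outerpath is a single piece, contradicting $C(4)$) and hence $\partial S<X_{v_{k_0}}$; or I would take a minimal area disc diagram in $E\cup A_\beta$ bounded by $\partial S$ and again apply Greendlinger to extract a $1$- or $2$-shell whose outerpath is a single piece, contradicting $C(4)$. When some $v_{j_i}\notin\sigma$, I would enlarge $\sigma$ to a simplex $\theta^+$ absorbing the relevant $v_{j_i}$, just as in the $C(6)$ proof, and repeat.

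For the inductive step with $\area(D_\beta)=N>1$, I would first remove any spurs from $\partial D_\beta$ and then apply Theorem~\ref{thm:greendlingerc4t4} to obtain at least three $1$- or $2$-shells. Removing one such shell $S_1$ yields a smaller diagram $D_{\beta'}$ collared by a modified annular diagram $A_{\beta'}$, to which the induction hypothesis applies: every remaining shell of $D_{\beta'}$ has boundary less than $X_{v_{M'}}$, the new maximum. To rule out $X_{v_{M'}}=\partial S_1$, I would repeat the procedure with a different shell $S_j\neq S_1$, which exists thanks to the three-shell conclusion of Theorem~\ref{thm:greendlingerc4t4}, and derive the impossibility $\partial S_j<\partial S_1<\partial S_j$.

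The main obstacle I foresee is the base case under $C(4)$--$T(4)$, where a $2$-shell's outerpath may be only a concatenation of two pieces, narrowing the gap that the $C(6)$ proof exploited. The Greendlinger contradictions still go through because a shell whose outerpath is a single piece violates $C(4)$ regardless of whether it is a $1$- or $2$-shell, but one must check that the cell-surgery step actually produces such a shell. This is where Theorem~\ref{thm:corners} enters: when the auxiliary diagram $E$ (or the minimal area diagram in $E\cup A_\beta$) contains only $2$-shells, it still furnishes one with a degree-$3$ boundary vertex, which is enough to guarantee a single-piece outerpath after surgery and preserve the $C(4)$ contradiction. Verifying this carefully, and checking that the ladder alternative in Theorem~\ref{thm:greendlingerc4t4} is absorbed by handling its end cells as $1$-shells, is the technical heart of the argument.
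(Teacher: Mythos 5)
Your proposal is correct and takes essentially the same route as the paper, which proves Claim~\ref{clm:finalc4t4} by running the argument of Claim~\ref{clm:final} verbatim with Lemma~\ref{lem:c4t4cont}, Lemma~\ref{clm:hellyc4t4} and Lemma~\ref{clm:maxintsc4t4} substituted for their $C(6)$ counterparts, the key point being exactly the one you isolate: a $1$-shell or $2$-shell whose outerpath is a single piece already violates the $C(4)$ condition. Your additional appeal to Theorem~\ref{thm:corners} is superfluous here (the paper reserves it for Lemma~\ref{clm:contractibleinductionc4t4}), since the single-piece outerpath of the extracted shell follows directly from the fact that it lies on $\partial S$ together with Lemma~\ref{lem:c4t4cont}, but this does not affect correctness.
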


\begin{myproof}
The proof is analogous to that of Claim~\ref{clm:final}, except that we substitute Lemma~\ref{lem:c6cont}, Lemma~\ref{clm:helly},  Lemma~\ref{clm:maxints}, and Lemma~\ref{lem:hellyimpliesniceunion} with their $C(4)-T(4)$ analogues -- Lemma~\ref{lem:c4t4cont},   Lemma~\ref{clm:hellyc4t4},  Lemma~\ref{clm:maxintsc4t4},and Lemma~\ref{lem:hellyimpliesniceunion4}. The key small-cancellation point of the argument  is that in a reduced disc diagram $D \rightarrow \mathcal{X}(P)$, the outerpath of a shell cannot be a single piece, and this is also true for $1$-shells and $2$-shells in the present case, as well  as in the case of $D$ being one  cell.
\end{myproof}

Finally, the $C(4)-T(4)$ version of Theorem~\ref{thm:CLC6intro} follows verbatim from the proof of Theorem~\ref{thm:CLC6} once we have the analogue of Lemma \ref{clm:contractibleinduction}.

\begin{lemma}\label{clm:contractibleinductionc4t4} Let $\mathcal{X}(P)$ be the presentation complex associated to a $C(4)$--$T(4)$ small-cancellation presentation $P$, let $V=\{v_j\}_{j \in \naturals}$ be the vertices of its structure graph $\Lambda$, and for each $v_j \in V$, let $X_{v_j}$ be the corresponding element of $\mathcal{U}$. For each $k \in \naturals$, the intersection
$$\bigcup_{j < k} X_{v_j} \cap X_{v_k}$$ is contractible.
\end{lemma}

\begin{myproof}

The proof is by induction on $k$, and it follows closely the proof of  Lemma \ref{clm:contractibleinduction}. As in that proof, for the inductive step we assume that $\bigcup_{j < k} X_{v_j} \cap X_{v_k}$ is contractible for  all $k < k_0$, and we need to show that $\bigcup_{j < k_0} X_{v_j} \cap X_{v_{k_0}}$ is contractible. We assume towards a contradiction that it is not, so $\bigcup_{j < k_0} X_{v_j} \cap X_{v_{k_0}}$ is either disconnected, or connected but not simply-connected.

 The non-simply-connected case is precluded by the induction hypothesis by the Seifert Van-Kampen Theorem (note that the argument given in Lemma~\ref{clm:contractibleinduction} does not use small-cancellation in any explicit way).

Now if  the intersection $\bigcup_{j < k_0} X_{v_j} \cap X_{v_{k_0}}$  is disconnected, again we follow the strategy of the proof in the $C(6)$ case and, retaining the notation in that proof, let $D_\tau$ denote a reduced disc diagram collared by a chain of cells $\bigcup_{j < k_0} X_{v_j}$ and by $X_{v_{k_0}}$. 
The contradiction is now reached by considering the area of $D_\tau$. The case of $\area (D_\tau)= 0$ is straightforward and identical to the $C(6)$ case. The differences are in the case where $\area (D_\tau)\geq 1$ (specifically, in the second to last paragraph in the proof Lemma~\ref{clm:contractibleinduction}). We explain this part of the argument in detail.

If $\area (D_\tau)\geq 1$ we may assume, by performing the same reductions to $\tau$ as in the $\area (D_\tau)= 0$ case, that $\partial D_\tau$ has no spurs, so by Theorem \ref{thm:corners}  $D_\tau$ contains a cell $S$ which either is the only cell of $D_\tau$, or is a $1$-shell or a $2$-shell containing a vertex of degree $3$ in $\partial S\cap \partial D$. Claim~\ref{clm:finalc4t4} shows that $\partial S < X_{v_{k_0}}$ in $D_\tau$. To finish the proof of Lemma~\ref{clm:contractibleinduction},
observe that if $S$ is the only cell of $D_\tau$ then the $C(4)$ condition implies that $S$ intersects at least $3$ consecutive cells $C_1,C_2,C_3,C_4$ of $A_\tau$. If $S$ is a $1$-shell, then by the $C(4)$ condition it intersects at least $3$ consecutive cells $C_1,C_2,C_3$ of $A_\tau$.
If $S$ is a $2$-shell, then by the  $C(4)$--$T(4)$ condition it intersects at least $3$ consecutive cells $C_1,C_2,C_3$ of $A_\tau$.
In any case, the path $\tau'$ obtained from $\tau$ by pushing across $C_2$ traverses at most as many cells as $\tau$, but bounds a disc diagram $D_{\tau'}$ with $\area (D_{\tau'})< \area (D_\tau)$, contradicting our initial choices. 

Thus we arrive at a contradiction in all cases, so  $\bigcup_{j < k_0} X_{v_j} \cap X_{v_{k_0}}$ must be contractible and the induction is complete. \end{myproof}

\subsection{The C(3)-T(6) case}

The statement of Greendlinger's Lemma in the case of $C(3)$--$T(6)$ complexes is more complicated.

\begin{theorem}[Greendlinger's Lemma]\label{thm:greendlingerc3t6}
Let $X$ be a $C(3)$--$T(6)$ complex and $D \rightarrow X$ be a reduced disc diagram, then either
\begin{enumerate}
\item $D$ is a single cell,
\item $D$ is a ladder,
\item $D$ has at least three spurs, $1$-shells and/or pairs of $2$-shells sharing an edge incident to a boundary vertex.
\end{enumerate}
\end{theorem}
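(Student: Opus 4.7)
The plan is to prove the theorem by a combinatorial Gauss--Bonnet (discharging) argument, in the spirit of Theorem~\ref{thm:corners}. First I would dispose of the trivial cases: if $D$ is a single cell or a ladder, the conclusion holds, so assume $\area(D) \geq 2$, that $D$ is not a ladder, and that $D$ has no spurs (spurs can be stripped off the boundary and contribute directly to case (3) when present). The goal is then to exhibit at least three $1$-shells or pairs of $2$-shells sharing an edge incident to a boundary vertex.

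I would form a reduced diagram $D'$ from $D$ by removing every degree-$2$ vertex except for a single degree-$2$ vertex retained on the outerpath of each $1$-shell, so that $1$-shells become triangular cells of $D'$ rather than bigons; $2$-shells already become triangles without retaining any extra vertex. Since $D'$ is still a topological disc, $\chi(D') = 1$, and I would distribute this Euler characteristic by assigning
$$c(v) = 1 - \tfrac{1}{2}\deg(v) + \sum_{C \ni v} \tfrac{1}{|\partial C|},$$
so that $\sum_v c(v) = 1$. By $T(6)$ every internal vertex satisfies $\deg(v) \geq 6$, and by $C(3)$ each incident cell has at least three sides, giving $c(v) \leq 1 - \deg(v)/6 \leq 0$ at internal vertices. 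A routine case analysis then shows $c(v) \leq 0$ whenever $v$ is a boundary vertex of degree $\geq 4$, so positive charge appears only at boundary vertices of degree $2$ (contributing at most $1/3$) or degree $3$ (contributing at most $1/6$).

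The main obstacle will be the delicate bookkeeping required to attribute the positive charge exclusively to the configurations listed in the statement. A retained degree-$2$ vertex on the outerpath of a $1$-shell contributes exactly $1/3$, cleanly accounting for the $1$-shell case. For degree-$3$ boundary vertices, I would show that their contribution of $1/6$ can only survive when the vertex lies between two triangular cells of $D'$ sharing an interior edge incident to it, which is precisely the configuration of two $2$-shells of $D$ meeting along a boundary-incident edge. Any other sources of positive charge at degree-$3$ boundary vertices (for instance, isolated $2$-shells not belonging to such a pair, or vertices adjacent to larger cells) must be cancelled against negative charge at the neighbouring degree-$\geq 4$ outerpath endpoints, following a sharing argument analogous to the last paragraph of the proof of Theorem~\ref{thm:corners}.

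Finally, since each prescribed corner contributes at most $1/3$ to $\sum_v c(v) = 1$, at least three such corners must exist; the possibility of exactly two is ruled out by the assumption that $D$ is not a ladder, via the standard rigidity observation that positive charge balances the surrounding negative charge only when the cells of $D'$ are arranged in a linear sequence.
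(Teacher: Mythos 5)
You should first be aware that the paper does not prove this statement at all: Greendlinger's Lemma for $C(3)$--$T(6)$ complexes is quoted from the classical literature (cf.~\cite{Lyn66,McCammondWiseFanLadder}), and what the paper actually proves by the discharging method you imitate are only the refined one-shell statements, Theorems~\ref{thm:corners} and~\ref{thm:cornersc3t6}. Your set-up is sound as far as it goes: forming $D'$, distributing $\chi(D')=1$ by $c(v)=1-\tfrac12\deg(v)+\sum_{C\ni v}1/|\partial C|$, and checking that internal vertices (degree $\geq 6$ by $T(6)$) and boundary vertices of degree $\geq 4$ carry non-positive charge is exactly the computation in the proof of Theorem~\ref{thm:cornersc3t6}, which also records the positive values $1/3$, $1/6$ and the $1/12$ triangle--square case that you omit.

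The genuine gap is in your final counting step. The inequality ``each prescribed corner contributes at most $1/3$ to $\sum_v c(v)=1$, hence there are at least three corners'' is not valid: the total positive charge equals $1$ plus the absolute value of the negative charge, and, more importantly, the positive charge is not confined to $1/3$ per configuration. A single $1$-shell together with the two degree-$3$ boundary vertices at the ends of its inner piece can carry $1/3+1/6+1/6=2/3$; indeed, in the two-triangle ladder the two $1$-shells alone account for the entire charge of $1$ (retained vertices $1/3+1/3$, shared vertices $1/6+1/6$), and there the flanking triangles at the degree-$3$ vertices are $1$-shells, not a pair of $2$-shells, so the identification of degree-$3$ positive charge with the listed pair configuration also fails. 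Consequently the arithmetic cannot by itself force three special configurations; what is needed is the structural implication ``a spurless diagram of area $\geq 2$ with at most two special cells is a ladder,'' which is essentially the Ladder Theorem and is proved in the literature by induction on area and a genuine structural analysis, not by the one-line ``rigidity observation'' you invoke. Likewise, the stray $+1/12$ charge at a degree-$3$ vertex shared by a $2$-shell and a square is attached to a configuration that does not appear in the statement, and the promised cancellation against neighbouring negative charge is precisely the bookkeeping you have not supplied --- the sharing argument at the end of Theorem~\ref{thm:corners} only yields the existence of one good shell, not a count of three together with the ladder alternative. As written, the proposal establishes (a variant of) Theorem~\ref{thm:cornersc3t6}, but not the trichotomy claimed in Theorem~\ref{thm:greendlingerc3t6}.
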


Note that Lemma \ref{lem:c6cont} is true for the $C(3)$--$T(6)$ condition as well.

\begin{lemma}\label{lem:c3t6cont}
Let $X$ be a simply-connected $C(3)$--$T(6)$ small-cancellation complex. Let $C_1,C_2$ be 2-cells of $X$. Then  either $\partial C_1 =\partial C_2$, or $ C_1\cap  C_2=\emptyset$, or $ C_1\cap  C_2$ is contractible.
\end{lemma}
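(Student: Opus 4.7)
My plan is to closely follow the proof of Lemma~\ref{lem:c6cont}, substituting Theorem~\ref{thm:greendlingerc3t6} for Theorem~\ref{thm:greendlingerc6} and invoking the $T(6)$ condition to rule out the new Greendlinger features. Assume $C_1 \cap C_2$ is non-empty but disconnected, and choose vertices $a, b$ in distinct connected components that minimise $\dist(a, b)$. Let $\alpha \to C_1$ and $\beta \to C_2$ be locally geodesic paths with endpoints $a, b$ for which the disc diagram $D$ bounded by $\alpha\beta^{-1}$ has minimal area; as in the $C(6)$ argument, I may assume $C_1, C_2 \not\subset D$. Define $D_+ := D \cup_\alpha C_1 \cup_\beta C_2$; the minimality argument shows $D_+$ is reduced, and $\partial D$ has no spurs since $\alpha, \beta$ are locally geodesic and $a, b$ realise the minimum distance.

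The core of the proof is a case analysis obtained by applying Theorem~\ref{thm:greendlingerc3t6} to $D$. If $D$ is a single $2$-cell $C$, then $C$ is interior in $D_+$ with $\partial C = \alpha \cup \beta$ a concatenation of exactly two pieces (one with $C_1$, one with $C_2$), contradicting $C(3)$. If $D$ is a ladder with at least two $2$-cells, then any corner on the interior of $\alpha$ between consecutive ladder cells has valence $3$ in $D$ (two edges on $\alpha$ together with the first edge of the innerpath joining $\alpha$ to $\beta$); after gluing $C_1$ along $\alpha$, this corner becomes an interior vertex of $D_+$ of valence $3$, contradicting $T(6)$. Otherwise, $D$ has at least three features, each a $1$-shell or a pair of $2$-shells sharing an edge incident to a boundary vertex (the latter, by the standard discharging proof, at a boundary vertex of valence $3$). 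A $1$-shell whose outerpath lies entirely in the interior of $\alpha$ or $\beta$ has, in $D_+$, boundary equal to a concatenation of only two pieces (outer with $C_i$, inner with a cell of $D$), contradicting $C(3)$; and a pair of $2$-shells whose shared valence-$3$ boundary vertex $v$ lies in the interior of $\alpha$ or $\beta$ gives rise to an interior vertex of $D_+$ of valence $3$, contradicting $T(6)$. A local analysis shows each of the corners $a, b$ accommodates at most one such feature --- a $1$-shell containing the corner fills the unique interior angular sector there, while a pair-of-$2$-shells configuration at that corner subdivides the same sector by an interior edge, and these two configurations are mutually exclusive --- so we can have at most two features at the corners, contradicting the existence of three.

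Having established that $C_1 \cap C_2$ is connected, the conclusion follows as in the $C(6)$ case: a connected one-dimensional subcomplex of the cycle $\partial C_1$ is either a proper subarc (hence contractible) or the entire cycle, forcing $\partial C_1 = \partial C_2$ up to backtracks. The main subtlety I anticipate is justifying that the pair-of-$2$-shells feature from Theorem~\ref{thm:greendlingerc3t6} occurs at a boundary vertex of valence exactly $3$, since this is the version of the feature produced by the standard discharging proof but is not spelled out in the statement; without it, a high-valence pair-of-$2$-shells feature located on the interior of $\alpha$ or $\beta$ would not yield an immediate $T(6)$ contradiction in $D_+$ and would require a separate argument (for instance, iterating the reduction or using that any higher-valence boundary vertex in the interior of $\alpha$ or $\beta$ is forced by $T(6)$ in $D_+$ to have valence $\geq 6$, precluding the ``minimal excess charge'' configuration identified as a pair-of-$2$-shells feature).
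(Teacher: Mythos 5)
Your overall strategy is the natural one: the paper itself gives no argument for this lemma (it is simply asserted as the $C(3)$--$T(6)$ analogue of Lemma~\ref{lem:c6cont}), and the intended route is exactly the one you take -- form $D_+=D\cup_\alpha C_1\cup_\beta C_2$, note it is reduced, and use $T(6)$ to kill the low-valence interior vertices created by the gluing, with $C(3)$ handling cells whose boundary becomes a concatenation of two pieces. Your single-cell case and your exclusion of $1$-shells whose outerpath lies inside $\alpha$ or $\beta$ are correct. But the ladder case rests on a false local claim: a boundary vertex $v$ in the interior of $\alpha$ that is an endpoint of a rung need not have valence $3$ in $D$, because several consecutive ladder cells may fan around $v$, giving $v$ two boundary edges and arbitrarily many interior edges; if at least four rungs meet $v$, then $v$ has valence $\geq 6$ in $D_+$ and $T(6)$ yields no contradiction there. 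The conclusion is still reachable -- for instance, the opposite endpoints of the rungs of such a fan meet only one rung each and so have valence $3$, and at most two of them can be the corners $a,b$; alternatively, both endpoints of the first rung would need at least four rungs, which is impossible since no two rungs can share both endpoints without producing a two-piece cell violating $C(3)$ -- but some argument of this kind is required and is absent.

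The more serious gap is the one you flag yourself: the case of at least three Greendlinger features depends on a strengthening of Theorem~\ref{thm:greendlingerc3t6} that is nowhere available, namely that each ``pair of $2$-shells sharing an edge incident to a boundary vertex'' may be taken at a vertex of valence exactly $3$, \emph{and} that at least three features of this refined kind exist. The theorem as stated carries no valence restriction, and the refined result the paper does prove (Theorem~\ref{thm:cornersc3t6}) produces only a single good shell, with different degree conditions, and appears later. Without the refinement your counting collapses: a pair of $2$-shells sharing an edge at a boundary vertex of valence $\geq 6$ in the interior of $\alpha$ or $\beta$ is perfectly compatible with both $C(3)$ and $T(6)$ in $D_+$ (each such $2$-shell has boundary a concatenation of three pieces there), so ``at most one feature per corner, hence at most two features'' no longer contradicts anything; and your fallback remark -- that $T(6)$ forces such a vertex to have valence $\geq 6$ in $D_+$ -- does not preclude the feature, it merely describes it. To close the argument you would need to prove the refined three-feature statement (e.g.\ by pushing the discharging computation of Theorem~\ref{thm:cornersc3t6} to locate all positively curved boundary spots), or restructure the proof, for example by running a curvature count directly on $D_+$; note also that Lemma~\ref{t5piecesedges} (every piece is an edge) is available and would streamline these computations. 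As written, this step is a genuine gap rather than a subtlety.
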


Moreover, in the case of $C(3)$--$T(6)$ complexes, we have some additional information about pieces of a complex, a fact which was first observed by Pride (see also \cite{GerstenShort90}, \cite{McCammondWiseFanLadder}).

\begin{lemma}\label{t5piecesedges}
If $X$ is a $T(5)$ complex then every piece in $X$ is an edge.
\end{lemma}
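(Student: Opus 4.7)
The plan is to argue by contradiction. Suppose there is a piece $p \to X$ of combinatorial length $\geq 2$, witnessed by $2$-cells $C_1, C_2$ (which may coincide as cells, provided the two factorisations $p \to \partial C_i$ fail to extend to a homeomorphism $\partial C_1 \to \partial C_2$ as in the definition of a piece). Pick an interior vertex $v$ of $p$, and let $e_1, e_2 \subset p$ be the two edges of $p$ incident to $v$. Since $p$ is a subpath of both $\partial C_1$ and $\partial C_2$, each of $C_1$ and $C_2$ has a corner at $v$ using the same ordered edge pair $(e_1, e_2)$. Consequently, in $\operatorname{link}(v, X)$ these two corners contribute two parallel link-edges between the link-vertices $\hat{e}_1$ and $\hat{e}_2$, forming a bigon.

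The next step is to realise this bigon as a reduced disc diagram. I would form $D = C_1 \cup_p C_2$ by gluing $C_1$ and $C_2$ along $p$. By the piece condition $(C_1, C_2)$ fails the cancellability diagram of Definition~\ref{def:cancellablecell}, so $D$ has no cancellable pair and is therefore reduced. The vertex $v$ is an interior vertex of $D$ whose link is precisely the bigon described above; in particular $v$ has valence $2$ in $D$.

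The core of the argument is to extract from $D$ the contradiction with $T(5)$. Via the standard link/star-complex reformulation of the $T(q)$ condition (cf.~\cite{GerstenShort90, McCammondWiseFanLadder}), an interior vertex of valence $n$ in a reduced disc diagram corresponds to a simple cycle of length $n$ in the star complex of $X$ at $v$, and $T(q)$ amounts to the assertion that every such cycle has length $\geq q$. Under this reformulation, $T(5)$ forbids bigons; the bigon constructed above in $\operatorname{link}(v, X)$ is a simple cycle of length $2$ coming from the reduced disc diagram $D$, which is the desired contradiction.

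The main obstacle is the reconciliation with the literal formulation of Definition~\ref{def:T}, which forbids only internal valences $n$ strictly between $2$ and $q$. The resolution is that an internal valence-$2$ vertex of a would-be reduced disc diagram should be eliminated by collapsing its two incident edges to a single edge; this collapse is available if and only if the two edges coincide in $X$. In our situation $e_1 \neq e_2$ as edges of $X$ (otherwise $p$ would contain a backtrack and fail to be a genuine combinatorial path), so $D$ cannot be simplified, and the presence of this non-removable internal valence-$2$ vertex in a reduced diagram contradicts the $T(5)$ condition (as understood in the standard convention of~\cite{GerstenShort90, McCammondWiseFanLadder}), completing the proof.
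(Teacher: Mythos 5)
The paper does not actually prove this lemma---it is quoted as an observation of Pride, with references to \cite{GerstenShort90} and \cite{McCammondWiseFanLadder}---so your proposal has to stand on its own, and unfortunately its final step does not. The gap is in how you derive a contradiction from $T(5)$. Definition~\ref{def:T} (and the standard convention it reflects) forbids internal vertices of valence $n$ with $2<n<q$ only; an internal vertex of valence $2$ in a reduced diagram is explicitly permitted, and such vertices genuinely occur whenever two $2$-cells of any complex meet along a piece of length at least $2$ (for instance in reduced diagrams over the $C(6)$ presentation $\langle a,b,t\mid ta^Kt^{-1}b^{K+4}\rangle$ from the introduction, where $b^{K+3}$ is a piece). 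Your closing claim that a ``non-removable'' valence-$2$ internal vertex violates the standard convention is therefore false, and the same is true of the bigon in $\operatorname{link}(v,X)$: the $T(q)$ conditions place no restriction on length-$2$ cycles in links. A telltale sign of the problem is that your argument never uses $q=5$ anywhere: if it were valid it would show that \emph{every} complex (all of which satisfy $T(3)$ vacuously) has only single-edge pieces, which is absurd.

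What you do have---the two distinct corners of $C_1$ and $C_2$ at an interior vertex $v$ of the piece, spanning the same pair of edge-germs $e_1,e_2$, and the observation that piece-ness rules out cancellable pairs---is the right raw material, but it must be converted into an internal vertex of valence $4$, not $2$. Concretely: take four $2$-cells $D_1,D_2,D_3,D_4$ mapping to $C_1,C_2,C_1,C_2$ respectively, and glue them cyclically around a vertex $v$ along alternating copies of $e_1$ and $e_2$ (so $D_1\cap D_2$ and $D_3\cap D_4$ map to $e_1$, while $D_2\cap D_3$ and $D_4\cap D_1$ map to $e_2$); this is possible precisely because each of $C_1,C_2$ has a corner at $v$ on the ordered pair $(e_1,e_2)$. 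Each adjacent pair is a copy of $(C_1,C_2)$ sharing a single edge of the piece, and a cancellable pair along that edge would yield a boundary identification $\partial C_1\to\partial C_2$ commuting over all of $p$, contradicting the definition of a piece. Hence this pinwheel is a reduced disc diagram with an internal vertex of valence $4$, and since $2<4<5$ this contradicts $T(5)$. Note that the parity of the alternation forces an even number of sectors, so valence $4$ is the best one can do from a single long piece---this is exactly why the hypothesis is $T(5)$ rather than $T(4)$, a distinction your argument, as written, cannot see.
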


Consequently, the statement of Lemma \ref{lem:c3t6cont} can be refined to the following: either $\partial C_1 =\partial C_2$, or $ C_1\cap  C_2=\emptyset$, or $ C_1\cap  C_2$ is  a single edge or a single vertex.

As in $C(4)$--$T(4)$ case, for the analogue of the Lemma \ref{clm:contractibleinduction}, we need a refined version of Greendlinger's Lemma.

\begin{theorem}\label{thm:cornersc3t6}
Let $X$ be a $C(3)$--$T(6)$ complex and $D \rightarrow X$ be a reduced disc diagram. If $D$ does not contain spurs then it  is either a single cell, contains a $1$-shell with at most one vertex of degree more than $6$ in $D$, or contains  a $2$-shell with all boundary vertices of degree at most $5$ in $D$.
\end{theorem}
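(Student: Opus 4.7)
The plan is to adapt the Euler characteristic discharging argument used in the proof of Theorem~\ref{thm:corners} to the $C(3)$--$T(6)$ setting. Assuming $\area(D) \geq 2$ and $D$ has no spurs, I would construct a simplified disc diagram $D'$ from $D$ by removing all internal vertices of degree $2$ and retaining a controlled number of degree-$2$ vertices on the outerpaths of shells, chosen so that the cells of $D'$ still satisfy $|\partial C| \geq 3$ and shells of different types can be distinguished by their retained boundary vertices, in the spirit of the $C(4)$--$T(4)$ case.

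Since $\chi(D')=1$, I would distribute charges by sending $-\tfrac{1}{2}$ from each edge to each endpoint and $\tfrac{1}{p}$ from each $p$-gonal cell to each of its boundary vertices. By Lemma~\ref{t5piecesedges} every piece is an edge, and $C(3)$ gives $p \geq 3$, so each cell contributes at most $\tfrac{1}{3}$ per vertex. For an internal vertex of degree $d$, the $T(6)$ condition forces $d \geq 6$, so its discharge is at most $1 - \tfrac{d}{2} + \tfrac{d}{3} = 1 - \tfrac{d}{6} \leq 0$. For a boundary vertex of degree $d$ (generically containing $d-1$ cells), the discharge is at most $\tfrac{2}{3} - \tfrac{d}{6}$, which is positive only for $d \in \{2,3\}$ (with bounds $\tfrac{1}{3}$ and $\tfrac{1}{6}$ respectively) and non-positive for $d \geq 4$.

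Because the total discharge equals $1$, the positive discharge must be supplied by boundary vertices of degree $2$ or $3$ in $D'$, and these are localised on the outerpaths of shells: either as retained interior vertices of an outerpath (all of degree $2$) or as endpoints of outerpaths of degree $3$ in $D$. I would then argue by contradiction, assuming that every 1-shell of $D$ has both outerpath endpoints of degree $>6$ in $D$ and every 2-shell has at least one vertex of $\partial S$ of degree $\geq 6$ in $D$. A degree-$>6$ endpoint of an outerpath then contributes at most $\tfrac{2}{3} - \tfrac{7}{6} = -\tfrac{1}{2}$; a vertex of degree $\geq 6$ that is internal to $D$ contributes $\leq 0$ by the internal bound, while one on $\partial D$ contributes at most $\tfrac{2}{3}-1 = -\tfrac{1}{3}$. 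Attributing each positive-charge vertex of $D'$ to the unique shell whose outerpath it lies on (or contains it as an endpoint), I would bound the sum of per-shell contributions and show it is strictly less than $1$, contradicting $\chi(D')=1$.

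The principal obstacle is the bookkeeping when a boundary vertex lies at the junction between two shell outerpaths, or between a shell outerpath and a non-shell arc of $\partial D$. Unlike in the $C(4)$--$T(4)$ case, where only degree-$2$ boundary vertices yield positive discharge, here degree-$3$ boundary vertices also contribute, so a finer analysis of outerpath endpoints and their degrees in $D$ is required. I expect to handle this in the spirit of the concluding paragraph of the proof of Theorem~\ref{thm:corners}: each positive-charge vertex is assigned to a single shell, and the hypothesised high-degree vertices are used to produce sufficient negative discharge to overwhelm the positive contributions, perhaps starting from the existence of 1-shells or pairs of 2-shells guaranteed by Theorem~\ref{thm:greendlingerc3t6}.
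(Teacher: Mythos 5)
Your overall framework coincides with the paper's: prune degree-$2$ boundary vertices (keeping enough on shell outerpaths so every cell of $D'$ is at least a triangle), distribute $-\tfrac12$ per edge-end and $\tfrac1p$ per cell-corner, and note that internal vertices (degree $\geq 6$ by $T(6)$, pieces being single edges) and boundary vertices of degree $\geq 4$ carry non-positive charge, so the total charge $1$ must live at boundary vertices of degree $2$ or $3$ adjacent to $1$- and $2$-shells. Up to this point your computations agree with the paper's proof.

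The gap is in your contradiction hypothesis for $2$-shells. You negate the statement as ``every $2$-shell has at least one vertex of $\partial S$ of degree $\geq 6$,'' which allows that vertex to be the interior apex between the two pieces. An interior vertex of degree exactly $6$ carries charge $0$, not negative charge, so it offsets nothing; worse, the strengthening you are implicitly trying to prove is false. Take a hexagonal region of the equilateral triangular tessellation, which arises as a reduced, spurless diagram over a standard $C(3)$--$T(6)$ complex (e.g.\ the triangular presentation of $\mathbb{Z}^2$): it is not a single cell, has no $1$-shells, every $2$-shell contains an interior apex of degree $6$, and yet the total distributed charge is $1$ (each of the six corner vertices of degree $3$ contributes $\tfrac16$, all other vertices contribute $0$). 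So your hypothesis is consistent with $\chi(D')=1$ and no contradiction can be derived from it. The intended reading --- the one the paper's discharging proves and its later application uses --- is that ``boundary vertices'' of the $2$-shell means its vertices lying on $\partial D$, i.e.\ the endpoints of its outerpath. With that negation, each bad $2$-shell supplies a boundary vertex of $D$ of degree $\geq 6$, whose charge is at most $-\tfrac13$ and which is shared by at most two shells; this is exactly the negative charge the paper's concluding pairing argument uses to cancel the positive contributions, and it also resolves the ``junction'' bookkeeping you deferred. Your hypothesis for $1$-shells (both outerpath endpoints of degree $>6$, each contributing at most $-\tfrac12$) is fine as stated.
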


\begin{myprooft}
    The proof is very similar to the proof of Theorem \ref{thm:corners}. Let $D \rightarrow X$ be a reduced disc diagram with $area(D)\geq 2$. Since the $T(6)$ condition implies that all pieces have length 1, there are no internal vertices of degree $2$ in $D$. To simplify the calculations, let $D'$ be the disc diagram obtained from $D$ by removing all vertices of degree $2$ at the boundary of $D$, with the exception of a single vertex on the outerpath of every $1$-shell.
    Since $D'$ is a disc diagram, the Euler characteristic $\chi(D')$ is equal to $1$.
    We distribute the Euler characteristic from each edge as $\frac{-1}{2}$ to both of it ends, and from each cell $\frac{1}{p}$ to each of its $p$ boundary vertices.
    It is clear that the sum of the distributed Euler characteristic over all vertices is  equal to $1$.
    Let $v$ be an internal vertex of $D'$.
    Since $D$ is reduced and satisfies the $T(6)$ condition, the degree of $v$ is $n\geq 6$. The number of cells containing $v$ is also $n$. 
    Thus, the distributed Euler characteristic for $v$ is at most $1-\frac{n}{2}+\frac{n}{3}$ which for $n\geq 6$ is non-positive.
    Let $u$ be a boundary vertex of $D'$. 
    The degree of $u$ is $n\geq 2$ and it is greater by at least $1$ than the  number of cells containing $u$.
    Thus, the distributed Euler characteristic for $u$ is at most $1-\frac{n}{2}+\frac{n-1}{3}$, where $n$ is the degree of $u$.
    Consequently, the  distributed Euler characteristic for $u$ has is non-positive  if $n\geq 4$. The possible positive values are:
    \begin{itemize}
        \item $\frac{1}{3}$ for $n=2$ which can happen only for a boundary vertex of degree two in a $1$-shell;
        \item $\frac{1}{6}$ for $n=3$ if $u$ is a vertex at the end of the piece lying in  two triangles, and these triangles are either $1$-shells or $2$-shells;
        \item $\frac{1}{12}$ for $n=3$ if $u$ is a vertex at the end of the piece shared by a triangle and a square, such triangle is either a $1$-shell or a $2$-shell, a square is either a $3$-shell or the intersection of its boundary with the  boundary of $D'$ has two components.
     \end{itemize}
    Since the sum is $1$ and the distributed Euler characteristic of all other vertices is non-positive, there are multiple cells in $D$ that are either $1$-shells or $2$-shells.
    If a vertex has degree at least $6$, then its distributed Euler characteristics at most $1-\frac{n}{2}+\frac{n-1}{3}$ which cannot exceed $\frac{-1}{3}$.
    Vertices of degree at least $6$ can be shared by two $1$-shells, or by two $2$-shells or a $1$-shell and a $2$-shell. In either case if none of the $1$-shells touch a vertex of degree less than $6$ on at least one side, and none of $2$-shells touch vertices of degree less than $6$ on both sides, then the sum of the distributed Euler characteristic of vertices is at most $0$, which is a contradiction.
\end{myprooft}

In the $C(3)$--$T(6)$ case, the adaptations to the proof of Theorem~\ref{thm:CLC6} are a bit less straightforward -- the main difficulty arises from the fact that the Helly Property does not hold in full generality, as illustrated in Figure~\ref{fig:triforce}. To bypass this problem, we prove a weaker form of the Helly Property and  we refine the ordering of the nerve complex of $\mathcal{U}$. 

\begin{figure}
\centerline{\includegraphics[scale=.7]{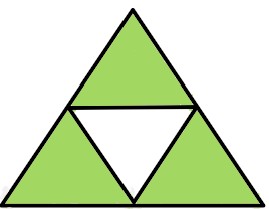}}
\caption{An example of a $C(3)-T(6)$  for which the usual Helly Property does not hold: the coloured triangles pairwise intersect at vertices, but their total intersection is empty.}
\label{fig:triforce}
\end{figure}

\begin{lemma}\label{clm:hellytriangular}
Let $\mathcal{X}(P)$ be the presentation complex associated to a $C(3)-T(6)$ small-cancellation presentation $P$, let $\Lambda$ be its structure graph, and let $V' \subset V(\Lambda)$ with $|V'|< \infty$. If each pairwise intersection $ X_v \cap X_{v'}$ with $v,v' \in V'$ is an edge, then the total intersection $\bigcap_{v \in V'}X_v$ is either a single edge or a single vertex. 
\end{lemma}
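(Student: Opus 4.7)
The plan is to prove the lemma by induction on $|V'|$, with the base case $|V'|=2$ immediate from the hypothesis. For the inductive step with $|V'| \geq 3$, I would fix $v_0 \in V'$ and apply the inductive hypothesis to $V' \setminus \{v_0\}$ (the pairwise-edge hypothesis is inherited) to obtain that $Y := \bigcap_{v \in V' \setminus \{v_0\}} X_v$ is either a single edge or a single vertex. Then $\bigcap_{v \in V'} X_v = Y \cap X_{v_0} \subseteq Y$ is at most an edge, so the possible outcomes are: $Y$ itself (if $Y \subseteq X_{v_0}$), a single endpoint of $Y$, two endpoints of $Y$, or empty. The first two match the conclusion, so the task reduces to ruling out the two-endpoint and empty cases. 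Note that since each pairwise intersection is assumed to be an edge, every $X_v$ with $v \in V'$ must correspond to some translate $gc_i$, i.e., $v \in V_T$, and hence each $X_v$ is an embedded cycle.

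The two-endpoint case should follow from a short bigon argument. Suppose $Y = e$ is an edge with both endpoints $p, q$ in $X_{v_0}$. For every $v \in V' \setminus \{v_0\}$, the edge $e_v := X_v \cap X_{v_0}$ must have both $p$ and $q$ as endpoints, and since $X_v$ is an embedded cycle (by the $C(3)$--$T(6)$ analog of Lemma \ref{lem:c6embedding}, using cyclic reduction of $P$ to preclude length-$2$ cycles), the two edges $Y, e_v \subset X_v$ joining $p$ to $q$ must coincide. Hence $Y = e_v \subseteq X_{v_0}$, and the total intersection equals the full edge $Y$, excluding the two-endpoint configuration.

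The empty case is where the real work lies. My plan is to pick $x \in Y$ and $y \in X_{v_0}$ realizing the graph distance in $\widetilde{\mathcal{X}}(P)^{(1)}$, construct a closed loop $\gamma$ by concatenating a geodesic from $x$ to $y$ with locally geodesic arcs in the cells $X_v$ and $X_{v_0}$ that pass through the known pairwise edge intersections, and study a minimal-area reduced disc diagram $D$ bounded by $\gamma$. Applying the refined Greendlinger's Lemma for $C(3)$--$T(6)$ (Theorem \ref{thm:cornersc3t6}) to $D$, one of the following holds: $D$ is a single cell, contains a $1$-shell with at most one vertex of degree $>6$, or contains a $2$-shell whose boundary vertices have degree at most $5$. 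After enlarging $D$ by attaching the boundaries of the cells $X_v$ along the relevant intersection arcs, the outerpath of the shell in the resulting diagram is a single piece, which by Lemma \ref{t5piecesedges} is a single edge; the innerpath decomposes into at most two pieces coming from the adjacent cells in the configuration. This yields a shell boundary decomposed into at most three pieces, contradicting the $C(3)$ condition.

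The principal obstacle is the careful construction of $\gamma$ and the subsequent piece count in the $2$-shell scenario, where a $2$-shell (or pair of adjacent $2$-shells sharing an edge, as in Theorem \ref{thm:greendlingerc3t6}) may interact nontrivially with the cells $X_v$. One must verify that every combinatorial placement of such shells relative to $\gamma$ either forces reducibility of $D$ (contradicting minimality) or admits the desired piece-count collapse. The $T(6)$ condition is central throughout, since by Lemma \ref{t5piecesedges} pieces are single edges, reducing the piece count to an edge count and making the final contradiction with $C(3)$ tractable.
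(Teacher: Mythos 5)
There is a genuine gap, and it sits exactly where you say the real work lies. Your final contradiction in the ``empty case'' is that the shell's boundary decomposes into at most three pieces, ``contradicting the $C(3)$ condition'' --- but $C(3)$ only forbids boundaries that are concatenations of \emph{fewer than} three pieces; a decomposition into exactly three pieces (outerpath one piece plus an innerpath of two pieces, which is what a $2$-shell gives you) is perfectly consistent with $C(3)$. This is not a technicality: it is precisely the point where the $C(3)$--$T(6)$ case differs from $C(6)$ and $C(4)$--$T(4)$, and it is why the full Helly property fails here (the ``triforce'' of Figure~\ref{fig:triforce} consists of triangles each of whose boundaries is exactly three pieces). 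The paper's proof does not get its contradiction from a piece count at all: assuming the three pairwise-intersection edges $e_{ij}$ are pairwise disjoint, it builds a cycle $\rho_1\rho_2\rho_3$ through them, applies Greendlinger's Lemma to a reduced diagram it bounds, and then rules out every outcome (single cell, $1$-shell, pair of adjacent $2$-shells) by gluing the cells $X_{v_i}$ back on and exhibiting an \emph{internal vertex of degree $3$ or $4$}, contradicting $T(6)$. Your sketch never produces such a low-degree internal vertex, and without some such use of $T(6)$ (beyond ``pieces are edges'') the argument cannot close; you yourself flag the $2$-shell bookkeeping as unresolved.

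Two smaller points. First, your exclusion of the two-endpoint case is right in spirit but the justification is off: cyclic reducedness does \emph{not} preclude length-$2$ relators (e.g.\ $st$ with $t\neq s^{-1}$); the bigon $X_v$ with both edges pieces should instead be excluded by $C(3)$ (its boundary would be a concatenation of two pieces with no boundary arc in a reduced diagram). Second, your induction starts at $|V'|=2$ and peels off one cell, whereas the paper's base case is $|V'|=3$, where the triple-intersection analysis above is carried out directly and the inductive step is then immediate; your reduction to $Y\cap X_{v_0}$ is fine structurally, but all the difficulty is concentrated in the emptiness exclusion, which is exactly the part your proposal leaves unproved and whose proposed endgame is invalid.
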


\begin{myprooft} 
    The proof is by induction on $|V'|$ where the base case is $|V'|=3$.
    Let $V'=\{X_{v_1}, X_{v_2}, X_{v_3}\}$ and let $e_{ij} \subset X_{v_i} \cap X_{v_j}$ be the edges in each pairwise intersection.
    Observe that if $e_{ij}\cap e_{ik}\neq \emptyset$, then by Lemma~\ref{lem:c3t6cont}, the triple intersection is also not empty and is contractible, which means that it is either a single edge or a single vertex.
    Thus we can assume that $e_{ij}\cap e_{ik}=\emptyset$ for any permutation of $i,j,k$.
    
    For each $i$, let $\rho_i \rightarrow X_{v_i}$ be a spurless path joining $e_{ij}, e_{ik}$.
    Moreover, since $e_{ij}\cap e_{ik}=\emptyset$ we can choose the $\rho_i$ so that the concatenation $\rho_1\rho_2\rho_3$ is a cycle that does not separate any two of the $e_{ij}$. Let $D\rightarrow \hat B_n$ be a reduced disc diagram with $\partial D=\rho_1\rho_2\rho_3$.
    Then by Lemma \ref{thm:greendlingerc3t6}, $D$ is a single cell, or is a ladder (and contains exactly two $1$-shells), or it contains at least three $1$-shells and/or pairs of $2$-shells sharing an edge incident to a boundary vertex.

    It is clear that neither of $X_{v_1}, X_{v_2}, X_{v_3}$ can be a $1$- or $2$-shell in $D$, as 
    $e_{ij}\cap e_{ik}=\emptyset$ implies at least three pieces in the part of its boundary in the interior of $D$. 
    If $D$ is a single cell $C$ or contains a $1$-shell $C$, then there is a reduced disc diagram  $C\cup X_{v_i} \cup X_{v_j}$ with an internal vertex of degree $3$, contradicting the $T(6)$ condition.
    If $D$ contains a pair of $2$-shells $C_1,C_2$ sharing an edge incident to a boundary vertex, then there is either a reduced disc diagram  $C_1\cup C_2\cup X_{v_i}$ with an internal vertex of degree $3$ or $C_1\cup C_2\cup X_{v_i} \cup X_{v_j}$ with an internal vertex of degree $4$, in either case contradicting $T(6)$.

    Thus  $D$ must be a degenerate disc diagram, but then $e_{ij}\cap e_{ik}\neq \emptyset$.

    For the inductive step, we use the fact that $\bigcap_{v \in V'-\{v_1\}}X_v$ 
    is either a single edge or vertex, and now intersect $X_{v_1}$ with $\bigcap_{v \in V'-\{v_1,v_2\}}X_v$.
\end{myprooft}

Let $\hat B_n^o=\hat B_n-\hat B_n^{(0)}$, let $\mathcal{U}^o$ be the open cover of $\hat B_n^o$ induced by $\mathcal{U}$,  let $\mathcal{V}^o$ be the vertex set of the nerve complex $\mathbf{N}(\mathcal{U}^o)$. In general, $\hat B_n^o$ need not be connected, and the connected components of $\hat B_n^o$ partition $\mathcal{V}^o$ into subsets, which we denote $V^o_\nu$. Let $\varphi^o_\nu:V^o_\nu \rightarrow \naturals$ denote the ordering of Definition~\ref{def:order} on $\mathbf{N}(\mathcal{U}^o_\nu)$.  
The vertex set $V^o_\nu$ of $\mathbf{N}(\mathcal{U}^o_\nu)$  naturally corresponds to a subset (which we also denote $V^o_\nu$) of the vertex set $V$ of $\mathbf{N}(\mathcal{U})$.  We make the following observation:

\begin{lemma}\label{lem:unth}
    If $X_v, X_w \in \mathcal{U}$ intersect only at  vertices, then $X_v \cap X_w$ is a single vertex. 
\end{lemma}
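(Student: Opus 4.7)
The plan is to split the argument into three cases according to whether $X_v, X_w$ lie in $V_T$ (relator cycles) or in $V_U$ (untethered components). The two like-type cases are essentially immediate, and the main content of the lemma lies in the mixed case.

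If both $X_v = gc_i$ and $X_w = g'c_j$ lie in $V_T$ with $v \neq w$, then the refined form of Lemma~\ref{lem:c3t6cont} (using Lemma~\ref{t5piecesedges}) forces $X_v \cap X_w$ to be either empty, a single edge, or a single vertex, so the vertex-only hypothesis selects a single vertex. If both $X_v = \mathcal{F}_\iota$ and $X_w = \mathcal{F}_{\iota'}$ lie in $V_U$ with $\iota \neq \iota'$, then distinct connected components of $\mathcal{F}$ are by construction disjoint subcomplexes, so the intersection is empty and the hypothesis cannot be satisfied.

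The main case is $X_v = gc_i$ and $X_w = \mathcal{F}_\iota$. I would argue by contradiction, assuming two distinct shared vertices $x, y \in gc_i \cap \mathcal{F}_\iota$. Since $\mathcal{F}_\iota$ is a tree (Remark~\ref{rmk: contractible components}), there is a unique geodesic $\beta$ in $\mathcal{F}_\iota$ from $x$ to $y$; take $\gamma$ to be an arc of $gc_i$ from $y$ to $x$. Both paths have at least one edge, and by the definition of $\mathcal{F}$ the edges of $\beta$ lie in no relator cycle, hence are disjoint from the edges of $gc_i$ (and of any $g''c_j$). Combined with $gc_i$ being cyclically reduced and $\beta$ being a tree geodesic, the cyclic word $\gamma\beta^{-1}$ has no backtracks. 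Letting $D_+$ be a minimum-area disc diagram in $\widetilde{\mathcal{X}}(P)$ with $\partial D_+ = \gamma\beta^{-1}$, the non-backtracking and positive-length properties preclude $D_+$ from being a single vertex or a tree and guarantee that $\partial D_+$ has no spurs. Hence every edge of $\partial D_+$, and in particular every edge of $\beta^{-1}$, lies on the boundary of some 2-cell of $D_+$. But this 2-cell has boundary a relator cycle $g''c_j$, so the given edge of $\beta$ lies in $g''c_j$, contradicting $\beta \subset \mathcal{F}_\iota$.

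The main obstacle is the structural step that every boundary edge of $D_+$ lies on the boundary of a 2-cell of $D_+$, which requires ruling out each degenerate possibility for $D_+$ (a single vertex, a tree, or a diagram with spurs). The careful choice of $\beta$ as a tree geodesic and $\gamma$ as an arc of the cyclically reduced $gc_i$ is exactly what ensures $\gamma\beta^{-1}$ is non-backtracking, which is what forces $D_+$ to have enough nondegeneracy for the contradiction to land.
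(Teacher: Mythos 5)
Your case split and overall strategy are sound, and the two like-type cases are handled correctly; the trouble is in the mixed case, at exactly the step you flag. The implication ``$D_+$ has positive area and no spurs, hence every edge of $\partial D_+$ lies on the boundary of some $2$-cell of $D_+$'' is false in general: a disc diagram may contain cut edges, i.e.\ $1$-cells lying on no $2$-cell, without having any spurs. For instance, two $2$-cells joined by an arc of $1$-cells (precisely the kind of $1$-cells the paper's definition of a ladder allows) has every vertex of degree at least $2$, yet the edges of the connecting arc lie on no $2$-cell. So ruling out a single vertex, a tree, and spurs does not by itself force the edges coming from $\beta$ onto $2$-cells, and your contradiction does not yet land.

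The repair is already contained in your setup, but it needs to be invoked explicitly: since $\gamma$ is an embedded arc of $gc_i$, $\beta$ is an embedded geodesic in the tree $\mathcal{F}_\iota$, and no edge of $\beta$ lies on any relator cycle, the closed path $\gamma\beta^{-1}$ traverses each edge of $\hat B_n$ at most once. A cut edge of $D_+$ is traversed twice by $\partial D_+$, so its image edge would be traversed twice by $\gamma\beta^{-1}$; hence $D_+$ has no cut edges, every boundary edge (in particular every edge mapping to $\beta$) lies on a $2$-cell, and the boundary of that $2$-cell is a translate $g''c_j$, contradicting $\beta\subset\mathcal{F}_\iota$ as you intended. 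For comparison, the paper argues without a case split: it takes spurless paths in $X_v$ and $X_w$ joining two intersection points, notes that a degenerate diagram would produce a common edge (contradicting the vertex-only hypothesis), uses Lemma~\ref{lem:c3t6cont} to force one of the two subcomplexes to be untethered when the area is positive, and then derives the contradiction from a $2$-cell of the diagram having outerpath on the untethered side; your cut-edge analysis replaces that outerpath argument and works once the edge-injectivity of $\gamma\beta^{-1}$ is used in place of the false ``no spurs'' implication.
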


\begin{myprooft}
Suppose $\{p,q\} \subset X_v\cap X_w$, and let $\sigma_v \rightarrow X_v, \sigma_w\rightarrow X_w$ be spurless paths connecting $p,q$.
Since $\mathcal{X}(P)$ is simply-connected, there is a   disc diagram   $D \rightarrow X$ with $\partial D=\sigma_v\sigma_w^{-1}$. If $area(D)=0$, then there is an edge $e \subset  X_v\cap X_w$, contradicting our hypothesis. Suppose $area(D)>0$. Then either $X_v$ or $X_w$ is an untethered component by Lemma~\ref{lem:c3t6cont}. But then a 2-cell of $D$ with outerpath on either $\sigma_v \rightarrow X_v$ or $\sigma_w\rightarrow X_w$ contradicts the definition of an untethered component.  
    We have thus established that $ X_v\cap X_w$ is a single vertex.
    \end{myprooft}

We make the following observation:

\begin{remark}\label{rmk:orders comp}
If two elements   $X_v, X_w \in \mathcal{U}$ intersect at a vertex $p$, then since $\hat B_n$ is a Cayley graph, then either $p$ is a cut vertex of $\hat B_n$, or $X_v$ and $X_w$ lie in the same connected component of $\hat B_n^o$. Indeed, when viewed in $\mathcal{X}(P)$, the projections of $X_v$ and $X_w$ (which we denote in the same way) are either 2-cells or loops in $\mathcal{X}(P)^{(1)}$, and either $\mathcal{X}(P)$ is a wedge with  $X_v$ and $X_w$ in different components of the wedge, or  $X_v$ and $X_w$  are both 2-cells, and there is a non-trivial relation between a generator appearing in the attaching map of $X_v$ and a generator appearing in the attaching map of $X_w$. 
\end{remark}

\begin{figure}
\centerline{\includegraphics[scale=0.45]{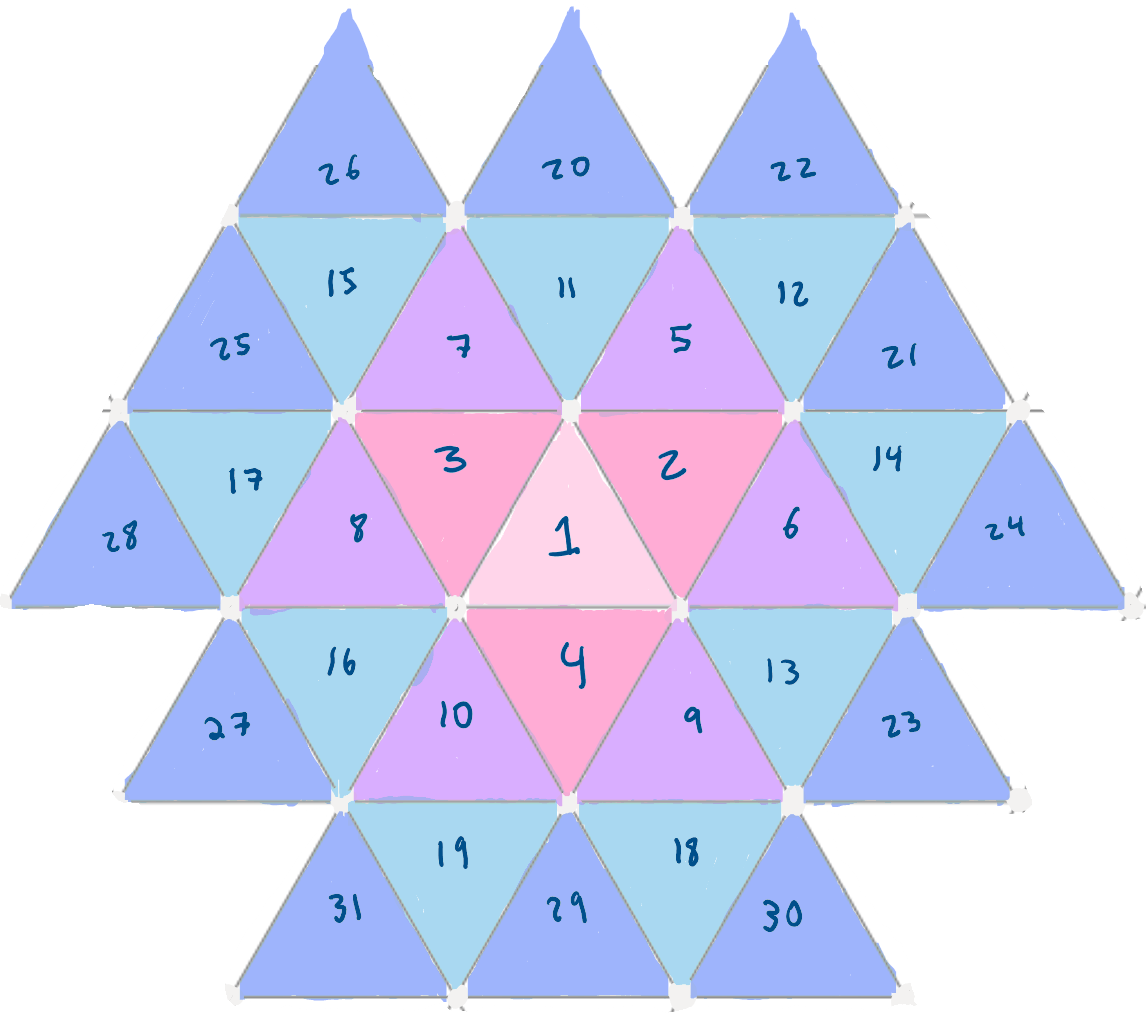}}
\caption{The modified ordering for a portion of a triangular tiling.}
\label{fig:orderingexc3t6}
\end{figure}

The previous remark shows that the  orders  $\{\varphi^o_\nu\}$  defined on the $V^o_\nu$ are compatible, and induce a  partial  order on $\mathcal{U}$ that extends to a total order $\varphi:V\rightarrow \naturals$ by the order-extension principle.

We re-index the vertex sets $V^o_\nu$ according to this order, i.e., we write $V^o_\nu=\{v^o_{\nu,j}\}_{j \in \naturals}$ (or simply $V^o_\nu=\{v^o_{\nu,j}\}_{j \in \naturals}$ when the $V^o_\nu$ is fixed) where the indexing agrees with $\varphi^o_\nu$. As in previous sections, we refer to this order and the induced  order on the corresponding subset of $\mathcal{U}$ 
interchangeably.

We  prove Theorem~\ref{thm:CLC6intro} for $C(3)-T(6)$ presentations by first showing that, for each $V^o_\nu\subset V$, the intersection $$\bigcup_{j < k} X_{v_j} \cap  X_{v_k}$$ 
is contractible for each $k \in \naturals$. We then assemble the various subcomplexes corresponding to the $V^o_\nu\subset V$ together using the ordering $\varphi$ obtained by the order-extension principle. This  yields the desired homotopy equivalence by Remark~\ref{rmk:orders comp}.

In the proof, as in the $C(6)$ and $C(4)-T(4)$ cases, we use the following Lemma, which is proven in an identical way to Lemma~\ref{lem:hellyimpliesniceunion}.

\begin{lemma}\label{lem:hellyimpliesniceunion3}
Let $\mathcal{X}(P)$ be the presentation complex associated to a $C(3)-T(6)$ presentation $P$ and let $\Lambda$ be its structure graph. Let $\sigma$ be a simplex of $\mathbf{N}(\mathcal{U})$ and let $Y_\sigma$  be the subcomplex of $\mathcal{X}(P)$ consisting of all the 2-cells whose boundary corresponds  to vertices in $\sigma$. Then $Y_\sigma$ is simply connected.
\end{lemma}

We now apply the results collected above to deduce an analogue of Claim~\ref{clm:final}.

\begin{claim}\label{clm:finalc3t6} 
 Let $\mathcal{X}(P)$ be the presentation complex associated to a $C(3)$--$T(6)$ small-cancellation presentation $P$. Fix a $\nu$ and consider  $V^o_\nu=\{v_j\}_{j \in \naturals}$.  For each $v_j \in V^o_\nu$, let $X_{v_j}$ be the corresponding element of $\mathcal{U}$.  Let $A_\beta$ be an annular diagram in $\widetilde{\mathcal{X}(P)}$ collaring a reduced disc diagram $D_\beta$ so that $\partial D_\beta= D_\beta\cap A_\beta=\beta$, and finally, let $X_{v_{k_0}}$ be the maximal element in the ordering $\varphi^o_\nu$ corresponding to the boundary of a $2$-cell in $A_\beta$. Then $\partial S < X_{v_{k_0}}$  for every shell $S$ in $D_\beta$.
\end{claim}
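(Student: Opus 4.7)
The plan is to mirror the structure of the proof of Claim~\ref{clm:final}, substituting the $C(6)$ ingredients with their $C(3)-T(6)$ analogues: Theorem~\ref{thm:greendlingerc3t6} in place of Greendlinger's Lemma, Theorem~\ref{thm:cornersc3t6} for additional information about shells, Lemma~\ref{lem:c3t6cont} combined with Lemma~\ref{t5piecesedges} to control the structure of pairwise intersections, and Lemma~\ref{clm:hellytriangular} in place of the full Helly Property. The argument proceeds by induction on $\area(D_\beta)$.

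For the base case $\area(D_\beta)=1$, the diagram $D_\beta=S$ is a single $2$-cell whose outerpath is $\beta$. Following the structure of Claim~\ref{clm:final}, we consider the simplex $\sigma$ of $\mathbf{N}(\mathcal{U}^o)$ that is least in the Lusin--Sierpi\'nski ordering $\varphi^o_\nu$ and contains $v_{k_0}$, together with the boundaries $X_{v_{j_1}}<\ldots<X_{v_{j_m}}<X_{v_{k_0}}$ of the $2$-cells in $A_\beta$. The crucial observation is that, because we work inside a fixed component $V^o_\nu$, every pairwise intersection of elements of $\mathcal{U}$ indexed by vertices of $\sigma$ is a single edge by Lemma~\ref{t5piecesedges} and Lemma~\ref{lem:c3t6cont}, and hence Lemma~\ref{clm:hellytriangular} yields that the total intersection is either a single edge or a single vertex, in either case contractible. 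This lets us construct a reduced disc diagram $E$ whose $2$-cells have boundaries in $\bigcup_{v \in \sigma} X_v$ and whose boundary path is $\partial_{out}A_\beta$; the $C(3)$ condition then forces either $\partial S=\partial E$ (so $E$ is a single cell and $\partial S<X_{v_{k_0}}$), or a small-cancellation violation in $E \cup A_\beta$. The case where some $v_{j_i}\notin \sigma$ is handled exactly as in Claim~\ref{clm:final} by extending $\sigma$ to a larger simplex $\theta^+$ containing the offending vertex, and repeating the argument.

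For the inductive step with $\area(D_\beta)=N$, we apply Theorem~\ref{thm:greendlingerc3t6}: after removing any spurs along $\beta$, the diagram $D_\beta$ is either a single cell (handled by the base case), a ladder, or has at least three spurs, $1$-shells, or pairs of $2$-shells sharing an edge incident to a boundary vertex. Following the strategy of Claim~\ref{clm:final}, we remove one such feature (a $1$-shell $S_1$, or a pair of $2$-shells $\{S_1,S_1'\}$) from $D_\beta$ to produce a smaller diagram $D_{\beta'}$ collared by an annular diagram $A_{\beta'}$. The induction hypothesis applied to $D_{\beta'}$ yields $\partial S_i<X_{v_{M'}}$ for every remaining shell $S_i$, where $X_{v_{M'}}$ is the maximum element in the ordering corresponding to a $2$-cell in $A_{\beta'}$. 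Repeating this construction with a different removed feature gives an analogous diagram $D_{\beta''}$ with bound $X_{v_{M''}}$, and comparing the two would-be maxima yields a contradiction if either equals the boundary of one of the cells removed in the other construction. This forces $\partial S_i<X_{v_{k_0}}$ for every shell $S_i$, completing the induction.

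The main obstacle lies in the inductive step, particularly in adapting the removal procedure to pairs of $2$-shells sharing an edge: we must verify that excising both cells of such a pair produces a legitimate collared disc diagram of strictly smaller area, and that the ``remove a different shell'' trick of Claim~\ref{clm:final} continues to rule out $X_{v_{M'}}$ being equal to $\partial S_1$ or $\partial S_1'$. A secondary subtlety is that the confinement to a fixed component $V^o_\nu$ is essential, because the general Helly property fails in $C(3)-T(6)$ (cf.\ Figure~\ref{fig:triforce}); only inside $V^o_\nu$ do pairwise intersections become single edges, making Lemma~\ref{clm:hellytriangular} applicable at the key step where contractibility of $\bigcap_{v\in \sigma} X_v$ is needed to build the auxiliary disc diagram $E$.
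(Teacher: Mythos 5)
There is a genuine gap at the heart of your base-case argument. You assert that ``the $C(3)$ condition then forces either $\partial S=\partial E$ \ldots or a small-cancellation violation in $E\cup A_\beta$,'' mirroring the step in Claim~\ref{clm:final} where a shell of $E$ (or of $D_\cup$) whose outerpath is a single piece contradicts $C(6)$. In the $C(3)$--$T(6)$ setting this step simply does not go through: a $2$-shell whose outerpath is a single piece has boundary a concatenation of three pieces, which is perfectly compatible with $C(3)$, so no small-cancellation violation is produced. This is exactly the point the paper's proof singles out and repairs differently: by the $C(3)$--$T(6)$ Greendlinger's Lemma (Theorem~\ref{thm:greendlingerc3t6}), if the auxiliary diagram does not contain at least two $1$-shells then it contains at least two pairs of $2$-shells sharing an edge incident to a boundary vertex, and such a pair enclosed by $\partial S$ creates an internal vertex of too small degree, contradicting the $T(6)$ condition. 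Without this use of $T(6)$ (rather than $C(3)$) your contradiction evaporates, and the ``either $\partial S=\partial E$ or violation'' dichotomy is unjustified.

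Two secondary points. First, you claim that within the simplex $\sigma$ all pairwise intersections are single edges, so that Lemma~\ref{clm:hellytriangular} applies; but pairwise intersections can also be single vertices, and the paper handles that case with Lemma~\ref{lem:unth} (which you do not invoke) rather than by the triangular Helly lemma. Second, you correctly flag as ``the main obstacle'' the adaptation of the shell-removal step in the induction to pairs of $2$-shells sharing an edge, but you never resolve it --- verifying that excising such a pair yields a legitimate collared diagram and that the ``remove a different shell'' comparison still rules out the removed cells being the maximum is left open, so the inductive step is incomplete as written.
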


\begin{myprooft}
The proof is analogous to that of Claim~\ref{clm:final}, except that we substitute Lemma~\ref{lem:c6cont}, Lemma~\ref{clm:helly},  Lemma~\ref{clm:maxints}, and Lemma~\ref{lem:hellyimpliesniceunion} with their $C(3)-T(6)$ analogues -- Lemma~\ref{lem:c3t6cont}, Lemma~\ref{clm:hellytriangular},  Lemma~\ref{lem:unth}, and Lemma~\ref{lem:hellyimpliesniceunion3}. There is a difference in a key small-cancellation point of the argument: the outerpath of a shell can be a single piece in the case of a $2$-shell. But by Theorem~\ref{thm:cornersc3t6}, if we do not have at least two $1$-shells, then there are at least two pairs of $2$-shells sharing an edge incident to a boundary vertex. Such pair of $2$-shells cannot be bounded by $\partial S$, as this would contradict the $T(6)$ condition.
\end{myprooft}

We can now conclude:

\begin{lemma}\label{clm:contractibleinduction3} Let $\mathcal{X}(P)$ be the presentation complex associated to a $C(3)-T(6)$ small-cancellation presentation $P$. Fix a $\nu$ and let $V^o_\nu=\{v_j\}_{j \in \naturals}$. For each $v_j \in V^o_\nu$, let $X_{v_j}$ be the corresponding element of $\mathcal{U}$. For each $k \in \naturals$, the intersection
$$\bigcup_{j < k} X_{v_j} \cap X_{v_k}$$ is contractible.
\end{lemma}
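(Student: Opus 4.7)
The plan is to mirror the inductive architecture of Lemma~\ref{clm:contractibleinduction} and Lemma~\ref{clm:contractibleinductionc4t4}, replacing each ingredient by its $C(3)$--$T(6)$ counterpart. I would proceed by induction on $k$, with the base cases $k=0,1$ handled exactly as in the $C(6)$ case: a single $X_{v_j}$ is automatically homotopy equivalent to itself, and for two elements the intersection $X_{v_0}\cap X_{v_1}$ is contractible by Lemma~\ref{lem:c3t6cont} combined with Lemma~\ref{t5piecesedges}, which forces the intersection to be at most a single edge or a single vertex.

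For the inductive step, assuming the homotopy equivalence holds for all $k<k_0$, I would show that $\bigcup_{j<k_0} X_{v_j}\cap X_{v_{k_0}}$ is contractible by ruling out the two obstructions: a non-simply-connected intersection and a disconnected one. The non-simply-connected case is the only one that does not rely on small-cancellation: it is handled exactly as in Lemma~\ref{clm:contractibleinduction} by a Seifert--Van Kampen argument, which reduces it to the disconnected case and thus to the induction hypothesis. For the disconnected case I would pick vertices $x,y$ in distinct components of the intersection, connect them by a path $\tau\rightarrow \bigcup_{j\leq k_0} X_{v_j}$ (which exists by the induction hypothesis), and form the annular diagram $A_\tau$ collaring a reduced disc diagram $D_\tau$, all chosen so that $|I_\tau|$ and $\area(D_\tau)$ are simultaneously minimised. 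The case $\area(D_\tau)=0$ is straightforward, as $D_\tau$ is then a (possibly degenerate) subpath of $X_{v_{k_0}}$, contradicting the assumption that $x,y$ lie in distinct components.

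The substantive case is $\area(D_\tau)\geq 1$. Here I would apply Theorem~\ref{thm:cornersc3t6} to $D_\tau$ (after shortening $\tau$ to remove spurs on $\partial D_\tau$) to extract a cell $S$ that is either a $1$-shell with at most one boundary vertex of degree more than $6$ in $D_\tau$, or a $2$-shell all of whose boundary vertices have degree at most $5$ in $D_\tau$. By Claim~\ref{clm:finalc3t6}, $\partial S < X_{v_{k_0}}$ in the ordering $\varphi^o_\nu$. The final step is to show that $S$ meets at least $3$ consecutive cells $C_1,C_2,C_3$ of $A_\tau$, so that pushing $\tau$ across $C_2$ produces a new path $\tau'$ traversing no more cells than $\tau$ yet bounding a disc diagram of strictly smaller area, contradicting the minimality of $(|I_\tau|,\area(D_\tau))$.

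The main obstacle is precisely establishing this adjacency count, which is weaker than in the earlier cases since $C(3)$ alone gives only two pieces on the outerpath of $S$. The argument I would pursue combines the piece decomposition forced by $C(3)$ with the $T(6)$ condition applied to $D_\tau\cup A_\tau$: every internal vertex of the outerpath of $S$ is internal in $D_\tau\cup A_\tau$ and therefore must have total degree at least $6$. Theorem~\ref{thm:cornersc3t6} then forces these vertices (with at most one exception in the $1$-shell case, and none in the $2$-shell case) to gain new edges from $A_\tau$, which by Lemma~\ref{t5piecesedges} means they must be shared vertices between consecutive cells of $A_\tau$. This splits the outerpath of $S$ into segments lying on at least three distinct, consecutive cells of $A_\tau$, enabling the push-across. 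Once this is in place, the induction closes exactly as in the proof of Lemma~\ref{clm:contractibleinductionc4t4}.
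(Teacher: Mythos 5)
Your proposal is correct and follows essentially the same route as the paper's proof: induct via the $C(4)$--$T(4)$ scheme, use Theorem~\ref{thm:cornersc3t6} and Claim~\ref{clm:finalc3t6}, and establish that the distinguished cell $S$ meets at least three consecutive cells of $A_\tau$ by combining the $C(3)$ piece count with $T(6)$ degree constraints before pushing across $C_2$. The only small omission is the case where $D_\tau$ is a single cell, which Theorem~\ref{thm:cornersc3t6} also allows and which the paper handles directly by noting that the $C(3)$ condition alone forces intersections with three consecutive cells of $A_\tau$.
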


\begin{myprooft}
    This proof follows the scheme of the proof in the $C(4)$--$T(4)$ case. Again we prove that we have a cell $S$ that intersects at least $3$ consecutive cells $C_1,C_2,C_3$ of $A_\tau$. 
    By Theorem \ref{thm:cornersc3t6}, there is a cell $S$ in $D_\tau$ that is either the only cell of a $D_\tau$, or a 1-shell touching a vertex of degree at most $5$, or a $2$-shell touching two boundary vertices of degree at most $6$. If $S$ is the only cell, the fact that $S$ intersects  at least three consecutive shells follows from the  $C(3)$ condition; in the case of a $1$-shell, intersection with two  of the consecutive shells  follows from the $C(3)$ condition, and intersection with the third one from the $T(6)$ condition; in the case of a $2$-shell, it follows from the $T(6)$ condition and the fact that such a shell has at least one boundary edge.
\end{myprooft}

By Remark~\ref{rmk:orders comp}, distinct connected components of $\hat B_n^o$ intersect in a single vertex; we immediately deduce an analogue to Lemmas~\ref{clm:contractibleinduction} and~\ref{clm:contractibleinductionc4t4}: 

\begin{corollary}\label{cor:contractibleinduction3} Let $\mathcal{X}(P)$ be the presentation complex associated to a $C(3)-T(6)$ small-cancellation presentation $P$. Let $V=\{v_j\}_{j \in \naturals}$ be the vertices of its structure graph $\Lambda$. For each $v_j \in V$, let $X_{v_j}$ be the corresponding element of $\mathcal{U}$. For each $k \in \naturals$, the intersection
$$\bigcup_{j < k} X_{v_j} \cap X_{v_k}$$ is contractible.
\end{corollary}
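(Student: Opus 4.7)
The plan is to globalise Lemma~\ref{clm:contractibleinduction3} using the block--cut-vertex structure of $\hat B_n$. At each stage $k$, I would write $U_k := \bigcup_{j \leq k} X_{v_j}$ as the union $U_k = \bigcup_\nu U_{k,\nu}$, where $U_{k,\nu} := \bigcup_{v \in V^o_\nu,\, \varphi(v) \leq k} X_v$ is the contribution coming from the component $V^o_\nu$.

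First, since $\varphi$ was constructed as a linear extension of the individual orders $\varphi^o_\nu$, for each $\nu$ the set $\{v \in V^o_\nu : \varphi(v) \leq k\}$ is an initial segment of $\varphi^o_\nu$. Hence Lemma~\ref{clm:contractibleinduction3} applies inside each component to give
$$U_{k,\nu} \;\sim\; \bigvee_{v \in V^o_\nu,\, \varphi(v) \leq k} X_v.$$
Second, by Remark~\ref{rmk:orders comp} and Lemma~\ref{lem:unth}, distinct $U_{k,\nu}$ and $U_{k,\nu'}$ (with $\nu \neq \nu'$) can only meet in a discrete set of cut vertices of $\hat B_n$; and since the block--cut-vertex graph of any graph is a tree, the adjacency pattern of the $U_{k,\nu}$'s inside $U_k$ is itself tree-like.

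Combining these, $U_k$ is an iterated point-union along a tree of the subcomplexes $U_{k,\nu}$, each of which is homotopy equivalent to a wedge of its $X_v$'s. Since wedging at a single point preserves the wedge structure, we obtain
\[
U_k \;\sim\; \bigvee_{\nu} U_{k,\nu} \;\sim\; \bigvee_{\nu} \bigvee_{v \in V^o_\nu,\, \varphi(v) \leq k} X_v \;=\; \bigvee_{j \leq k} X_{v_j},
\]
which is the desired conclusion.

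The main structural input is the tree-like pattern of block intersections; this is a standard graph-theoretic fact and is the reason the argument goes through immediately once Lemma~\ref{clm:contractibleinduction3} is in place for each component. I expect the only subtlety to be formalising the iterated-wedge step $U_k \sim \bigvee_\nu U_{k,\nu}$ rather than any new small-cancellation content: at each finite stage $k$, only finitely many $U_{k,\nu}$ appear, and they are glued in a finite tree pattern at single vertices, so the wedge decomposition follows by a straightforward induction on the number of active blocks.
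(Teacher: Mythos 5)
Your argument is correct and is essentially the paper's own deduction: the paper likewise obtains the corollary by applying Lemma~\ref{clm:contractibleinduction3} within each component of $\hat B_n^o$ and then assembling across components via Remark~\ref{rmk:orders comp}, since distinct components meet only in single (cut) vertices, so the componentwise wedge decompositions combine into the global one. Your extra observations --- that $\{v \in V^o_\nu : \varphi(v)\leq k\}$ is an initial segment of $\varphi^o_\nu$ because $\varphi$ is a linear extension, and that the gluing pattern of the blocks is tree-like --- just make explicit what the paper leaves implicit.
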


Reasoning in exactly the same way as in the proof of Theorem~\ref{thm:CLC6}, the $C(3)-T(6)$ version of Theorem~\ref{thm:CLC6intro} now follows.

\bibliographystyle{alpha}
\bibliography{bib9.bib}
\end{document}